\documentclass[reqno,12pt,letterpaper]{amsart}
\usepackage{amsmath,amssymb,amsthm,graphicx,mathrsfs,url}
\usepackage[usenames,dvipsnames]{color}
\usepackage[colorlinks=true,linkcolor=Red,citecolor=Green]{hyperref}
\usepackage{amsxtra}

\setlength{\marginparwidth}{0.6in}

\def\?[#1]{\textbf{[#1]}\marginpar{\Large{\textbf{??}}}}

\def\smallsection#1{\smallskip\noindent\textbf{#1}.}
\let\epsilon=\varepsilon 

\setlength{\textheight}{8.50in} \setlength{\oddsidemargin}{0.00in}
\setlength{\evensidemargin}{0.00in} \setlength{\textwidth}{6.08in}
\setlength{\topmargin}{0.00in} \setlength{\headheight}{0.18in}
\setlength{\marginparwidth}{1.0in}
\setlength{\abovedisplayskip}{0.2in}
\setlength{\belowdisplayskip}{0.2in}
\setlength{\parskip}{0.05in}

\newcommand{\RR}{{\mathbb R}}
\newcommand{\TT}{{\mathbb T}} 

\newcommand{\SP }{{\mathbb S}}
\newcommand{\CC}{{\mathbb C}}
\newcommand{\ZZ}{{\mathbb Z}}

\DeclareGraphicsRule{*}{mps}{*}{}

\newtheorem{theo}{Theorem}
\newtheorem{prop}{Proposition}[section]

\newtheorem{lemm}[prop]{Lemma}

\numberwithin{equation}{section}

\DeclareMathOperator{\Div}{div}

\DeclareMathOperator{\Spec}{Spec}

\DeclareMathOperator{\Ell}{ell}
\DeclareMathOperator{\Hol}{Hol}

\let\Im=\Imag

\let\Re=\Real

\DeclareMathOperator{\supp}{supp}
\DeclareMathOperator{\vol}{vol}
\DeclareMathOperator{\WF}{WF}
\DeclareMathOperator{\tr}{tr}

\def\WFh{\WF_h}

\title{Stochastic stability of Pollicott--Ruelle resonances}
\author{Semyon Dyatlov}
\email{dyatlov@math.mit.edu}
\address{Department of Mathematics, Massachusetts Institute of Technology,
Cambridge, MA 02139, USA}
\author{Maciej Zworski}
\email{zworski@math.berkeley.edu}
\address{Department of Mathematics, University of California,
Berkeley, CA 94720, USA}

\begin{document}

\maketitle

\begin{abstract}
Pollicott--Ruelle resonances for chaotic flows are the characteristic frequencies of correlations.
They are typically defined as eigenvalues of the generator of the flow acting on specially
designed functional spaces. We show that these resonances can be computed as viscosity limits
of eigenvalues of second order elliptic operators. These eigenvalues are the characteristic frequencies
of correlations for a stochastically perturbed flow. 
\end{abstract}

\section{Introduction and statement of results}

We consider an Anosov flow $\varphi_t=e^{tV}$ on a 
compact manifold $ X $. For the Laplacian $\Delta_g \leq 0$ with respect to some
metric on $X$, we define 
\begin{equation}
\label{eq:Peps}
P_\varepsilon={1\over i}V+i\varepsilon\Delta_g,
\end{equation}
For $ \epsilon \neq 0 $ this operator is elliptic and hence has a discrete 
$L^2 ( X ) $-spectrum $ \{ \lambda_j ( \epsilon ) \}_{j=0}^\infty $.
However, for $\epsilon=0$ most of the $L^2$ spectrum is not discrete.

Following the seminal work of Ruelle \cite{Rue}
and Pollicott \cite{Pol}, many authors investigated the discrete
spectrum of $ P_0 $ acting on specially designed \emph{anisotropic
Sobolev spaces} and the role of that spectrum in the expansion of
correlations~-- see Blank--Keller--Liverani~\cite{bkl}, Baladi--Tsujii~\cite{b-t},
Faure--Sj\"ostrand~\cite{fa-sj}, Faure--Tsujii \cite{fa-ts,fa-ts2}, 
Gou\"ezel--Liverani~\cite{g-l}, Liverani~\cite{liv2}, Tsujii~\cite{Tsu10,Ts}
and references given there. 
We review a yet another approach based on~\cite{dz} in~\S\ref{s:defPR}.
These complex eigenvalues of $P_0$, $ \{ \lambda_j \}_{j=0}^\infty $, are called 
{\em Pollicott--Ruelle resonances}. 
For perspectives on physical manifestations of these resonances see
for instance Gaspard--Ramirez~\cite{gara} or Chekroun et al \cite{cheka}.

The purpose of this note is to show that Pollicott--Ruelle resonances
can be defined as limits of $ \lambda_j ( \epsilon )$ as $ \epsilon \to 0+$.
This can be considered a {\em stochastic stability} of resonances:

\begin{theo}
\label{th:1}
Let $ P_\epsilon $ be given by \eqref{eq:Peps} and let 
$\{\lambda_j ( \epsilon ) \}_{j=0}^\infty $ be the set of its $ L^2$-eigenvalues.
If $ \{ \lambda_j \}_{j=0}^\infty $
is the set of the Pollicott--Ruelle resonances of the flow $ \varphi_t $, then
\[  \lambda_j ( \epsilon ) \longrightarrow \lambda_j , \ \ \epsilon \to 0+, \]
with convergence uniform for $ \lambda_j $ in a compact set.
\end{theo}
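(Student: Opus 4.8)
The plan is to place both $P_0$ and the whole family $P_\varepsilon$ on a single anisotropic Sobolev space $\mathcal H=\mathcal H_{sG}$ of the type of \cite{fa-sj,dz}: one fixes an escape (order) function $G$ on $T^*X$ for the Hamiltonian flow of $p=\xi(V)$, logarithmically growing, positive near the stable conormal $E_s^*$, negative near the unstable conormal $E_u^*$, and with $H_pG$ of a definite favourable sign outside a conic neighbourhood of $E_s^*\cup E_u^*\cup 0$, and sets $\mathcal H=\mathcal H_{sG}$, the corresponding anisotropic space. On $\mathcal H$, for $s$ large in terms of a given $A>0$, the operator $P_0-\lambda$ is Fredholm of index $0$ for $\Im\lambda>-A$, $(P_0-\lambda)^{-1}$ is meromorphic there, and its poles are the Pollicott--Ruelle resonances, with resonant states independent of $s$; this is the content of \cite{dz} recalled in \S\ref{s:defPR}. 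For $\varepsilon>0$ the operator $P_\varepsilon$ is elliptic, so any (generalized) eigenfunction of $P_\varepsilon$ on $\mathcal H$ is smooth, and the eigenvalues of $P_\varepsilon$ on $\mathcal H$, with their algebraic multiplicities, are precisely the $L^2$-eigenvalues $\lambda_j(\varepsilon)$. Thus the theorem reduces to: (i) a Fredholm estimate for $P_\varepsilon-\lambda$ on $\mathcal H$ uniform in $\varepsilon\in[0,\varepsilon_0]$ and in $\lambda$ on compact subsets of $\{\Im\lambda>-A\}$; (ii) convergence of the truncated resolvents; and (iii) upgrading (ii) to convergence of eigenvalues with multiplicity.

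The crux is (i): for $s$ large and $\lambda$ in the given region one needs
\[
\|u\|_{\mathcal H}\le C\,\|(P_\varepsilon-\lambda)u\|_{\mathcal H}+C\,\|u\|_{H^{-N}},\qquad\varepsilon\in[0,\varepsilon_0],
\]
and the analogous estimate for $P_\varepsilon^*$. This cannot be obtained by perturbing the $\varepsilon=0$ estimate, since $i\varepsilon\Delta_g$ is of order $2$ and is not relatively bounded with respect to $P_0$ on $\mathcal H$ (regularity along the flow does not control transverse regularity). Instead I would re-derive the $\varepsilon=0$ estimate with the extra term present throughout: the elliptic estimate where $P_0-\lambda$ is elliptic, the propagation estimate driven by $G$ outside the radial sets, and the radial-point estimates at the source $E_s^*$ (valid below a threshold, which is what forces $s$ large in terms of $A$) and the sink $E_u^*$. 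The decisive observation is that the principal symbol of $i\varepsilon\Delta_g$ is $-i\varepsilon|\xi|_g^2\le 0$, so in every positive-commutator and energy computation this term appears with the same sign as the absorbing terms being generated; hence it can only help, and all constants can be taken independent of $\varepsilon\in[0,\varepsilon_0]$. This is exactly where $\varepsilon\to 0^+$, rather than $\varepsilon\to 0$, is used. A minor point is that conjugating $i\varepsilon\Delta_g$ by the weight $G^w$ defining $\mathcal H$ produces $\varepsilon$ times a logarithmically-growing lower-order remainder, but this is absorbed using the ellipticity $-i\varepsilon|\xi|_g^2$ once $s$ is fixed. I expect (i) --- in particular the sign bookkeeping at the radial points --- to be the main obstacle.

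Given (i), $P_\varepsilon-\lambda$ is Fredholm of index $0$ on $\mathcal H$, uniformly, so $(P_\varepsilon-\lambda)^{-1}$ is meromorphic in $\{\Im\lambda>-A\}$ with poles at the $\lambda_j(\varepsilon)$. On any compact $K\subset\{\Im\lambda>-A\}$ free of resonances, $(P_\varepsilon-\lambda)^{-1}$ is uniformly bounded for $\varepsilon$ small: otherwise one gets $u_\varepsilon$, $\|u_\varepsilon\|_{\mathcal H}=1$, $\lambda_\varepsilon\to\lambda_*\in K$, $\|(P_\varepsilon-\lambda_\varepsilon)u_\varepsilon\|_{\mathcal H}\to 0$; by (i), $\|u_\varepsilon\|_{H^{-N}}\ge 1/C$, so (the embedding $\mathcal H\hookrightarrow H^{-N}$ being compact for $N$ large) along a subsequence $u_\varepsilon\to u$ in $H^{-N}$ and $u_\varepsilon\rightharpoonup u$ in $\mathcal H$ with $u\ne 0$; since $\varepsilon\Delta_g u_\varepsilon\to 0$ in $\mathcal D'$, $(P_0-\lambda_*)u=0$ with $u\in\mathcal H$, contradicting that $\lambda_*$ is not a resonance. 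The same compactness argument identifies $(P_\varepsilon-\lambda)^{-1}f\to(P_0-\lambda)^{-1}f$ for fixed $f$, locally uniformly in $\lambda$ away from resonances. Fixing a resonance $\lambda_0$ of multiplicity $m$ and a small disc $D\ni\lambda_0$ with $\partial D$ free of resonances, the spectral projectors $\Pi_\varepsilon=(2\pi i)^{-1}\oint_{\partial D}(\lambda-P_\varepsilon)^{-1}\,d\lambda$ are then uniformly bounded and converge to $\Pi_0$ in a sense strong enough for a standard spectral-perturbation (Gohberg--Sigal) argument to give $\rank\Pi_\varepsilon=\rank\Pi_0$ for $\varepsilon$ small; equivalently one may run a Grushin reduction near $\lambda_0$ built from fixed finite-rank operators formed with the resonant and co-resonant states, reducing the count to the convergence of a holomorphic family of $m\times m$ matrices and Rouch\'e's theorem. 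Either way, for $\varepsilon$ small $P_\varepsilon$ has exactly $m$ eigenvalues in $D$, with multiplicity; shrinking $D$ shows these converge to $\lambda_0$, and covering a given compact set by finitely many such discs (using the uniform resolvent bound on the complement) gives the uniformity claimed.
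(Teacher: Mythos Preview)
Your overall architecture is the same as the paper's: place $P_0$ and all $P_\varepsilon$ on a common anisotropic space, obtain a Fredholm picture uniform in $\varepsilon\in[0,\varepsilon_0]$, and then pass to eigenvalue convergence. The sign observation (that $-i\varepsilon|\xi|_g^2\le 0$ only helps in forward propagation) is exactly the point the paper uses. The endgame differs harmlessly: the paper packages the resolvent as $(I+K(\lambda,\varepsilon))^{-1}$ with $K$ trace class, shows $\varepsilon\mapsto K(\lambda,\varepsilon)$ is $C^\infty$ on $[0,\varepsilon_0]$ in trace norm (Lemmas~\ref{l:Keps},~\ref{l:dets}), and applies Rouch\'e to the Fredholm determinant $D_R(\lambda,\varepsilon)$; this yields more (smoothness of simple eigenvalues and of spectral projectors in $\varepsilon$) than your compactness argument, but your route is adequate for the theorem as stated.

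The substantive gap is in your step~(i), which you call a ``minor point.'' The operator $hP_\varepsilon$ is \emph{not} uniformly in $\Psi^1_h$: its $h$-symbol is $\xi(V_x)-i(\varepsilon/h)|\xi|_g^2$, and for $\varepsilon$ up to $h/C_0$ the damping term is genuinely second order with a coefficient that does not go to zero. The propagation and radial-point results of \cite{dz} that you invoke are stated and proved for first-order operators; they do not apply to $hP_\varepsilon$ as written, and the issue is not merely the conjugation remainder. Absorbing $\varepsilon\cdot O(\langle\xi\rangle^{1+})$ into $\varepsilon|\xi|_g^2$ works for $|\xi|$ large, but in the bounded-$|\xi|$ region where propagation actually takes place the damping $\varepsilon|\xi|_g^2$ is small and absorbs nothing; meanwhile the order-$2$ term destroys the symbol calculus underlying the positive-commutator proofs at the stated order.

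The paper's fix is a two-scale decomposition, which is the real content of \S\ref{s:mimoa}. First (Lemma~\ref{l:ell}) an \emph{$\varepsilon$-semiclassical} elliptic estimate controls $(1-\chi_1(-\varepsilon^2\Delta_g))u$ by $(P_\varepsilon-\lambda)u$, using that $\varepsilon P_\varepsilon$ is elliptic in $\Psi^2_\varepsilon$ away from $\xi=0$; this requires introducing an $\varepsilon$-adapted norm $\|\cdot\|_{s,\varepsilon}$ equivalent to $\|\cdot\|_{H_{sG}}$ on the relevant frequencies. Second (Lemma~\ref{l:rad}) one replaces $hP_\varepsilon-iQ-h\lambda$ by
\[
\widetilde P_\varepsilon(\lambda)=\tfrac{h}{i}V+i\varepsilon h\Delta_g\,\chi_1(-\varepsilon^2\Delta_g)-iQ-h\lambda,
\]
whose $h$-symbol $\xi(V_x)-i|\xi|_g\cdot\big((\varepsilon/h)|\xi|_g\,\chi_1((\varepsilon/h)^2|\xi|_g^2)\big)-i\chi(|\xi|_g^2)$ lies \emph{uniformly} in $S^1$ because $t\chi_1(t^2)$ is bounded. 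Now the \cite{dz} machinery applies verbatim (with a small adjustment, noted after~\eqref{eq:pses}, for the $\mathcal O(h)_{S^{1/2}}$ real perturbation of $p$). Combining the two pieces via~\eqref{e:houston}--\eqref{eq:houston} gives Proposition~\ref{p:Qeps}. If you want to carry out your direct approach instead, you would have to reprove the source/sink/propagation estimates for operators whose real part is first order and whose imaginary part is second order with favorable sign; this is doable but is precisely the work you are skipping, not a minor bookkeeping issue.
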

%
\begin{figure}
\includegraphics[scale=0.9]{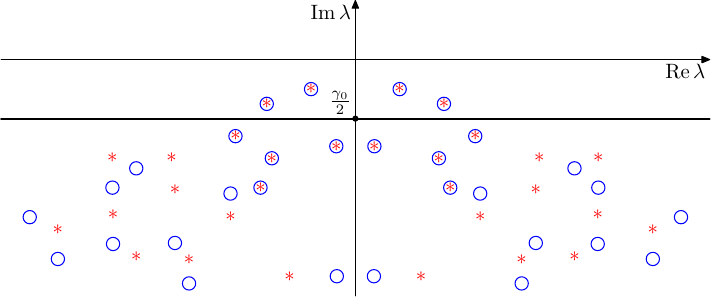}
\caption{A schematic presentation of the results in Theorems \ref{th:1}
and \ref{th:2}. Pollicott--Ruelle resonances of the generator of
the flow $ V $ (denoted by red asterisks) are approximated by 
the eigenvalues of $ V/i + i \epsilon \Delta_g $ (denoted by blue circles)
uniformly on compact
sets. The asymptotic resonance free strip is uniform with respect to 
$ \epsilon $.}
\label{f:1}
\end{figure}
The nature of convergence is much more precise 
-- see \S \ref{s:stPR}.
In particular the spectral projections depend smoothly on $ \epsilon
\in [0, \epsilon_0] $ where $ \epsilon_0 $ depends on the compact set.
Also, when 
$ \lambda_j $ is a simple resonance then for $ \epsilon $ sufficiently 
small the map $ \epsilon \mapsto \lambda_j (\epsilon )  $ is smooth all the way up to $\varepsilon=0$.
As explained in the next paragraph
 $ \lambda_j ( \epsilon ) \to \bar \lambda_j $ when $ \epsilon \to 
0 - $. We also note the symmetry of $ \lambda_j ( \epsilon )$'s with 
respect to the imaginary axis (see Fig.~\ref{f:1}). That follows from 
the fact 
that $\overline{P_\epsilon u}=-P_\epsilon\bar u$ and thus
\begin{equation}
\label{eq:reality}
\overline{(P_\epsilon-\lambda)^{-1}u}=-(P_\epsilon+\bar\lambda)^{-1}\bar u.
\end{equation}

The proof of Theorem~\ref{th:1} relies on the fact that $P_\epsilon-\lambda$ is a Fredholm operator
on the same anisotropic Sobolev spaces as $P_0-\lambda$, in a way which is controlled uniformly
as $\epsilon\to 0+$. This Fredholm property is established by the same methods
as those used in~\cite{dz} for the case of $\epsilon=0$. The key feature of the damping
term $i\varepsilon\Delta_g$ is that its imaginary part is nonpositive and thus
the propagation of singularities theorem of Duistermaat--H\"ormander
(see~\eqref{e:hyperbolic-est})
still applies in the forward time direction. For $\varepsilon<0$, the damping term
is nonnegative and propagation of singularities applies in the negative time direction,
which means that we have to consider the dual anisotropic Sobolev spaces
$H_{-sG(h)}$ and the spectrum of $P_0$ on these spaces is given by $\{\bar\lambda_j\}$.

We remark that all the results of this paper are valid for 
the operators acting on sections of vector bundles arising in dynamical 
systems~-- see \cite{dz}. We consider the scalar case to make the 
notation, which is all that is affected, simpler.

Previously, stability of Pollicott--Ruelle resonances has been 
established for Anosov maps, $ f : \TT^d  \to \TT^d $, \cite{bkl},\cite{liv2},
 following a very general argument of Keller--Liverani \cite{kl}. In that case the Koopman 
operator $ f^* :  C^\infty ( \TT^d ) \to C^\infty ( \TT^d ) $ is replaced
by a ``noisy propagator" $ G_\epsilon \circ f^* $, where $ G_\epsilon u 
= g_\epsilon * u $, $ g_\epsilon \to \delta_0 $, $ \epsilon \to 0 $. 
For general Anosov maps on compact manifolds a semiclassical proof 
was given by Faure--Roy--Sj\"ostrand \cite[Theorem 5]{frs}.
Further refinements concerning dependence on $ \epsilon $ can be
found in \cite[\S 8]{g-l} and interesting applications were 
obtained by  Gou\"ezel--Liverani \cite{g-l2} and 
Fannjiang--Nonnenmacher--Wo{\l}owski~\cite{fnw}. For a physics 
perspective on this see for instance Blum--Agam~\cite{bl} and
Venegeroles~\cite{Ve}.

For flows, Butterley--Liverani \cite{buli},\cite{buli2} showed 
that if a vector field depends smoothly on a parameter, then the spectrum of the transfer operator associated to the weight corresponding to the SRB measure
is smooth in that parameter.
Constantin--Kiselev--Ryzhik--Zlato\v s~\cite{GKRZ}
established that solutions to the heat equation with a large transport term
equidistribute after arbitrarily small times if and only if
the flow corresponding to the transport term is mixing; this can be viewed
as an analogue of our work for the $L^2$ spectrum on the real line instead of resonances.

A dynamical interpretation of $ \lambda_j (\epsilon )$'s can be
formulated as follows. In terms of the operator $ P_0 $ the flow,
$ x ( t ) := \varphi_{-t} ( x ( 0 ) ) $, is given by
\[  e^{ - i t P_0 } f ( x) = f (x (  t)  ) , \ \ 
\dot x ( t ) = - V_{ x( t ) }, \quad x(0)=x.  \]
For $ \epsilon > 0 $ the evolution equation is replaced by the
Langevin equation:
\[  e^{ - i t P_\epsilon  } f ( x ) = \mathbb E \left[ f (x (  t)  ) \right] , \ \ 
\dot x ( t ) = - V_{ x( t ) } + \sqrt{ 2 \epsilon } \dot B( t )  , \quad x(0)=x,\]
where $ B ( t ) $ is the Brownian motion corresponding to the metric $ g$ on $X$
(presented here in an informal way; see \cite{brown}). This explains why considering $ P_\epsilon $ corresponds
to a stochastic perturbation of the deterministic flow. See also Kifer \cite{ki}
for other perspective on random perturbations of dynamical systems. 

We also remark that a result similar to Theorem \ref{th:1} is valid for 
{\em scattering resonances}: for $ V \in L^\infty_{\rm{c}} ( \RR^n ;
\CC ) $ (and in greater generality) they appear as limits of eigenvalues
of $ - \Delta + V( x ) - i \epsilon |x|^2 $ when $ \epsilon \to 0 + $, see
\cite{resvis}. The proof is based on the method of complex scaling 
and is technically very different than the one presented here. The result
however is exactly analogous with spacial infinity, $ |x| \to \infty $,
replacing the momentum infinity, $ |\xi| \to \infty $.

\begin{center}
\begin{figure}
\includegraphics[width=3in]{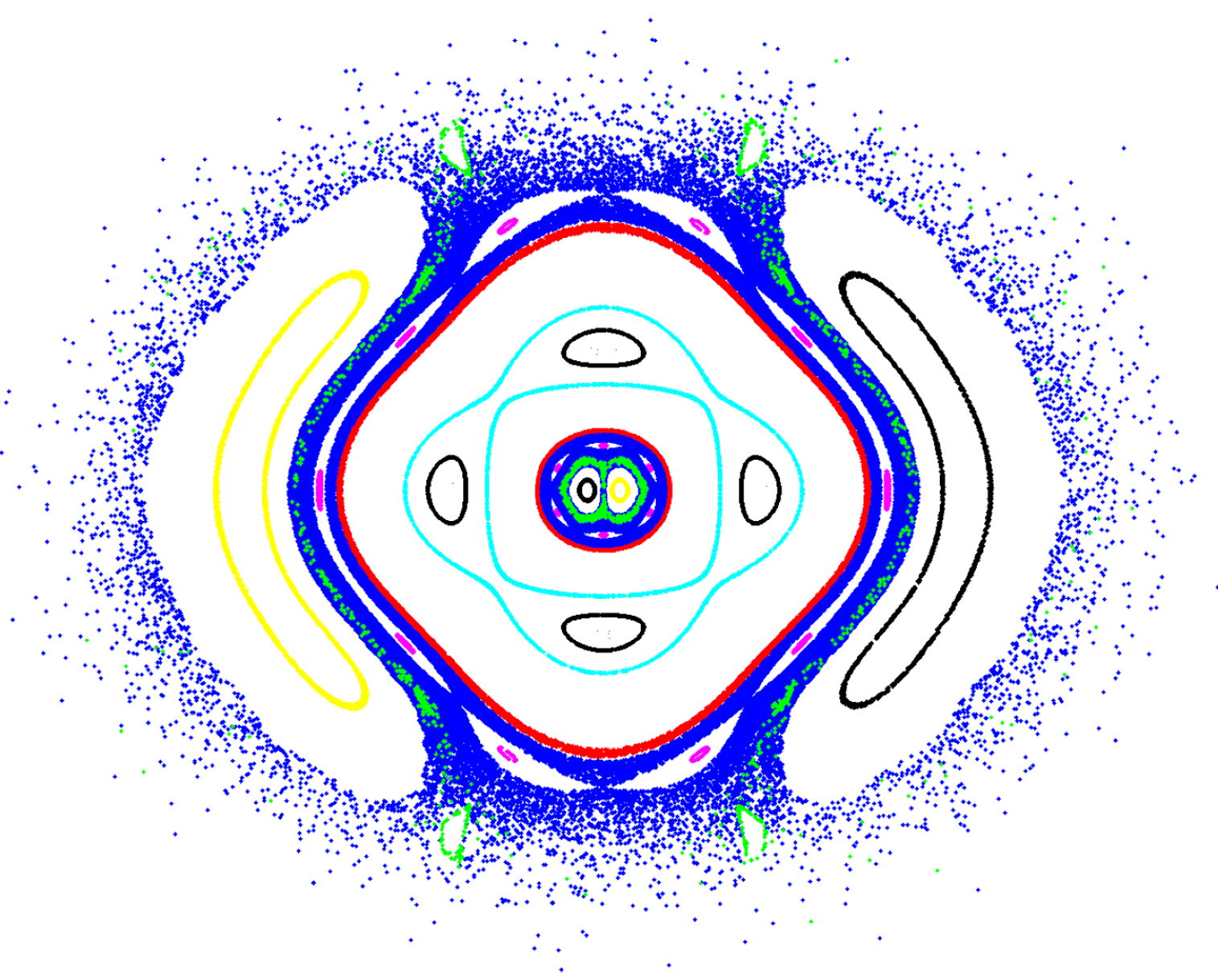} \hspace{0.2in} 
\includegraphics[width=2.5in]{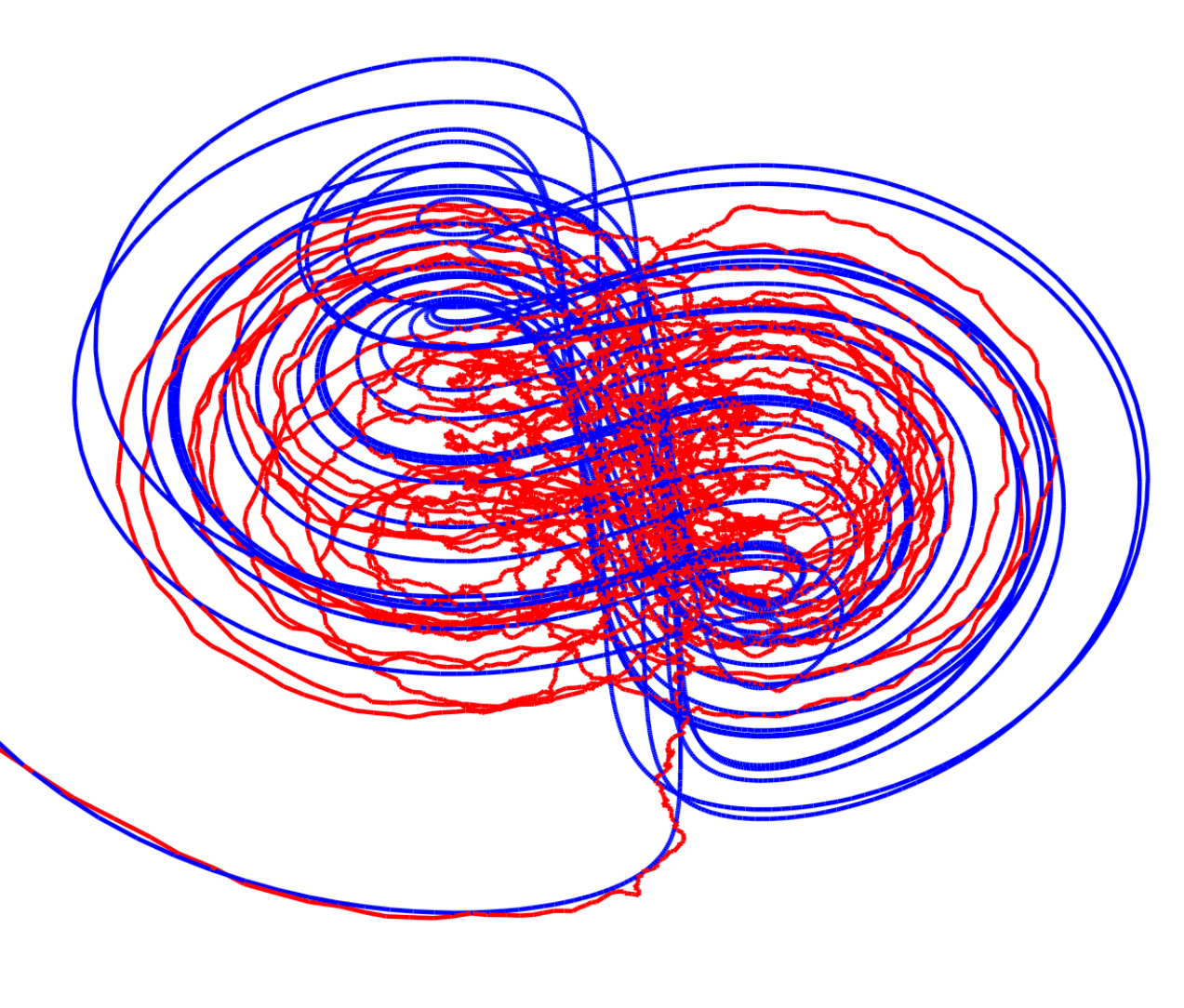} 
\caption{An illustration of
chaotic and stochastic trajectories: we consider
the Nos\'e--Hoover oscillator \cite{NoHo} which is possibly the simplest chaotic 
system:
$ W = x_2 \partial_{x_1} + ( -x_1 + x_2 x_3) \partial_{x_2} + ( 1 - x_2^2) \partial_{x_3} $, $
x \in \RR^3 $. The vector field $ V = e^{|x|^2/2} W $ is the Reeb 
vector field for the contact form $ \alpha = e^{-|x|^2/2} ( x_2 d x_1 + dx_3 ) $.
On the left the Poincar\'e section $ \{x_3 = 0\} $ showing
the chaotic sea and islands of quasi-periodicity (each colour corresponds
to a {\em numerical} iteration of a single point).  On the right 
a chaotic trajectory and the stochastic trajectory, $ \epsilon = 0.01 $,
with the same
initial data. We stress that the results of our paper do not apply 
to mixed systems and this example is meant as an illustration of
chaotic and stochastic trajectories. However, as in \cite{cheka},
Pollicott--Ruelle resonances are expected to be relevant for mixed systems
as well.}
\label{f:chaos}
\end{figure}
\end{center}

Pretending that spectrum of $ P_\epsilon $ is semisimple 
(algebraic multiplicities are equal to geometric multiplicities -- see \S \ref{s:stPR} for 
the general statement), the relation to the eigenvalues $ \lambda_j ( \epsilon ) $ 
comes from considering long time behaviour:
for any $ f \in C^\infty ( X ) $, and $ t > 0 $, 
\begin{equation}
\label{eq:t4}
\begin{split}  e^{ - i t P_\epsilon } f ( x) & =  
 \sum_{ \Im \lambda_j > - A }  
e^{ - i t \lambda_{j} ( \epsilon ) } u_{j}^\epsilon ( x ) \int_X v _{j}^\epsilon ( y)
 f ( y) 
d \!\vol_g (y) + 
\mathcal O_{f} ( e^{ - t A } )_{C^\infty } ,\end{split}
\end{equation}
where $ u_{j}^\epsilon, v_j^\epsilon \in C^\infty ( X) $ are the eigenfunctions of
$ P_\epsilon $ and $ P_\epsilon^t $ corresponding to $ \lambda_j ( \epsilon) $. 
We note that there are
no convergence problems as the number of $ \lambda_j ( \epsilon ) $
with imaginary parts above $ - A $ is finite though
the number will grow with $ \epsilon $. In fact, \cite{Jin} shows
that the number of Pollicott--Ruelle resonances, $ \lambda_j $, with 
$ \Im \lambda_j > - A $ is always infinite if $ A $ is sufficiently large.

The validity of a modification of \eqref{eq:t4} for $ \epsilon = 0 $ is only known
for {\em contact Anosov flows} (see \S \ref{s:con}) and for 
$ A > - \gamma_0/2 $, where  $\gamma_0 $ is an averaged Lyapounov
exponent (see \eqref{eq:gammd}). That is due to 
Tsujii \cite{Tsu10,Ts} who followed earlier advances by 
Dolgopyat \cite{Dol} and Liverani \cite{Liv}. It is also a 
consequence of more general results obtained  in \cite{nz5}.  

The modification in \eqref{eq:t4} is needed since the corresponding
$ u_j$'s are now distributions and the expansion provides fine
aspects of the decay of correlations. Let $ d \mu ( x ) $ be
the volume form obtained from the contact form on $ X$, $ \mu ( X ) = 1 $.
 For $ f , g \in C^\infty ( X ) $
and any $ \delta > 0 $, 
\begin{equation}
\label{eq:t4c}
\begin{split} 
&   \int_X  \left[ e^{- i t P_0} f\right] (x)
g ( x )d \mu ( x ) 
 = 
 \int_X f ( x)   d\mu ( x) \, \int_X g ( x ) d\mu ( x) \\
 & \ \ \ \ \ \ \ \ \ \ \ \ 
   + \sum_{ - \frac12(\gamma_0 - \delta) 
   < \Im \lambda_j < 0 }   e^{ - i t \lambda_{j} } 
   v_{j} ( f ) u_{j} ( g ) + \mathcal O_{f,g} ( e^{ - \frac12t(        
  \gamma_0 - \delta ) } ), 
\end{split}
\end{equation}
where $ \gamma_0 $ is 
the minimal asymptotic growth rate of the unstable Jacobian, that is the largest constant
such that for each $\delta>0$
\begin{equation}
\label{eq:gammd}
|\det (d\varphi_{-t}|_{E_u(x)})|\leq C_\delta e^{-(\gamma_0-\delta)t},\ t\geq 0;\quad  
\varphi_{-t}^* = e^{ - i t P_0 } : C^\infty ( X ) \to C^\infty ( X ) ,
\end{equation}
with $ E_u ( x ) \subset T_x X $ the unstable subspace of the flow at
$ x $ -- see \S \ref{s:pr}. Now $ u_j $ and $ v_j $ are 
distributional eigenfunctions of $ P_0 $ and $ P_0^t $, 
$ \WF ( u_j ) \subset E_u^* $ and $ \WF ( \bar v_j ) \subset E_s^* $.
Here again we make the simplifying
assumption that the spectrum is semisimple; 
that is always the case for geodesic flows in constant 
negative curvature as shown by Dyatlov--Faure--Guillarmou \cite[Theorem~3]{DFG}.

Hence it is natural to ask the question if the gap $ \gamma_0/2 $ is
uniform with respect to $ \epsilon $, that is, if the expansion 
\eqref{eq:t4} with $ A > -\frac12 ( \gamma_0 - \epsilon ) $ 
uniformly approaches the expansion \eqref{eq:t4c}. That is
indeed a consequence of the next theorem:

\begin{theo}
\label{th:2}
Suppose that $ X $ is an odd dimensional compact manifold and
that $ V \in C^\infty ( X; TX ) $ generates a contact Anosov flow.
There exists a constant $ s_0 $ 
such that for any $ \delta > 0 $ there exist $ N_0,R > 0 $ such that for all
$ \epsilon > 0 $, 
\begin{equation}
\label{eq:t2}   ( P_\epsilon - \lambda)^{-1} = {\mathcal O}( \lambda^{N_0} )
 : H^{  s_0 } ( X ) \to H^{ - s_0  } ( X ) , 
\end{equation}
for $ \gamma_0 $ defined in \eqref{eq:gammd} and 
$ \lambda \in [ R , \infty ) - i [ 0 , \textstyle{\frac12} ( \gamma_0 - \delta ) ] $. 
\end{theo}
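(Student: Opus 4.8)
The plan is to recast \eqref{eq:t2} as a semiclassical resolvent bound and to reduce it, via the standard sandwiching of the anisotropic space between ordinary Sobolev spaces, to an estimate to which the analysis behind \eqref{eq:t4c} from \cite{Tsu10,Ts} (see also \cite{nz5}) applies with $i\varepsilon\Delta_g$ carried along as a lower order dissipative term. For $\lambda$ in the range of Theorem~\ref{th:2} put $h:=1/\Re\lambda$ and $z:=h\lambda$, so that $\Re z=1$, $\Im z\in[-\tfrac{h}{2}(\gamma_0-\delta),0]$, and
\[
P_\varepsilon-\lambda\;=\;h^{-1}\bigl(hP_\varepsilon-z\bigr),\qquad
hP_\varepsilon\;=\;\frac{1}{i}\,hV+i\,\frac{\varepsilon}{h}\,\bigl(h^2\Delta_g\bigr),
\]
so that the semiclassical principal symbol of $hP_\varepsilon$ is $p_\varepsilon=\langle\xi,V(x)\rangle-i\tfrac{\varepsilon}{h}|\xi|_g^2$, with $\Im p_\varepsilon\le0$ on all of $T^*X$. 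Let $H_{sG(h)}$ be the $h$--dependent anisotropic Sobolev space of \cite{dz,nz5} attached to the escape function adapted to the $H_{p_0}$--flow; for a suitable $s_0$, fixed by the escape function and independent of $h$ and $\varepsilon$, one has $H^{s_0}(X)\subset H_{sG(h)}\subset H^{-s_0}(X)$. Thus \eqref{eq:t2} follows once we prove
\[
\|(hP_\varepsilon-z)^{-1}\|_{H_{sG(h)}\to H_{sG(h)}}\;=\;\mathcal O(h^{-N_0-1})
\]
uniformly in $\varepsilon>0$ and in $h$ small. We distinguish two regimes according to the size of $\varepsilon/h$, the constants $c_0>0$ and $h_0=1/R$ being fixed in the argument.

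Suppose first $\varepsilon\ge c_0h$. Then the damping destroys all trapping: on $\{\Re p_\varepsilon=1\}$ one has $\langle\xi,V\rangle=1$, hence $|\xi|_g\ge c>0$ and $|\Im p_\varepsilon|=\tfrac{\varepsilon}{h}|\xi|_g^2\ge c_0c^2$, which for $h\le h_0$ exceeds $|\Im z|\le\tfrac{h}{2}\gamma_0$; off $\{\Re p_\varepsilon=1\}$ the real part is bounded below, and for $|\xi|_g$ large $-\Im p_\varepsilon$ grows quadratically. Hence $p_\varepsilon-z$ is bounded below on $T^*X$, the operator $hP_\varepsilon-z$ is elliptic in the non--selfadjoint sense, $\|(hP_\varepsilon-z)^{-1}\|_{L^2\to L^2}=\mathcal O(1)$, so $\|(P_\varepsilon-\lambda)^{-1}\|_{L^2\to L^2}=\mathcal O(h)$, and since $H^{s_0}\subset L^2\subset H^{-s_0}$ this gives \eqref{eq:t2} with $N_0=0$ in this regime: strong noise only makes matters better.

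Suppose now $0<\varepsilon<c_0h$. Then $i\varepsilon\Delta_g=i\tfrac{\varepsilon}{h}(h^2\Delta_g)$ has small coefficient $\tfrac{\varepsilon}{h}<c_0$, is dissipative, and is strongly absorbing at high frequencies; microlocally on the bounded--frequency sets that control the estimate it is an $\mathcal O(c_0)$ perturbation of $hP_0-z=\tfrac{1}{i}hV-z$ with the favourable sign. We rerun, with this term present, the proof of the resolvent bound on $H_{sG(h)}$ for $hP_0-z$ in the strip $\Im z\ge-\tfrac{h}{2}(\gamma_0-\delta)$ underlying \eqref{eq:t4c}. Its ingredients are: (i) the radial point estimates at the radial source $E_u^*$ and the radial sink $E_s^*$ of the $H_{p_\varepsilon}$--flow, which fix $s$ and the anisotropic orders and to which the perturbation contributes only an additional absorbing term; (ii) elliptic estimates off $\{\Re p_\varepsilon=1\}$ together with propagation of singularities along the flow connecting $E_u^*$, the trapped set $\Gamma\subset\{\Re p_\varepsilon=1\}$ --- a smooth normally hyperbolic invariant submanifold of $T^*X\setminus0$, diffeomorphic to $X$ --- and $E_s^*$, the needed forward propagation being available precisely because $\Im p_\varepsilon\le0$ (cf.\ \eqref{e:hyperbolic-est}), with the damping contributing only a factor $\le1$ to the energy inequality; and (iii) the model estimate near $\Gamma$, where the contact structure of the flow forces the gap to be $\tfrac{1}{2}\gamma_0$ with $\gamma_0$ as in \eqref{eq:gammd} \cite{Ts,nz5}. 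In the energy identities behind (i)--(iii) the only new contributions are the purely imaginary term $-i\tfrac{\varepsilon}{h}\|h\nabla_g u\|^2$ coming from $i\varepsilon\Delta_g$, which has the sign that reinforces each estimate and may be discarded, and conjugation and commutator errors carrying the small factor $\varepsilon/h<c_0$ and one order below $\Delta_g$, hence absorbed once $c_0$ is small. All constants then depend only on $V,g,\delta,c_0$, giving $\mathcal O(h^{-N_0-1})$ uniformly in $\varepsilon\in(0,c_0h)$.

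The main obstacle is the model estimate near $\Gamma$. The soft steps --- ellipticity, radial estimates, propagation away from $\Gamma$ --- are manifestly uniform in $\varepsilon$ since there the extra operator is dissipative and of lower order. But it is near $\Gamma$ that the sharp width $\tfrac{1}{2}\gamma_0$ is produced, through a Dolgopyat--type oscillatory cancellation exploiting the non--integrability of $E_u\oplus E_s$ (equivalently, in the framework of \cite{nz5}, a microlocal normal form at $\Gamma$ followed by a delicate propagation argument), and such estimates need not survive an $\mathcal O(c_0)$ perturbation for free. One therefore has to choose $c_0$ small and push $i\varepsilon\Delta_g$ explicitly through the normal form at $\Gamma$, using its dissipative sign to exclude any loss in the gap uniformly in $\varepsilon$, rather than quoting \cite{nz5} verbatim. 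Gluing the two regimes near $\varepsilon\sim c_0h$ and verifying that a single pair $(s_0,N_0)$ works throughout is then routine.
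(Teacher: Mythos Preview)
Your overall architecture---split into large and small $\varepsilon$, exploit dissipation in the large regime, perturb the $\varepsilon=0$ analysis in the small regime---matches the paper's. The gap is the threshold. You cut at $\varepsilon\sim c_0h$, which in the small-$\varepsilon$ regime forces you to treat $i(\varepsilon/h)h^2\Delta_g$ as a \emph{principal-level} perturbation of the normal-form analysis near the trapped set, with coefficient bounded by $c_0$ but not tending to zero with $h$. You correctly flag this as the hard point and then do not carry it out; ``choose $c_0$ small and push the term through the normal form'' is a program, not a proof, and the normally-hyperbolic estimates of \cite{nz5} are delicate enough that a principal-level perturbation, however small its coefficient, genuinely requires reopening the argument.

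The paper sidesteps this entirely by cutting at $\varepsilon\sim h^2$ rather than $\varepsilon\sim h$. For $\varepsilon>K_0\gamma h^2$ it uses an elementary $L^2$ energy identity (Lemma~\ref{l:ell1}): pair $(hP_\varepsilon-z)u$ with $u$, take real and imaginary parts, and use $\Im z\ge -\gamma h$ together with $\varepsilon>K_0\gamma h^2$ to close the inequalities, yielding $\|(hP_\varepsilon-z)^{-1}\|_{L^2\to L^2}\le C\varepsilon^{-1/2}$. No semiclassical ellipticity is invoked, and this covers the intermediate range $h^2\lesssim\varepsilon\lesssim h$ that your ellipticity argument (which needs $\varepsilon/h$ bounded below) does not reach. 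For $\varepsilon=\mathcal O(h^2)$ the damping coefficient satisfies $\varepsilon/h=\mathcal O(h)$, so the term $i(\varepsilon/h)h^2\Delta_g$ is \emph{subprincipal}: the full principal symbol of $hP_\varepsilon-iW_0$ (with $W_0$ a complex absorbing potential at fiber infinity, \eqref{eq:defW0}) coincides with that of $hP_0-iW_0$, and \cite[Theorem~2]{nz5} applies verbatim to give the gap $\gamma_0/2$ (Lemma~\ref{l:nz5}). The global bound on the anisotropic space then comes from gluing this near-trapped-set estimate to the estimate of \S\ref{s:mimoa} via the Datchev--Vasy parametrix \cite{DaVa}, rather than by re-running radial and propagation estimates directly on $H_{sG(h)}$ as you sketch.

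A minor correction: in the conventions of the paper $\kappa(E_s^*)$ is the radial source and $\kappa(E_u^*)$ the radial sink (see the proof of Lemma~\ref{l:rad}), the reverse of what you write.
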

The same estimate is true for
$\lambda\in (-\infty, -R]-i[0,\textstyle{\frac12} ( \gamma_0 - \delta ) ] $ by 
recalling \eqref{eq:reality}.
Since on the compact set $ [ - R ,  R] - i [ 0 , \textstyle{\frac12} ( \gamma_0 - \delta ) ]$,
$ \lambda_j ( \epsilon ) $ 
converge uniformly to $ \lambda_j$'s, we see that for $ \epsilon $ small
enough the number of eigenvalues of $ P_\epsilon $ in that set 
is independent of $ \epsilon $. We should remark that 
the estimate \eqref{eq:t2} can be made more precise by using 
microlocally weighted spaces reviewed in \S \ref{s:mimoa} -- see \eqref{eq:precise}.

The proof of Theorem 2 combines the approach of Faure--Sj\"ostrand \cite{fa-sj}
and \cite{dz} with the work on resonance gaps for general differential 
operators \cite{nz5}. As in that paper we also use the resolvent
gluing method of Datchev--Vasy \cite{DaVa}.

For a class of maps on $ \TT^2 $ a similar result
has been obtained by Nakano--Wittsten \cite{NaWi}.

\begin{figure}
\begin{center}
\includegraphics[scale=0.5]{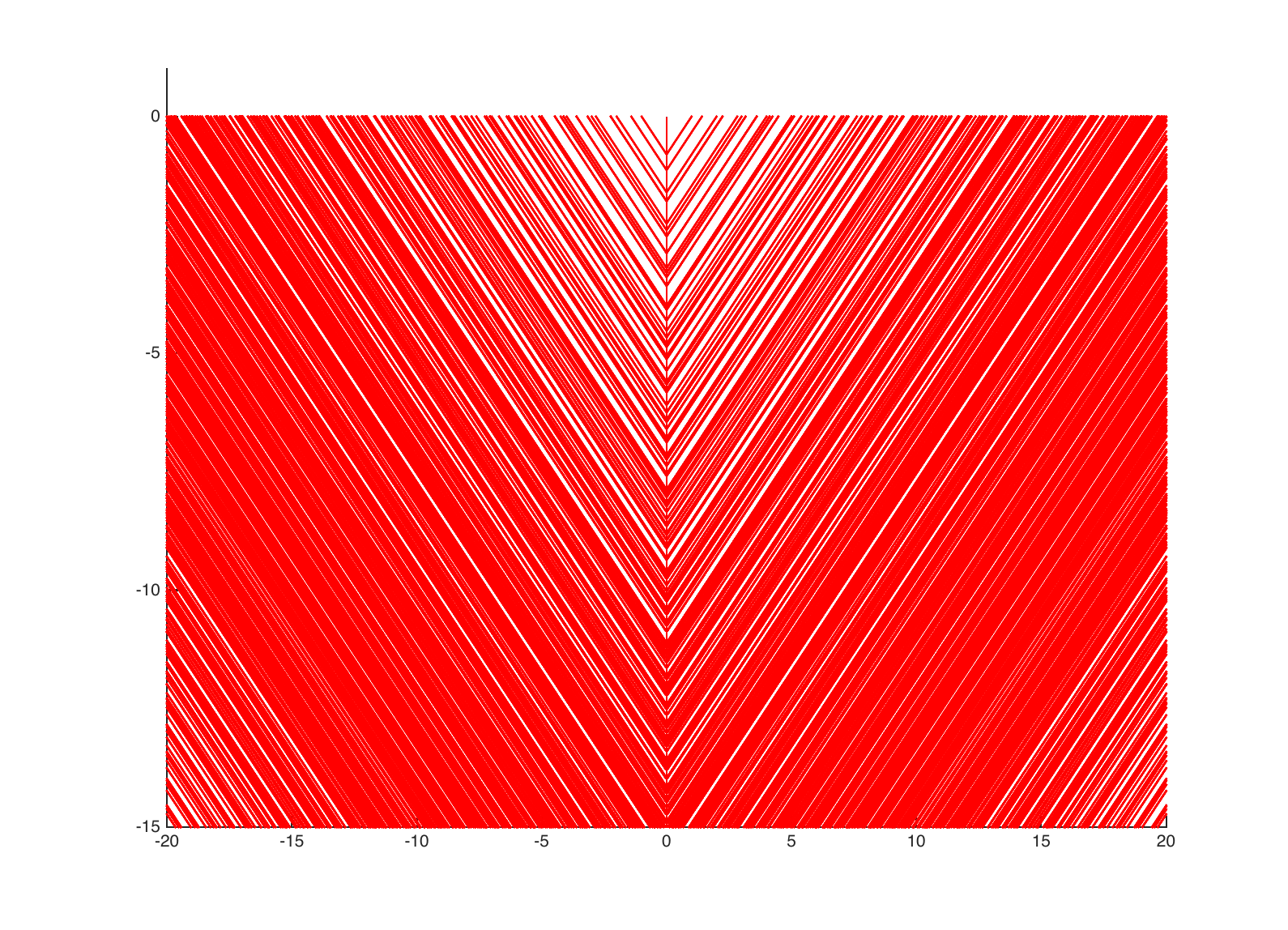}
\end{center}
\caption{The case of geodesic flow on a torus: the unit tangent bundle
is given by $ \TT^3 = \SP^1 \times \SP^1 \times \SP^1 $. If $ V $ is the
generator of the geodesic flow and $ \Delta $ is the (flat) Laplacian
on $ \TT^3 $ then
accumulation points of spectra of $ V/i + i \epsilon \Delta $ as $                          
\epsilon \to 0+ $ form a discrete set of lines.  That is dramatically
different from the Anosov case shown in Fig.~\ref{f:1}.}
\label{f:gash}
\end{figure}

\smallsection{Negative examples} It is important to point out
that the existence of a discrete limit set for the eigenvalues
of the operator $ P_\epsilon $ is very special to chaotic flows
and for mixed flows could only hold under some special domain restrictions.
The simplest ``counterexample" is given by considering $ X = \SP^1 \times
\SP^1 $ with $ V = \partial_{x_1} + \alpha \partial_{x_2} $, $ x_j \in 
\SP^1 := \RR/2 \pi \ZZ $. When $ \alpha $ is irrational then 
accumulation points of the spectrum of $ P_\epsilon $ as $ \epsilon \to 0 +$
form the lower half plane. When $ \alpha = p/q $ with $ p $ and $ q $ coprime
the limit set is equal to $ \ZZ/q - i [ 0 , \infty ) $. 

A more interesting example is given by the geodesic flow on the torus,
$ \TT^2 = \SP^1 \times \SP^1 $ with the flat metric. 
That is a contact flow on the unit cotangent bundle
$ S^* \TT^2 = \TT^2_{x_1,x_2} \times \SP^1_\theta $, generated by 
and it is generated by
\[  V = \cos \theta \partial_{x_1 } +  \sin \theta \partial_{x_2}  .\]
Defining $ P_\epsilon $ using the flat Laplacian n on $ \TT^3$, and
by expanding in Fourier modes in $x$ we see that
\[ \Spec ( P_\epsilon ) = \bigcup_{ n \in \ZZ^2 } \Spec ( P_\epsilon (n ) ) , 
\ \ P_\epsilon ( n ) := n_1 \cos \theta + n_2 \sin \theta - i \epsilon (
n_1^2 + n_2^2 + D^2_\theta ) , \]
$ D_\theta = \frac 1 i \partial_\theta $. 
We rewrite the operator $ P_\epsilon (n ) $ as follows:
\[ P_\epsilon ( n ) = - i \epsilon D_\theta^2 + |n| \cos ( \theta - \delta_n ) 
- i |n |^2 \epsilon , \ \  \delta_n = \tan^{-1} ( n_1/n_2 ) .  \]
For $ n = 0 $ the spectrum is simply $ - i \epsilon m^2 $, $ m \in \ZZ $ and
it accumulates on the negative imaginary axis. For $ n \neq 0 $ the 
asymptotic behaviour of the spectrum is determined by the asymptotic behaviour of the 
spectrum of the semiclassical operator 
\[ Q ( h ) := ( h D_\theta)^2 + i \cos \theta , \ \ h^2 := \epsilon / | n | .\]
That has been determined by Galtsev--Shafarevitch \cite{gash} who showed
that as $ h \to 0 $ the spectrum concentrates on 
on a rotated ``Y'' shape with
the vertices at $ \pm i $ and the junction at a special value $ E^* \approx 0.85 $. 

This shows that the accumulation points in the case of the generator
of the geodesic flow on the two torus regularized using the flat Laplacian
are given by 
 \[ - i [ 0 , \infty ) \cup
\bigcup_{ n \in \ZZ^2 \setminus \{ 0 , 0\} } \{ z \, : \, | \Re z | \leq n , \ \ \Im z =
- E^* |n|  + E^* | \Re z| \} , \]
and part of this set is shown in Fig.~\ref{f:gash}.

\smallsection{Acknowledgements}
We would like express thanks to Carlangelo Liverani for suggesting this
problem, to Viviane Baladi and St\'ephane Nonnenmacher for 
helpful comments on an earlier version of the paper, and 
to Michael Hitrik for informing us of the reference \cite{gash}.
We are also grateful for the support by the
Clay Research Fellowship (SD) and by
the National Science Foundation grant DMS-1201417 (MZ).

\section{Preliminaries}
\label{s:pr}

We review some definitions and basic facts mostly to fix notation and to 
provide references. The needed results from microlocal/semiclassical 
analysis are presented in detail in \cite[\S 2.3, Appendix C]{dz} 
and we will rely on the presentation given there.

\smallsection{Notation} We use the following notation: $ f =  \mathcal O_\ell ( g
 )_H $ means that
$ \|f \|_H  \leq C_\ell  g $ where the norm (or any seminorm) is in the
space $ H$, and the constant $ C_\ell  $ depends on $ \ell $. When either $ \ell $ or
$ H $ are absent then the constant is universal or the estimate is
scalar, respectively. When $ G = \mathcal O_\ell ( g )_{H_1\to H_2 } $ then
the operator $ G : H_1  \to H_2 $ has its norm bounded by $ C_\ell g $.

\subsection{Dynamical systems}
In this paper $ X $ is a compact manifold and $ \varphi_t : X \to X $ 
a $ C^\infty $ flow, $ \varphi_t = \exp t V $,  $ V \in
C^\infty ( X; T X) $.  The flow is {\em Anosov} if the tangent space 
to $ X $ has a continuous decomposition $ T_x X = E_0 ( x ) 
\oplus E_s (x) \oplus E_u ( x)$ which is invariant, 
$ d \varphi_t ( x ) E_\bullet ( x ) = E_\bullet ( \varphi_t ( x ) ) $,
$E_0(x)=\mathbb R V(x)$,
and for some $ C$ and $ \theta > 0 $ fixed 
\begin{equation}
\label{e:anosov}
\begin{split} 
&  | d \varphi_t ( x ) v |_{\varphi_t ( x )}  \leq C e^{ - \theta
  |t| } | v |_x  , \ \ v
\in E_u ( x ) , \ \ 
t < 0 , \\
& | d \varphi_t ( x ) v |_{\varphi_t ( x ) }  \leq C e^{ - \theta |t| } |  v |_x , \ \
v \in E_s ( x ) , \ \ 
t > 0 .
\end{split} 
\end{equation}
where $ | \bullet |_y $ is given by a smooth Riemannian metric on $
X $. 

Following Faure--Sj\"ostrand \cite{fa-sj} we exploit the analogy between
dynamical systems and quantum scattering, with the fiber ($\xi$) infinity 
playing the role of $x$-infinity in scattering theory. The 
pull-back map can be written analogously to the Schr\"odinger propagator
\[  \varphi_{-t} = e^{ - i t P_0 } ,  \ \ P_0 := \frac1 i V . \]
The symbol of $ P_0 $ and its Hamiltonian flow are
\[ p ( x , \xi ) = \xi( V_x ) , \ \ e^{ t H_{p} }(x,\xi) = 
 ( \varphi_t ( x ) , ( {}^T                          
d\varphi_t ( x) )^{-1} \xi )  .\]
Here $ H_p $ denotes the Hamilton vector field of $ p$: 
$ \omega ( \bullet , H_p ) = dp $, where $ \omega = d ( \xi dx ) $ 
is the symplectic form on $ T^* X $.

In the study of $ P_0 $ we need the dual decomposition of 
the cotangent space:
\begin{equation}
\label{eq:Estar} T_x^*X=E_0^*(x)\oplus E_s^*(x)\oplus E_u^*(x),
\end{equation}
where
$E_0^*(x),E_s^*(x),E_u^*(x)$ are symplectic annhilators of
$ E_s ( x ) \oplus E_u ( x ) $, $ E_0 ( x ) \oplus E_s ( x ) $,
and $ E_0 ( x ) \oplus E_u ( x )$. Hence they are dual to 
to $E_0(x),E_u(x),E_s(x)$.

A special class of Anosov flows is given by contact Anosov flows.
In that case $ X $ is a contact manifold, that is a manifold equipped
with a contact $1$-form $ \alpha $: that means that if the dimension of $X$ is 
$ 2k-1 $ then $ (d \alpha)^{\wedge k } \wedge \alpha $ is non-degenerate.
A contact flow is the flow generated by the Reeb vector field $ V$:
\begin{equation}
\label{eq:contact}      \alpha ( V ) = 1 , \ \ d \alpha ( V , \bullet ) = 0  . 
\end{equation}
For an example of a non-Anosov contact flow see Fig.\ref{f:chaos}. An 
important class of examples of Anosov contact flows is obtained from
negatively curved Riemannian manifolds $ ( M , g ) $: 
$ X = S^*M := \{ ( z, \zeta ) \in T^* M; |\zeta|_g = 1 \}$, $ \alpha = 
\zeta dz |_{S^* M } $.

\subsection{Wave front set of distributions and operators}
\label{wfs}

Semiclassical quantization on a compact manifold \cite[Appendix C]{dz},\cite[Chapter 14]{e-z} is central to our analysis.  

Let $ X $ be a compact manifold and $ h \in ( 0 , 1 ) $ a parameter
(the asymptotic parameter in the semiclassical analysis). 
A family of $h$-dependent distributions $ u\in\mathcal D'(X) $
is called  $ h$-tempered if for some $ N $, $ \| u \|_{H^{-N}} \leq 
C h^{-N} $. 
A phase space description of singularities of $ u $ 
is given by the wave front set:
\[ \WFh ( u ) \subset \overline T^* X , \]
where $\overline T^* X $ is the fiber-radially compactified cotangent 
bundle, a manifold with interior $T^*X$ and boundary, 
\begin{equation}
\label{eq:kappa}  \partial\overline T^*X=S^*X
=(T^*X\setminus 0)/\mathbb R^+, 
\ \ \kappa : T^*X \setminus 0 \longrightarrow S^* X = \partial \overline T^* X .
\end{equation}
 In addition to singularities, $\WFh$ measures oscillations on the
$h$-scale. We also refer to it as the {\em microsupport} of $ u $
or as having $ u $ {\em microlocalized} to some region containing $ \WFh ( u ) $ -- see \S C.2 for the definitions. 

For families of ($h$-tempered) operators we define the
wave front set  $\WFh'(B)$ using the Schwartz kernel of $ B $, $ K_B $:
\[  \WFh' ( B ) = \{ ( x, \xi, y , - \eta ) : ( x, y , \xi, \eta ) \in 
\WFh ( K_B ) \} . \]
This convention guarantees that $ \WFh ( I) = \Delta_{T^*X } $ is the 
diagonal, $ \{ ( x, \xi, x , \xi ) \} $, in $ T^*X \times T^*X $.

\subsection{Pseudodifferential operators}
\label{pso}

We only use the standard class of semiclassical pseudodifferential operators, 
$\Psi_h^m ( X ) $ with the symbol map $ \sigma_h $, for which 
\[ 0 \longrightarrow h \Psi^{m-1}_h ( X ) \hookrightarrow \Psi_h^m  ( X ) 
\stackrel{ \sigma_h }{\longrightarrow} S^m ( X ) / h S^{m-1} ( X ) 
\longrightarrow 0 , \]
is a short exact sequence of algebra homomorphisms and 
\[ S^m ( X ) := \{ a \in C^\infty ( T^*X ) : \partial_x^\alpha \partial_\xi^\beta a ( x, \xi ) = \mathcal O_{\alpha \beta} ( \langle \xi \rangle^{m-|\beta| } ) \} \]
(where we were informal about coordinates on $ X $).

One of our uses of the pseudodifferential calculus is that for $\chi\in C_0^\infty(\mathbb R)$, the operator
$\chi(-h^2\Delta_g)$, defined via spectral theory on $L^2$, is pseudodifferential
in the class $\Psi^{-N}_h$ for each $N$, and $\sigma_h(\chi(-h^2\Delta_g))=\chi(|\xi|_g^2)$~--
see~\cite[Theorem~14.9]{e-z}. Moreover, we implicitly use in the analysis
of the operator $\widetilde P_\epsilon(\lambda)$ in~\S\ref{s:mimoa}
that the $S^0$-seminorms of the full symbol
of $\chi(-h^2\Delta_g)$ are controlled by the $S^0(\mathbb R)$-seminorms of $\chi$.
To see that, we use the proof of~\cite[Theorem~14.9]{e-z}
to write the full symbol of $\chi(-h^2\Delta_g)$ in the form
(see~\cite[Propositions~2.2 and~2.4]{zeeman} for details)
\begin{equation}
  \label{e:symbian}
\sum_{j=0}^\infty h^j \sum_{k=0}^{2j} \chi^{(k)}(|\xi|_g^2)a_{j,k}(x,\xi),\quad
a_{j,k}\in S^{2k-j}(T^*X).
\end{equation}
If we control $\sup_{\lambda\in\mathbb R} \langle\lambda\rangle^k\chi^{(k)}(\lambda)$ for
all $k\geq 0$, then we control $\chi^{(k)}(|\xi|^2_g)$ in $S^{-2k}$ and thus we
control~\eqref{e:symbian} in $S^0 $.

The semiclassical Sobolev spaces on $ X $ are defined as
\begin{equation}
\label{eq:Sobh}
H_h^s ( X ) = ( I - h^2 \Delta_g )^{-s/2} L^2 ( X ) \subset 
\mathcal D' ( X ) ,
\end{equation}
for a choice of a Laplacian $ \Delta_g \leq 0 $ on $ X $
and with the inner product inherited from $ L^2 $.

For $ A \in \Psi^m_h ( X ) $ the elliptic set $\Ell_h(A)\subset 
\overline T^*X$ is defined
as the set of 
 $(x,\xi) \in \overline T^* X $ such $\langle \xi' \rangle^{-k}|\sigma_h(A)(x',\xi';h)| \geq c>0$
for $h$ small enough and all $(x',\xi')\in T^*X$ in a neighbourhood of $(x,\xi) $. 
We recall \cite[Proposition 2.4]{dz}:

\begin{prop}
\label{2.4}  
Suppose that $ P \in \Psi^k ( X ) $ and that $ u(h)\in \mathcal        
D'(X;\mathcal E)$ be $h$-tempered. Then 
\begin{equation}
  \label{e:elliptic-wf}
\WFh( u)\cap \Ell_h( P)\subset\WFh( Pu).
\end{equation}

If $A\in\Psi^0_h(X)$ 
and $\WFh(A) \subset \Ell_h ( P ) $, then
for each $m$,
\begin{equation}
  \label{e:elliptic-est}
\|A u\|_{H^m_h(X)}\leq C\| P 
u\|_{H^{m-k}_h(X)}+\mathcal O(h^\infty).
\end{equation}
\end{prop}

\subsection{Propagation estimates}
\label{propa}

The crucial components of the proofs of Theorems \ref{th:1} and \ref{th:2}
are propagation results presented in \cite[\S 2.3]{dz} and
proved in \cite[\S C.3]{dz}. 

\begin{figure}
\includegraphics[scale=0.9]{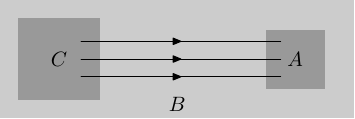}
\caption{The assumptions on the microsupports of 
the operators in the 
propagation estimate \eqref{e:hyperbolic-est} and the flow lines
of the Hamilton vector field $ H_p $. This is the simplest of the 
cases illustrated in Fig.~\ref{f} with $ A_3 $ and $ C_3 $ (in that
case $ B_3 $ is the identity).}
\label{f:pr}
\end{figure}

We start by recalling a modification of the result
of Duistermaat--H\"ormander:
\begin{prop}\label{2.5}
Assume that $ \widetilde 
P\in\Psi^1_h(X) $ and the semiclassical principal symbol, 
$
\sigma_h(\widetilde  P)\in S^1_h(X) /hS^0_h(X)
$,
has a representative $ \tilde p - i q $, where for some 
$ \delta > 0  $,
\begin{equation}
\label{eq:weirdass}
\tilde p = p + \mathcal O ( h^\delta )_{S^{1/2} ( T^*X ) }, \ \ 
p ( x, t \xi ) = t p ( x, \xi ) \in \RR  , \ \ \|\xi|_g \geq 1, \ \ t \geq 1, \ \ 
q \geq 0 .
\end{equation}
Let $e^{tH_p}$ be the Hamiltonian flow of $p$ on $\overline T^*X$
and $ u(h)\in \mathcal D'(X;\mathcal E)$ be an $h$-tempered
family of distributions.
Then (see Figure~\ref{f:pr}):

1. Assume that $A,B,C\in\Psi_h^0(X)$ and for each $(x,\xi)\in\WFh(A)$,
there exists $T\geq 0$ with $e^{-TH_p}(x,\xi)\in\Ell_h(C)$
and $e^{tH_p}(x,\xi)\in \Ell_h(B)$ for $t\in [-T,0]$.
Then for each $m$,
\begin{equation}
  \label{e:hyperbolic-est}
\|A  u\|_{H^m_h(X;\mathcal E)}\leq K\|C u\|_{H^m_h(X;\mathcal E)}
+K h^{-1}\|B   P  u\|_{H^m_h(X;\mathcal E)}+\mathcal O(h^\infty).
\end{equation}

2. If $\gamma(t)$ is a flow line of $H_p$, then for each $T>0$,
\begin{equation}
  \label{e:hyperbolic-wf}
\gamma(-T)\not\in\WFh(  u),\
\gamma([-T,0])\cap\WFh(  P  u)=\emptyset\ \Longrightarrow\
\gamma(0)\not\in\WFh(  u).
\end{equation}
\end{prop}
\begin{proof}
We explain the modifications needed in the proof of \cite[Proposition~2.5]{dz} where $ \widetilde p = p $. 
 We again construct the escape function $ f $ using 
 the homogeneous part of the symbol given by  $p$. The difference
$\widetilde p -p$ produces an additional $\mathcal O(h^\delta )_{\Psi^{2m-1/2}}$ term
in the operator $ \mathbf T_\epsilon$ of~\cite{dz}, which is uniform in the parameter $\epsilon$
of~\cite{dz} (note that in~\cite{dz} the letter $\varepsilon$
has a different meaning than in the current paper). The
$H^{m-1/2}_h$ norm should be replaced
by the $H^{m-1/4}_h$ norm on the right-hand side of~\cite[(C.12)]{dz},
which leads to the same modification on the right-hand side of~\cite[(C.5)]{dz};
the rest of the proof is carried out the same way as in~\cite{dz}.
\end{proof}

This propagation result is applied
away from the radial sinks and sources
given by $ \kappa ( E_s^* ) $ and $ \kappa ( E_u^* ) $ where $ \kappa $ is
the projection in \eqref{eq:kappa} and $ E^*_\bullet $ are from \eqref{eq:Estar}.
Near $ \kappa ( E_\bullet^* ) $ we use radial estimates obtained in the 
context of scattering theory by Melrose \cite[Propositions~9,10]{mel}
(see also Vasy~\cite[Propositions~2.3,2.4]{vasy1}). 
These less standard estimates guarantee
regularity of $  u$ near sources/sinks,
provided that $  u$ lies in a sufficiently high/low Sobolev space.

Let $p$ satisfy the assumptions in \eqref{eq:weirdass}.
Assume that $ L \subset T^* X \setminus 0 $ is a closed conic set
invariant under the flow $e^{tH_p}$. It is called 
a {\em radial source} if 
there exists an open conic neighbourhood $U$ of 
$ L$ with the following properties valid for some constant $\theta>0$:
\begin{equation}
\label{eq:defL}
\begin{split}
d\big( \kappa(e^{-tH_p}(U) ) , \kappa ( L ) \big) \to 0 
 \ &\text{ as $  t \to +\infty $;} \\
   (x,\xi)\in U   \ \Longrightarrow \ 
 |e^{-tH_p}(x,\xi)| \geq C^{-1} e^{\theta t} |\xi|,  \ &\text{ for any 
norm on the fibers.}
\end{split}
\end{equation}
 A {\em radial sink} is 
defined analogously, reversing the direction of the flow.

\begin{figure}
\includegraphics{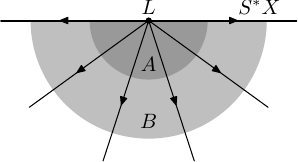}
\qquad\qquad
\includegraphics{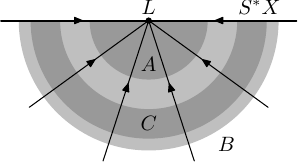}
\hbox to\hsize{\hss (a)\qquad\qquad\qquad\hss (b) \hss}
\caption{(a) The assumptions of Proposition~\ref{2.6}.
(b) The assumptions of Proposition \ref{2.7}. Here $S^*X$ is
the boundary of $\overline T^*X$ and the flow lines of $H_p$
are pictured.}
\label{f:radial}
\end{figure}

We now have a propagation estimate near radial sources.
It shows that $ P u $ controls $ u$ there for sufficiently 
regular solutions:

\begin{prop}\label{2.6}
Let $  P\in\Psi^1_h(X)$ and assume that $ \sigma_h ( P ) $
has a representative of the form $ p - i q $ 
and $ p $ and $ Q $ satisfy \eqref{eq:weirdass}.
Assume that $L\subset T^*X\setminus 0$ is a radial source 
for the flow of $ H_p $. Then there exists $m_0>0$ such that
(see Figure~\ref{f:radial}(a))

1. For each $B \in\Psi^0_h(X)$ elliptic on $\kappa(L)\subset S^*X=\partial \overline T^*X$, there exists $A\in\Psi^0_h(X)$
elliptic on $\kappa(L)$ such that if
$  u(h)\in\mathcal D'(X;\mathcal E)$ is $h$-tempered, then for each $m\geq m_0$,
\begin{equation}
  \label{e:radial1-est}
A  u\in H^{m_0}_h\ \Longrightarrow\
\|A  u\|_{H^m_h}\leq K h^{-1}\|B   P   u\|_{H^m_h}+\mathcal O(h^\infty).
\end{equation}

2. If $  u(h)\in\mathcal D'(X;\mathcal E)$ is $h$-tempered and
 $B \in\Psi^0_h(X)$ is elliptic on $\kappa(L)$, then
\begin{equation}
  \label{e:radial1-wf}
B   u\in H^{m_0}_h,\
\WFh(  P  u)\cap \kappa(L)=\emptyset\ \Longrightarrow\
\WFh(  u)\cap\kappa(L)=\emptyset.
\end{equation}
\end{prop}

The second result shows that for sufficiently low
regularity we have a propagation result at radial sinks analogous to
\eqref{e:hyperbolic-est}.

%
\begin{prop}\label{2.7}
Assume that $  P\in\Psi^1_h(X)$ is as in Proposition~\ref{2.6}
and $L\subset T^*X\setminus 0$ is a radial sink. Then there exists $m_0>0$ such that
for each $B \in\Psi^0_h(X)$ elliptic on $\kappa(L)$, there exists
$A\in\Psi^0_h(X)$ elliptic on $\kappa(L)$ and $C \in\Psi^0_h(X)$ with
$\WFh(C)\subset\Ell_h(B )\setminus \kappa(L)$, such that if $  u(h)\in\mathcal D'(X)$
is $h$-tempered, then for each $m\leq -m_0$ (see Figure~\ref{f:radial}(b))
\begin{equation}
  \label{e:radial2-est}
\|A  u\|_{H^m_h}\leq K\|C  u\|_{H^m_h}+K h^{-1}\|B   P  u\|_{H^m_h}
+\mathcal O(h^\infty).
\end{equation}
\end{prop}

The proofs of Propositions \ref{2.6} and \ref{2.7} can be found in 
\cite[\S C.3]{dz}.

\section{Definition of Pollicott--Ruelle resonances}
\label{s:defPR}

The resonances for Anosov flows are defined as spectra of the generator of
the flow acting on suitably modified spaces -- see Baladi--Tsujii \cite{b-t},  
Faure--Sj\"ostrand \cite{fa-sj}, 
Gou\"ezel--Liverani \cite{g-l}, Liverani \cite{liv2}, and references given there.

Here we follow \cite[\S 3.1--3.2]{dz} where the spaces are defined using microlocal 
weights  with simple properties:
\begin{gather}
\label{eq:weighted}
\begin{gathered}
H_{sG(h)} (X ) := \exp ( - s G ( x, h D) ) L^2 ( X ) , \ \  G \in \Psi^{0+}_h (  X) , \\
\sigma_h ( G ) = ( 1 - \psi_0 ( x, \xi) ) m_G ( x , \xi ) \log | \xi|_g , 
\end{gathered}
\end{gather}
where $ \psi_0 \in C^\infty_{\rm{c}} ( T^*X, [ 0 , 1 ] ) $ is $ 1 $ near $\{ \xi = 0 \}$, 
$ m_G ( x, \xi ) \in C^\infty ( T^* X \setminus 0 , [ -1, 1 ] ) $ is homogeneous of
degree $ 0 $ and satisfies
\begin{equation}
\label{eq:mGdef}  m_G ( x , \xi ) = \left\{ \begin{array}{ll}  \ \ 1 & \text{near $ E_s^* $}\\
-1 & \text{near $ E_u^* $ } \end{array} \right. \ \ H_p m_G ( x, \xi ) \leq 0 , \ \
( x, \xi) \in T^* X \setminus 0 . \end{equation}
The existence of such $ m_G $ is shown in \cite[Lemma C.1]{dz}. 
For convenience we choose $ |\xi|_g^2 $ to be the same metric as in the definition 
of the Laplacian $ -\Delta_g $. We can also assume that for some 
$\chi_0 \in C^\infty_{\rm{c}} ( \RR ) $, $ \chi_0 \equiv 1 $ near $ 0 $, 
\begin{equation}
\label{eq:mGch0}
G ( x, h D ) = ( 1 - \chi_0 ( -h^2 \Delta_g ) ) G ( x, h D) . 
\end{equation}
(Simply multiply $ G ( x , h D) $ by $ ( 1 - \tilde \chi_0 ( -h^2 \Delta_g ) )$
for $ \tilde \chi_0 \in C^\infty_{\rm{c}} $ such that if $ |\xi|_g \in \supp \tilde 
\chi_0 $ then $ \psi_0 ( x, \xi ) = 1 $ and then choose $ \chi_0 $ so that
$ \supp \chi_0 \subset \tilde \chi_0^{-1} ( 1 ) $.)

We note that as a set $ H_{ s G (h) } $ is independent of $ h$ and that for some  $N $ and 
$ C $, 
\begin{equation}
\label{eq:1h}   h^N \| u \|_{ H_{s G ( 1 ) }} / C \leq \| u \|_{ H_{s G ( h ) } }  \leq C 
h^{-N} \| u \|_{ H_{s G ( 1 ) } } \,. \end{equation}

We also need a version of weighted Sobolev spaces associated to $ H_{s G( h ) } $:
\begin{gather}
\label{eq:Hrs}
\begin{gathered}   H^r_{s G ( h ) } := \exp( - G_{r,s} ( x, h D ) ) L^2 ( X) , \ \ 
G_{r,s}  \in \Psi^{0+}_h ( X ) ,  \\ 
\sigma_h ( G_{r,s} ) = ( 1 - \psi_0 ( x, \xi ) ) ( s m_G ( x, \xi ) + r ) 
\log |\xi|_g . \end{gathered}
\end{gather}
We can also assume that \eqref{eq:mGch0} holds for $ G_{s,r} $ as well. 

The spaces with $ r \neq 0 $ will be used to control applications of
differential operators:
\begin{equation}
\label{eq:Hrs1}
 \Psi^m_h ( X ) \ni A : H^r_{s G ( h ) } ( X ) \longrightarrow H^{r-m}_{s G ( h ) }
 ( X ) . 
 \end{equation}
Since (see \cite[(3.9)]{dz})
\[  H_p \sigma_h ( G_{r,s} ) = s \log | \xi|_g H_p m_G + {\mathcal O}(1)_{S^0_h},
\]
we can use the estimates reviewed in 
\S \ref{propa} as in the proof of \cite[Proposition 3.4]{dz}. That
shows that for any $ r \in \RR$, 
$ \lambda \in D ( 0 , R ) $, $ s > s_0 = s_0 ( R, r ) $ and $ 0 < h < h_0 $, 
\begin{equation}
\label{eq:Qrs}
( hP_0 - i Q - h \lambda )^{-1} = \mathcal O ( 1/h ) : H^r_{ s G ( h ) } \longrightarrow 
H^r_{s G ( h ) } .
\end{equation}
Here 
$ Q $ is  a {\em complex absorbing operator}
\begin{equation}
\label{eq:defQ}
Q=\chi(-h^2\Delta_g), \ \ 
\chi\in C_0^\infty((-2,2); [0,1]), \ \ 
\chi ( t ) =1, \ t \in [-1,1] . 
\end{equation}
It is introduced to damp the trapped set which, on $ p^{-1} ( 0 )$, is
equal to the zero section. Writing
\[ P_0 - \lambda = h^{-1}  ( I + i Q ( h P_0 - i Q - 
h\lambda  )^{-1} ) (h P_0 - i Q - h \lambda ) , \]
and noting that 
\begin{equation}
\label{eq:QtoC} Q  ( h P_0 - i Q - \lambda h )^{-1} : H_{s G ( 1 ) }  \to 
C^\infty ( X ) , 
\end{equation} 
 is compact as an operator $ H_{ s G( 1) } \to H_{ s G ( 1 ) } $, 
analytic Fredholm theory (see for instance \cite[Theorem D.4]{e-z}) shows
that $ ( P_0 - \lambda)^{-1}$ is a meromorphic family:
\begin{prop}
\label{p:defPR}
For $\lambda \in D(0,R)$ and $s > s_0=s_0(R)$, 
\[  ( P_0 - \lambda)^{-1} : H_{sG (1) } \to H_{s G ( 1 ) } , \]
is a meromorphic family of operators with poles of finite rank.
These poles are independent of $ s $ and are called {\em Pollicott--Ruelle
resonances}.
\end{prop}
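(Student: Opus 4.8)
The plan is to show that $P_0-\lambda$ is a holomorphic family of Fredholm operators of index zero on $H_{sG(1)}$ for $\lambda\in D(0,R)$, and then invoke analytic Fredholm theory. The strategy follows \cite[\S3.2]{dz}: first I would pass to the semiclassical scaling, working with $hP_0-h\lambda$ on the $h$-dependent space $H_{sG(h)}$, which by \eqref{eq:1h} is the same set as $H_{sG(1)}$ with equivalent (though $h$-dependent) norm, so meromorphy on one space is equivalent to meromorphy on the other. The point of introducing the complex absorbing operator $Q$ from \eqref{eq:defQ} is that $hP_0-iQ$ has no trapping on the characteristic set: on $p^{-1}(0)$ the only invariant compact set for $H_p$ in $\overline T^*X$, after restricting by the ellipticity of $Q$ near the zero section, consists of the radial sink $\kappa(E_s^*)$ and radial source $\kappa(E_u^*)$ at fiber infinity. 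The weight $G$ is designed so that on the Sobolev space $H_{sG(h)}$ the threshold condition for Melrose's radial estimates is met: near $E_s^*$ the order $sm_G=+s$ is large and positive (the estimate propagates out of the sink), while near $E_u^*$ the order is $-s$, so on the dual side one again gets an estimate into the source. Combining these radial estimates with the Duistermaat--H\"ormander propagation estimate \eqref{e:hyperbolic-est} along $H_p$ away from fiber infinity — which applies because $\sigma_h(hP_0-iQ)=p-iq$ with $q=\chi(|\xi|_g^2)\geq 0$ — yields, for $s>s_0(R)$, $0<h<h_0$, and $\lambda\in D(0,R)$, the a priori bound that gives \eqref{eq:Qrs}: $(hP_0-iQ-h\lambda)^{-1}=\mathcal O(1/h):H_{sG(h)}\to H_{sG(h)}$. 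This is exactly \cite[Proposition~3.4]{dz}, whose proof applies verbatim here.

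Granting invertibility of $hP_0-iQ-h\lambda$, I would then write the algebraic identity displayed in the excerpt,
\[
  P_0-\lambda=h^{-1}\bigl(I+iQ(hP_0-iQ-h\lambda)^{-1}\bigr)(hP_0-iQ-h\lambda),
\]
valid for $h$ fixed small. The second factor is invertible $H_{sG(1)}\to H_{sG(1)}$, with bounded inverse, for every $\lambda\in D(0,R)$. The first factor is $I$ plus the operator $iQ(hP_0-iQ-h\lambda)^{-1}$; by \eqref{eq:QtoC} this maps $H_{sG(1)}$ into $C^\infty(X)$, since $Q=\chi(-h^2\Delta_g)\in\Psi^{-\infty}_h$ is smoothing and the resolvent is bounded on the weighted space. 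Because the inclusion $C^\infty(X)\hookrightarrow H_{sG(1)}$ is compact (the space $H_{sG(1)}$ contains a fixed Sobolev space $H^{-N}$ and is contained in another, and $C^\infty\hookrightarrow H^N$ compactly on the compact manifold $X$), the operator $iQ(hP_0-iQ-h\lambda)^{-1}$ is compact on $H_{sG(1)}$, and it depends holomorphically on $\lambda\in D(0,R)$. Hence $P_0-\lambda$ is, up to multiplication by the invertible factor $h^{-1}(hP_0-iQ-h\lambda)$, of the form $I+K(\lambda)$ with $K(\lambda)$ compact and holomorphic. Analytic Fredholm theory (e.g. \cite[Theorem~D.4]{e-z}) then gives that $\lambda\mapsto(P_0-\lambda)^{-1}=(hP_0-iQ-h\lambda)^{-1}h(I+K(\lambda))^{-1}$ is a meromorphic family of operators on $H_{sG(1)}$ with poles of finite rank.

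It remains to check that the poles do not depend on $s$ (for $s$ above the threshold) nor on $h$. For $h$: the a priori estimate holds for all $h\in(0,h_0)$, so the decomposition above, and hence the location and rank of the poles, is $h$-independent; moreover the resolvent as an operator on the $h$-independent \emph{set} $H_{sG(1)}$ does not see $h$. For $s$: if $s_1<s_2$ are both above $s_0(R)$, then one checks that a resolvent identity, or simply the density of $C^\infty(X)$ and the fact that resonant states lie in $\bigcap_s H_{sG(1)}$ (their wavefront sets are contained in $E_u^*$, see the discussion around \eqref{eq:gammd}), forces the poles and the associated finite-rank spectral projectors on $H_{s_1 G(1)}$ and $H_{s_2 G(1)}$ to coincide; this is carried out in \cite[\S3.2]{dz}. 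The main obstacle — or rather the one substantive analytic input — is establishing the a priori estimate \eqref{eq:Qrs}, i.e. the combination of radial-point estimates at $\kappa(E_s^*)$ and $\kappa(E_u^*)$ with hyperbolic propagation in between; everything after that is soft functional analysis. Since \cite[Proposition~3.4]{dz} supplies exactly this estimate and its proof transfers without change (the weight $G$, the absorbing operator $Q$, and the sink/source structure being identical), the proof is essentially a citation followed by the Fredholm argument.
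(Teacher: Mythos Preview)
Your argument is correct and follows the same route as the paper: invoke~\eqref{eq:Qrs} (i.e.\ \cite[Proposition~3.4]{dz}) to invert $hP_0-iQ-h\lambda$, write the factorization $P_0-\lambda=h^{-1}(I+K(\lambda))(hP_0-iQ-h\lambda)$ with $K(\lambda)=iQ(hP_0-iQ-h\lambda)^{-1}$ compact by~\eqref{eq:QtoC}, and apply analytic Fredholm theory. One small slip: you have the source/sink labels reversed~--- in the lifted flow $\kappa(E_s^*)$ is the radial \emph{source} (high-regularity threshold) and $\kappa(E_u^*)$ is the radial \emph{sink} (low-regularity threshold)~--- but your assignment of the thresholds $+s$ and $-s$ is correct, so the argument is unaffected.
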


The mapping property \eqref{eq:QtoC} also shows that
the operator there is of trace
class. Combined with Gohberg--Sigal theory (see for instance
\cite[(C.4.6)]{res}) this gives the following
characterization of Pollicott--Ruelle resonances:

\begin{prop}
\label{p:detPR}
Let $ R > 0 $ and assume that $ s > s_0( R )$. For $ 0 < h < h_0 ( R,s )  $ define
\[  D_R ( \lambda ) := {\det}_{ H_{s G ( 1) } } ( I +  i Q ( h P_0 - i Q - \lambda h )^{-1}  ) ,\quad
\lambda\in D(0,R). \]
Then Pollicott--Ruelle resonances in $ D ( 0 , R ) $ 
are given, with multiplicities, by the zeros of $ D_R $.
\end{prop}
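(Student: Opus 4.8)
The plan is to combine the meromorphic continuation of $(P_0-\lambda)^{-1}$ from Proposition~\ref{p:defPR} with a Gohberg--Sigal-type argument relating the multiplicity of a pole of an operator-valued meromorphic family to the order of vanishing of an associated Fredholm determinant. First I would recall the factorization already written down just before the statement,
\[
P_0-\lambda = h^{-1}\bigl(I+iQ(hP_0-iQ-h\lambda)^{-1}\bigr)(hP_0-iQ-h\lambda),
\]
and observe that for $s>s_0(R)$ and $0<h<h_0(R,s)$ the operator $hP_0-iQ-h\lambda$ is invertible on $H_{sG(1)}$ with inverse $\mathcal O(1/h)$ by~\eqref{eq:Qrs}; hence in $D(0,R)$ the poles of $(P_0-\lambda)^{-1}$ coincide, with multiplicities, with the points where $I+iQ(hP_0-iQ-h\lambda)^{-1}$ fails to be invertible. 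The factor $hP_0-iQ-h\lambda$ is holomorphic and invertible on all of $D(0,R)$, so it contributes neither zeros nor poles and does not affect multiplicities; this step is essentially the standard fact that multiplicities are unchanged under multiplication by a holomorphic invertible family.

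Next I would check that $D_R(\lambda)$ is a well-defined holomorphic function on $D(0,R)$. By~\eqref{eq:QtoC}, $Q(hP_0-iQ-\lambda h)^{-1}\colon H_{sG(1)}\to C^\infty(X)$, and since the inclusion $C^\infty(X)\hookrightarrow H_{sG(1)}$ is trace class (indeed, the resolvent of an elliptic operator on a compact manifold, composed with such an inclusion, is trace class of any order), the operator $iQ(hP_0-iQ-\lambda h)^{-1}$ is a trace-class-valued holomorphic family on $D(0,R)$, so $\det_{H_{sG(1)}}(I+\cdots)$ makes sense and is holomorphic there. Then I would invoke Gohberg--Sigal theory in the form cited in the excerpt (\cite[(C.4.6)]{res}): for a holomorphic family $I+K(\lambda)$ with $K(\lambda)$ trace class, $\lambda_0$ is a point of non-invertibility precisely when $\det(I+K(\lambda_0))=0$, and the algebraic multiplicity of $\lambda_0$ as a ``characteristic value'' of $I+K(\lambda)$ equals the order of vanishing of $\det(I+K(\lambda))$ at $\lambda_0$. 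Combining this with the previous paragraph gives exactly the claim: Pollicott--Ruelle resonances in $D(0,R)$, counted with multiplicity, are the zeros of $D_R$.

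The main obstacle is verifying that the notion of multiplicity of a resonance used in Proposition~\ref{p:defPR} (the rank of the residue of $(P_0-\lambda)^{-1}$, or equivalently the dimension of the generalized eigenspace built from the Laurent expansion) agrees with the Gohberg--Sigal characteristic-value multiplicity of the Fredholm family $I+iQ(hP_0-iQ-h\lambda)^{-1}$, and that this agreement survives the factorization by the holomorphic invertible factor $hP_0-iQ-h\lambda$. This is a bookkeeping point rather than a deep one: the logarithmic-derivative/Rouché formulation of Gohberg--Sigal (tracing $\frac{1}{2\pi i}\oint \partial_\lambda\log\det$) makes the invariance under multiplication by holomorphic invertible families transparent, and the identification of the two multiplicities is standard once one writes $(P_0-\lambda)^{-1} = h(hP_0-iQ-h\lambda)^{-1}(I+iQ(hP_0-iQ-h\lambda)^{-1})^{-1}$ and notes that the first factor is holomorphic and invertible near $\lambda_0$. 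I would also remark that $D_R$ depends on the auxiliary parameters $s$ and $h$, but its zeros and their orders do not, since the resonances and their multiplicities are intrinsic by Proposition~\ref{p:defPR}; this can be left as a one-line comment.
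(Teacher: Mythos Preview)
Your proposal is correct and follows exactly the approach the paper takes: the paper does not give a separate proof of this proposition, but simply notes (in the paragraph immediately preceding it) that~\eqref{eq:QtoC} makes $Q(hP_0-iQ-\lambda h)^{-1}$ trace class and then invokes Gohberg--Sigal theory \cite[(C.4.6)]{res}. Your write-up is in fact more detailed than the paper's, spelling out the factorization, the holomorphy of the determinant, and the identification of multiplicities, all of which the paper leaves implicit in the citation.
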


\section{Microlocal bounds on the modified operator}
\label{s:mimoa}

Let $ P_\epsilon $ be given by~\eqref{eq:Peps} and let $ Q $ be
the complex absorbing operator \eqref{eq:defQ}. The goal of this
section is to prove that for $ 0 < h < h_0 $ and $ 0 \leq \epsilon < h/C $
the operator $ h P_\epsilon - i Q - h \lambda $ is invertible on the same weighted
spaces on which $ h P_0 - i Q - h \lambda $ is invertible.
Note that for $\epsilon>0$, $hP_\epsilon-iQ-h\lambda$
is a Fredholm operator $H^2_{sG(h)}\to H_{sG(h)}$ of index 0 by the standard elliptic theory
applied to the conjugation of this operator
by $e^{sG(x,hD)}$ (see~\eqref{eq:conj} below and~\cite[Theorem~19.2.1]{ho3}).

We first prove an elliptic estimate, which does not involve the parameter $h$:
\begin{lemm}
\label{l:ell}
Suppose that $\chi_1 \in C_0^\infty((-2,2), [ 0 , 1 ] )$ satisfies
$\chi_1 =1$ on $[-1,1]$, 
and put $ \chi_2 ( t ) := \chi_1 ( 3t ) $. 
Then for $ \lambda \in D ( 0 , R ) $, 
\begin{equation}
  \label{e:ell}
\begin{split}
& \|(1-\chi_1 (-\varepsilon^2 \Delta_g))u\|_{H_{sG( 1) }}
+\|\varepsilon^2\Delta_g(1-\chi_1 (-\varepsilon^2 \Delta_g))u\|_{H_{sG ( 1) }}\\
& \ \ \ \leq 
C \epsilon \| ( 1 - \chi_2 ( -\varepsilon^2 \Delta_g ) ) ( P_\epsilon - \lambda ) u \|_{H_{sG( 1) }}
+\mathcal O_R (\varepsilon^\infty)\|u\|_{H_{sG ( 1) }} .\end{split}
\end{equation}
\end{lemm}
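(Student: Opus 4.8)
The plan is to read \eqref{e:ell} as an elliptic estimate for $P_\epsilon-\lambda$ in the frequency region $\{\,|\xi|_g\gtrsim\epsilon^{-1}\,\}$, where the viscous term $i\epsilon\Delta_g$ dominates and makes $P_\epsilon-\lambda$ invertible with a gain of order $\epsilon\langle\xi\rangle^2$. By the functional calculus, $1-\chi_1(-\epsilon^2\Delta_g)$ and $1-\chi_2(-\epsilon^2\Delta_g)$ are microsupported in $\{\epsilon^2|\xi|_g^2\geq 1\}$ and $\{\epsilon^2|\xi|_g^2\geq 2/3\}$, hence in $\{\,|\xi|_g\gtrsim\epsilon^{-1}\,\}$; on that set the full symbol of $P_\epsilon-\lambda$ equals $p(x,\xi)-i\epsilon|\xi|_g^2-\lambda$ up to a term of the form $i\epsilon\,\ell$ with $\ell\in S^1(X)$, and since $|p|\leq C|\xi|_g$, $|\lambda|\leq R$ and $\epsilon|\ell|\leq C\epsilon^2|\xi|_g^2$, the imaginary part $-\epsilon|\xi|_g^2$ dominates everything. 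Thus there is $\epsilon_0=\epsilon_0(R)>0$ such that for $0<\epsilon\leq\epsilon_0$ the full symbol has modulus $\geq\tfrac12\epsilon|\xi|_g^2\geq c\,\epsilon\langle\xi\rangle^2$ throughout $\{\,|\xi|_g\gtrsim\epsilon^{-1}\,\}$; for $\epsilon>\epsilon_0$ the statement is read with the implicit restriction $\epsilon\leq\epsilon_0$.

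First I would fix $\psi\in C^\infty(\mathbb R;[0,1])$ with $\psi\equiv1$ on $[1,\infty)$ and $\psi\equiv 0$ on $(-\infty,3/4]$, so that on $[0,\infty)$ one has $(1-\chi_1)\psi=1-\chi_1$ and $\chi_2\psi=0$, while $\psi(\epsilon^2|\xi|_g^2)$ is supported in the region of ellipticity above. Writing $\psi(-\epsilon^2\Delta_g)=I-\rho(-\epsilon^2\Delta_g)$ for a $\rho\in C_0^\infty(\mathbb R)$ agreeing with $1-\psi$ on $[0,\infty)$, and using that by \eqref{e:symbian} the full symbol of $\psi(-\epsilon^2\Delta_g)$ lies in $S^0(X)$ with seminorms uniform in $\epsilon$ and that $\psi(-\epsilon^2\Delta_g)$ equals its quantization modulo $\mathcal O(\epsilon^\infty)$, I would run the standard iterative elliptic parametrix construction based on the lower bound above: divide the full symbol of $\psi(-\epsilon^2\Delta_g)$ by $p-i\epsilon|\xi|_g^2-\lambda+i\epsilon\ell$ (well-defined on the support of the former, where the symbol is $\geq c\epsilon\langle\xi\rangle^2$), quantize, correct the remainder — which lies in $S^{-1}(X)$ and is $\mathcal O(\epsilon)$ on $\{|\xi|_g\gtrsim\epsilon^{-1}\}$ — and iterate. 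Asymptotic summation gives $E\in\epsilon^{-1}\Psi^{-2}(X)$ (i.e. $\epsilon E$ has $\Psi^{-2}$-seminorms bounded uniformly in $\epsilon\leq\epsilon_0$), microsupported in $\supp\psi(\epsilon^2|\xi|_g^2)$, with
\[
E\,(P_\epsilon-\lambda)=\psi(-\epsilon^2\Delta_g)+R,\qquad
\|R\|_{H^{-N}_{sG(1)}\to H^{N}_{sG(1)}}=\mathcal O_{N,R}(\epsilon^\infty)\quad\text{for every }N.
\]
The iteration closes with an $\epsilon^\infty$ remainder because any $A\in\Psi^{-k}(X)$ microsupported in $\{|\xi|_g\gtrsim\epsilon^{-1}\}$ satisfies $\|A\|_{H^r_{sG(1)}\to H^r_{sG(1)}}=\mathcal O(\epsilon^k)$ uniformly in $\epsilon$: its symbol is $\mathcal O(\langle\xi\rangle^{-k})=\mathcal O(\epsilon^k)$ there, and conjugation by $e^{\pm sG(x,D)}$, a weight of order $0+$, perturbs this only by a factor $\mathcal O(\log(1/\epsilon))$, which is swallowed.

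To conclude, the functional calculus gives $1-\chi_1(-\epsilon^2\Delta_g)=(1-\chi_1(-\epsilon^2\Delta_g))\psi(-\epsilon^2\Delta_g)$, so together with the formula for $E$,
\[
(1-\chi_1(-\epsilon^2\Delta_g))u=(1-\chi_1(-\epsilon^2\Delta_g))\,E\,(P_\epsilon-\lambda)u+\mathcal O_R(\epsilon^\infty)\|u\|_{H_{sG(1)}}.
\]
I would then split $(P_\epsilon-\lambda)u=(1-\chi_2(-\epsilon^2\Delta_g))(P_\epsilon-\lambda)u+\chi_2(-\epsilon^2\Delta_g)(P_\epsilon-\lambda)u$ and note that $E\,\chi_2(-\epsilon^2\Delta_g)$ is residual because the microsupports are disjoint ($\chi_2\psi=0$); using $P_\epsilon-\lambda=\mathcal O(1):H_{sG(1)}\to H^{-2}_{sG(1)}$ uniformly for $\epsilon\leq1$, the $\chi_2(-\epsilon^2\Delta_g)$ piece contributes only $\mathcal O_R(\epsilon^\infty)\|u\|_{H_{sG(1)}}$, leaving
\[
(1-\chi_1(-\epsilon^2\Delta_g))u=(1-\chi_1(-\epsilon^2\Delta_g))\,E\,(1-\chi_2(-\epsilon^2\Delta_g))(P_\epsilon-\lambda)u+\mathcal O_R(\epsilon^\infty)\|u\|_{H_{sG(1)}}.
\]
Finally $(1-\chi_1(-\epsilon^2\Delta_g))E\in\epsilon^{-1}\Psi^{-2}(X)$ and $\epsilon^2\Delta_g(1-\chi_1(-\epsilon^2\Delta_g))E\in\epsilon\,\Psi^{0}(X)$, both microsupported in $\{|\xi|_g\gtrsim\epsilon^{-1}\}$, so by the mapping bound of the previous paragraph both send $H_{sG(1)}\to H_{sG(1)}$ with norm $\mathcal O(\epsilon)$; applying each to the last display (for the second one also using that $\epsilon^2\Delta_g$ composed with a residual operator is residual) and adding the two resulting inequalities gives exactly \eqref{e:ell}.

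The hard part will be keeping everything uniform in $\epsilon$: the parametrix lives in the rescaled class $\epsilon^{-1}\Psi^{-2}$, and one must verify that the construction of the elliptic symbol, the asymptotic summation, and all the mapping estimates carry $\epsilon$-independent constants. Conceptually this is the semiclassical elliptic parametrix with $\hbar=\epsilon$ restricted to $\{|\xi|_g\gtrsim\hbar^{-1}\}$, where $\hbar(P_\epsilon-\lambda)$ has semiclassical symbol $p(x,\xi)-i|\xi|_g^2-\hbar\lambda$, elliptic of order $2$; the one genuinely extra point is that the anisotropic weight is normalized at $h=1$, not at $h=\epsilon$ — the two normalizations differ by $\epsilon^{\pm\mathcal O(s)}$ through \eqref{eq:1h}, so a semiclassical estimate cannot simply be transcribed — but on frequencies $\gtrsim\epsilon^{-1}$ conjugation by $e^{\pm sG(x,D)}$ costs only the harmless logarithm used above, which is what makes the clean $\epsilon$-powers survive.
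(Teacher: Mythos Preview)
Your argument is correct and shares the paper's core idea --- $\epsilon(P_\epsilon-\lambda)$ is elliptic of order $2$ in the $\epsilon$-semiclassical sense on $\{\epsilon|\xi|_g\gtrsim 1\}$ --- but the two proofs handle the anisotropic weight $H_{sG(1)}$ by genuinely different devices. The paper notes that $e^{sG(x,D)}$ is singular at the rescaled zero section when viewed in the $\epsilon$-calculus and cures this by introducing a modified weight $G_\epsilon(x,\epsilon D):=(1-\chi_0(-\epsilon^2\Delta_g))G(x,D)\in\log(1/\epsilon)\,\Psi_\epsilon^{0+}$, then proves the key identity $(e^{sG(x,D)}-e^{sG_\epsilon(x,\epsilon D)})(1-\chi_j(-\epsilon^2\Delta_g))=\mathcal O(\epsilon^\infty)$; after that the $H_{sG(1)}$-estimate reduces to the standard $\epsilon$-elliptic estimate for the operator conjugated by $e^{sG_\epsilon}$. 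You instead stay in the $h=1$ calculus and use that if $A\in\Psi^{-k}$ has symbol supported in $\{|\xi|_g\geq c\epsilon^{-1}\}$ with uniform $S^{-k}$-seminorms then $\epsilon^{-k}A\in\Psi^0$ uniformly (this is the real content of your ``only a log'' remark --- in fact no log is needed: on the support $\langle\xi\rangle^{-k}\leq c^{-k}\epsilon^k$, and the same holds after any $\partial_x^\alpha\partial_\xi^\beta$), whence $\|A\|_{H_{sG(1)}\to H_{sG(1)}}=\mathcal O(\epsilon^k)$ by the fixed, $\epsilon$-independent boundedness of $\Psi^0$ on $H_{sG(1)}$. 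This is a clean shortcut that avoids $G_\epsilon$ altogether; its cost is that several uniformity checks (the $\epsilon^{-1}S^{-2}$-bounds on the parametrix symbol and on each step of the iteration, the $\mathcal O(\epsilon^\infty)$-smallness of compositions whose $\xi$-supports are separated by a gap $\sim\epsilon^{-1}$, the asymptotic summation) are only sketched --- they all go through, but unpacking them amounts to redoing the $\epsilon$-semiclassical parametrix in classical dress. The paper's route makes the semiclassical structure explicit and is more systematic; yours is more direct once one grants the continuous action of $\Psi^0$ on $H_{sG(1)}$.
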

\begin{proof}
In \eqref{eq:mGch0} we can assume that $ \supp \chi_0 \subset
\chi_2 ^{-1} ( 1 ) $: changing $ \chi_0 $ corresponds to 
changing $ \psi_0 $ in the definition of $ H_{s G ( 1 ) } $
that produces an equivalent norm 
(see \cite[Theorem 8.8]{e-z}). 

The weight of the space $H_{sG(1)}$ is not smooth at the zero section when
one considers the $\varepsilon$-quantization. To counteract this problem,
we introduce a new, $ \epsilon$-dependent, norm on $ H_{s G ( 1 ) }  $ using 
a modified weight:
\begin{equation} 
\label{eq:normse}   \| u \|_{ s , \epsilon } := \| e^{ s G_\epsilon ( x, 
\epsilon D) } u \|_{L^2}, \ \ G_\epsilon ( x, \epsilon D ) := 
( 1 - \chi_0 (- \epsilon^2  \Delta_g ) ) G ( x, D ) , \end{equation}
where 
$  G_\epsilon ( x , \epsilon D) \in \log(1/\epsilon)\Psi_\epsilon^{0+} ( X ) $ and
\begin{equation}
\label{eq:sigmaeps} \sigma_\epsilon ( G_\epsilon ( x, \epsilon D ) ) := 
( 1 - \chi_0 (|\xi|_g^2 ) ) \log ( |\xi|_g / \epsilon ) 
m_G ( x , \xi ) \mod \epsilon \log(1/\epsilon) S^{-1+} ( T^* X ) .\end{equation}
(We used here the homogeneity of $ m_G $: $ m_G ( x, \xi/ \epsilon ) =
m_G ( x , \xi ) $.)
 
We now claim that for $ j = 1, 2 $, 
\begin{equation}
  \label{eq:1eps}
(e^{sG(x,D)}-e^{sG_\epsilon(x,\epsilon D)})(1-\chi_j(-\epsilon^2\Delta_g))=\mathcal O(\epsilon^\infty)_{\mathcal D'(X)\to C^\infty(X)}.
\end{equation}
This can be rewritten as the following identity for $t=s$:
$$
(e^{tG(x,D)}e^{-tG_\epsilon(x,\epsilon D)}-I)e^{sG_\epsilon(x,\epsilon D)}(1-\chi_j(-\epsilon^2\Delta_g))
=\mathcal O(\epsilon^\infty)_{\mathcal D'(X)\to C^\infty(X)}.
$$
Differentiating the left-hand side in $t$, we obtain
$$
e^{tG(x,D)}C(t,s),\quad
C(t,s)=\chi_0(-\epsilon^2\Delta_g)G(x,D)e^{(s-t)G_\epsilon(x,\epsilon D)}(1-\chi_j(-\epsilon^2\Delta_g)).
$$
We now consider $ C ( t,s ) $ as an operator
in $ \Psi^{s-t+} ( X ) $.
Since $ \supp \chi_0 \cap \supp ( 1 - \chi_j ) = \emptyset $, 
we see that the all the terms in the symbolic composition formula 
for the four factors in $ C ( t, s ) $ vanish.
The remainder (estimated, for instance, as in \cite[(9.3.7)]{e-z})
is of size $ \epsilon^N $ for any any $ N $.  Hence 
$ C ( t,s) \in \epsilon^\infty \Psi^{-\infty } ( X ) $ and consequently
\[  e^{t G ( x, D ) } C ( t,s ) \in 
\epsilon^\infty \Psi^{-\infty }( X ) , \]
uniformly for bounded $ t,s $. Integration then gives \eqref{eq:1eps}.

By~\eqref{eq:1eps}, we may replace the $H_{sG(1)}$ norms in~\eqref{e:ell} by the
$\|\bullet\|_{s,\epsilon}$ norms.
We now consider our operator in the $\varepsilon$-pseudodifferential calculus:
\[ \varepsilon ( P_\varepsilon - \lambda ) \in \Psi_\varepsilon^2 , \ \ 
p_\varepsilon( x, \xi ) := \sigma_\epsilon ( \varepsilon P_\epsilon ) = -i | \xi|^2_g + \xi ( V_x ) .\]
This operator is elliptic in the class $\Psi^2_\epsilon$ for $\xi\neq 0$. By the choice of $\chi_j$'s, we see that both
$\varepsilon(P_\epsilon-\lambda)\in\Psi^2_\epsilon$ and $(1-\chi_2(-\epsilon^2\Delta_g))\in\Psi^0_\epsilon$
are elliptic on 
$ \WF_\varepsilon ( 1 - \chi_1 ( -\epsilon^2 \Delta_g ) )$.
Hence the estimate \eqref{e:ell} holds for $ s = 0 $ -- 
see Proposition \ref{2.4} above.

To prove~\eqref{e:ell} for the
$ \|\bullet\|_{s, \epsilon  }$-norms,
we consider conjugated operators:
\[   P_{ \epsilon, s} := e^{s G_ \epsilon ( x, \epsilon D ) } 
P_\epsilon e^{ -s G_ \epsilon( x , \epsilon D )  ) } , \ \
A_{j,s } ( x, h D ) := e^{s G_ \epsilon ( x , \epsilon D )} 
( 1 - \chi_j ( -\epsilon^2 
\Delta_g ) ) e^{ -s G_ \epsilon( x, \epsilon D ) } , \]
and need to prove that
\begin{equation}
\label{eq:ell1}
\| A_{1,s} u \|_{H_\epsilon^2 } \leq C  \| A_{2,s}(\epsilon P_{\epsilon, s } - \epsilon 
\lambda ) \|_{L^2} +\mathcal O(\epsilon^\infty)\|u\|_{L^2}. 
\end{equation}
(The conjugation of $ \epsilon^2 \Delta_g $ appearing in 
\eqref{e:ell} is handled in the same way as $ \epsilon P_{\epsilon, s}$ below.)

We have, as in \cite[\S 3.3]{dz}, $\epsilon P_{\epsilon,s}\in\Psi^2_\epsilon$, $A_{j,s}\in \Psi^0_\epsilon$, and
\begin{equation}
\label{eq:conj}  \epsilon P_{\epsilon, s} = \epsilon P_\epsilon - i \epsilon s \frac{i} {\epsilon} [ G_ \epsilon  , \epsilon P_\epsilon ]
+ \mathcal O (\epsilon^2 \log(1/\epsilon))_{ \Psi_{\epsilon}^{0+} } , \end{equation}
so that
\[ \sigma_\epsilon ( \epsilon P_{\epsilon, s } ) =- i  |\xi|_g^2 + \xi ( V ) +
\epsilon sH_{|\xi|^2_g } \sigma_\epsilon ( G_\epsilon  ) + i \epsilon H_{ \xi ( V ) } \sigma_\epsilon ( G_\epsilon ) \mod  \epsilon S^1 ( T^* X ) . \]
Recalling \eqref{eq:sigmaeps} we see that 
\[  | H_{|\xi|^2_g } \sigma_\epsilon ( G_ \epsilon  ) | +  |H_{ \xi ( V ) } \sigma_\epsilon ( G_\epsilon ) | \leq 
C \log(1/\epsilon) \langle \xi\rangle^{1 +} .\] 
Hence, $ \epsilon P_{ \epsilon , s } - \epsilon \lambda $ 
is elliptic in $ \Psi^2_\epsilon $ on the 
set $ |\xi| > \delta$ for any $ \delta > 0 $. 
Composition of pseudodifferential operators in $ \Psi^*_\epsilon $ shows
that 
\[  \WF_\epsilon ( A_{1, s } ) \subset \{ |\xi | > 1 \} \subset {\rm{ell}}_\epsilon ( \epsilon P_{ \epsilon , s}-\epsilon \lambda ), \ \ 
\WF_\epsilon ( I - A_{2,s} ) \cap \WF_\epsilon ( A_{1,s} ) = 
\emptyset .\]
We can apply Proposition \ref{2.4} again to obtain 
\eqref{eq:ell1} and hence \eqref{e:ell}.
\end{proof}
 
We turn to the question of invertibility of $ h P_\epsilon - i Q - \lambda h$
and suppose that
\[  ( h P_\epsilon - i Q - \lambda h ) u = f .\]

For $ \epsilon < h/C $ we have $ ( 1 - \chi_2 ( -\epsilon^2 \Delta_g ) ) 
Q =0 $. Hence in view of \eqref{eq:1h} and \eqref{e:ell},
\begin{equation}
\label{eq:ell2.1}
\begin{split}
& \|(1-\chi_1 (-\varepsilon^2 \Delta_g))u \|_{H_{sG( h) }}
+\|\varepsilon^2\Delta_g(1-\chi_1 (-\varepsilon^2 \Delta_g))u\|_{H_{sG ( h) }}\\
& \ \ \ \leq 
C h^{-N}  \epsilon \| ( 1 - \chi_2 ( -\varepsilon^2 \Delta_g ) ) f \|_{H_{sG( h) }} 
+\mathcal O_{R}  (h^{-N} \varepsilon^\infty)\|u\|_{H_{sG ( h) }} ,\end{split}
\end{equation}
for $ \lambda \in D ( 0 ,  R ) $, $ \epsilon < h/C $, and some $ N $ depending on 
$ s $. 

Put
\begin{equation}
\label{eq:defPeh}   \widetilde P_{\epsilon} ( \lambda ) := {h\over i}V +
i\epsilon h \Delta_g \chi_1 ( -\epsilon^2 \Delta_g ) - iQ - \lambda h ,
\end{equation}
Then 
\begin{equation}
  \label{e:houston}
\widetilde P_{\epsilon  } ( \lambda ) u  
=-i\varepsilon h\Delta_g \big(1-\chi_1(-\varepsilon^2\Delta_g)\big)u+f=:F.
\end{equation}
From \eqref{eq:ell2.1} we see immediately that
\begin{equation}
\label{eq:houston}
\|F\|_{H_{sG ( h ) }}\leq C h^{-N} \|f\|_{H_{sG ( h ) }}+\mathcal O( h^{-N} \varepsilon^\infty)\|u\|_{H_{sG ( h ) }},
\end{equation}
where $ N $ depends on $ s $.

The operator $ \widetilde P_{\epsilon} ( \lambda ) $ 
on the left-hand side of~\eqref{e:houston} is an $h$-pseudodifferential
operator in $\Psi^1_h$ and 
$$
\sigma_h ( \widetilde P_{\epsilon} ( \lambda ) ) = \xi(V_x)-i|\xi|_g\chi_1\Big({\varepsilon^2\over h^2}|\xi|^2_g\Big){\varepsilon\over h}|\xi|_g-
 i\chi(|\xi|_g^2)
 \in S^1 ( T^*X ) ,
$$
uniformly in $ \epsilon \in (0,  Ch ) $, $ \lambda \in D ( 0 , R ) $. 
The domain of this operator is given by the domain of $  V $ acting on 
$ H_{s G (h)} $: 
\begin{gather*}
 D_{ s G ( h ) } = \{ u \in H_{ s G( h ) } \mid V u \in H_{s G ( h ) } 
\subset {\mathcal D' } ( X ) \}\,, \\
\| u \|_{ D_{s G( h ) } } = \| u \|_{ H_{sG ( h )}} + h \| V u \|_{ H_{sG( h )} }. 
\end{gather*}

We now verify that the main estimate of \cite[\S 3.3]{dz} is valid for the
operator $ \widetilde P_{ \epsilon  } ( \lambda ) $. The key fact is that
the operator is now of order $ 1 $ in $ \xi $ as, using Lemma \ref{l:ell}, 
we can control $ F $ by $ f $. 
\begin{lemm}
\label{l:rad}
Suppose that $ \lambda \in D ( 0 , R ) $ and that $ 0 \leq \epsilon \leq h/C_0 $. 
Then there exist $ h_0 = h_0 ( R )$, $ s_0 = s_0 ( R ) $, $C=C(R)$
(independent of $ \epsilon $) such
that for $ u \in D_{ s G ( h ) } $
 and the operator $ \widetilde P_{ \epsilon } ( \lambda )$ defined in~\eqref{eq:defPeh}
\begin{equation}
\label{eq:rad}
\|u\|_{H_{sG(h) }}\leq Ch^{-1}\| \widetilde P_{ \epsilon } ( \lambda ) u \|_{H_{sG(h)}}, 
\ \  s_0 < s , \ \ 0 < h < h_0 .
\end{equation}
\end{lemm}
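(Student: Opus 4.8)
The estimate \eqref{eq:rad} is essentially the main radial-point estimate of \cite[\S 3.3]{dz}, now applied to the $h$-pseudodifferential operator $\widetilde P_\epsilon(\lambda)\in\Psi^1_h$ rather than to $hP_0-iQ-h\lambda$. The plan is to conjugate by the weight, $P_{\epsilon,s}:=e^{sG(x,hD)}\widetilde P_\epsilon(\lambda)e^{-sG(x,hD)}$, work on $L^2$, and split the phase space $\overline T^*X$ into three regions: (i) a neighborhood of the radial source $\kappa(E_u^*)$, (ii) a neighborhood of the radial sink $\kappa(E_s^*)$, and (iii) everything else, including the zero section. In region (iii) away from the zero section one uses the hyperbolic propagation estimate \eqref{e:hyperbolic-est}, flowing every point either forward into the complex-absorbing region where $Q$ is elliptic, or into the sink; near the zero section the operator $-iQ=-i\chi(|\xi|_g^2)$ is elliptic, so an elliptic estimate applies there. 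The nonpositivity of $\Im\sigma_h(\widetilde P_\epsilon(\lambda))$, namely $-|\xi|_g\chi_1(\varepsilon^2|\xi|^2/h^2)(\varepsilon/h)|\xi|_g-\chi(|\xi|^2_g)\le 0$, is exactly what makes the forward propagation of singularities applicable, just as for $\epsilon=0$; this is the sense in which the damping term is benign.

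\smallskip
The two radial-point regions (i) and (ii) are handled by Melrose's radial estimates \cite[Propositions 9,10]{mel}, in the form used in \cite{dz}. The key point — and this is where the weight $s>s_0(R)$ enters — is that the subprincipal/weight contribution $sH_p m_G\log|\xi|_g$ has a definite sign relative to the radial dynamics: at the source $\kappa(E_u^*)$ the weight $m_G=-1$ pushes the effective threshold in the right direction, giving an unconditional estimate ``above threshold'' that controls $u$ microlocally near $E_u^*$ in terms of $\widetilde P_\epsilon(\lambda)u$; at the sink $\kappa(E_s^*)$, where $m_G=+1$, one gets the ``below threshold'' estimate that propagates control \emph{out} of the sink, at the cost of an error term microlocalized away from $\kappa(E_s^*)$, which is then absorbed by region (iii). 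One checks that the threshold condition reads $s>s_0(R)$ with $s_0$ depending only on $R$ (through $\sup|\lambda|$ and the fixed dynamical constants $C,\theta$), and is uniform in $\epsilon\in[0,h/C_0]$ because the extra term in the symbol of $\widetilde P_\epsilon(\lambda)$ only makes $\Im\sigma_h$ more negative and is supported in $|\xi|\le Ch/\varepsilon$, hence does not interfere with the radial analysis at fiber infinity.

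\smallskip
Assembling the three microlocal estimates with a partition of unity $A_{\mathrm{src}}+A_{\mathrm{snk}}+A_{\mathrm{prop}}+A_{0}=I+\mathcal O(h^\infty)$ adapted to the above regions, absorbing the error terms (each microlocalized strictly into a region where another estimate gives a gain, together with the $\mathcal O(h^\infty)$ remainders absorbed using $h$-temperedness of $u\in D_{sG(h)}$), yields
$$
\|u\|_{L^2}\le Ch^{-1}\|P_{\epsilon,s}u\|_{L^2}+\mathcal O(h^\infty)\|u\|_{L^2},
$$
and for $h<h_0(R)$ the last term is absorbed into the left-hand side. Undoing the conjugation gives \eqref{eq:rad}. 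The main obstacle is bookkeeping rather than conceptual: one must verify that every constant ($h_0$, $s_0$, $C$) can be chosen independent of $\epsilon\in[0,h/C_0]$, which amounts to checking that all symbol estimates for $\widetilde P_\epsilon(\lambda)$ in $\Psi^1_h$ are uniform in this range — and this is precisely the content built into \eqref{eq:defPeh} via the cutoff $\chi_1(-\epsilon^2\Delta_g)$, which keeps the added term bounded in $\Psi^1_h$ with seminorms controlled by those of $\chi_1$ as discussed after \eqref{e:symbian}.
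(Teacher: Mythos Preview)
Your overall strategy coincides with the paper's: conjugate by the weight, use radial estimates near $\kappa(E_s^*)$ and $\kappa(E_u^*)$, standard propagation and elliptic estimates elsewhere, and patch together with a partition of unity. Two specific points, however, are off.

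\textbf{Source and sink are swapped.} For the Hamiltonian flow $e^{tH_p}$ lifted to $\overline T^*X$, the radial \emph{source} is $\kappa(E_s^*)$ and the radial \emph{sink} is $\kappa(E_u^*)$, not the other way around. This matters because the threshold conditions go opposite ways at the two sets: near the source $\kappa(E_s^*)$ one has $m_G=+1$, so $H_{sG(h)}\sim H^s_h$ there, and the high-regularity (above-threshold) source estimate \cite[Proposition~2.6]{dz} gives the \emph{unconditional} bound~\eqref{eq:A1}; near the sink $\kappa(E_u^*)$ one has $m_G=-1$, so $H_{sG(h)}\sim H^{-s}_h$, and the low-regularity (below-threshold) sink estimate \cite[Proposition~2.7]{dz} gives the \emph{conditional} bound~\eqref{eq:A2} with the extra $\|C_2 u\|$ term. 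Your description reverses both the labels and the location of the conditional error term. With the labels as you wrote them, the threshold hypotheses actually fail (you would be trying to run the above-threshold source estimate in $H^{-s}$), so the argument as stated does not close.

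\textbf{Non-homogeneity after conjugation.} In the propagation region you invoke \eqref{e:hyperbolic-est} directly, but once you conjugate by $e^{sG(x,hD)}$ the real part of the principal symbol is no longer $p(x,\xi)=\xi(V_x)$: one picks up the term $-hs\log|\xi|_g\, H_{m_G}\!\big(|\xi|_g\chi_1(\varepsilon^2|\xi|_g^2/h^2)(\varepsilon/h)|\xi|_g\big)$, see~\eqref{eq:pses}. This perturbation is real, $\mathcal O(h)$ in $S^{1/2}$ (more precisely $\mathcal O(h)_{S^{0+}}$), and not homogeneous of degree~1, so the hypotheses of \cite[Proposition~2.5]{dz} are not literally satisfied. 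The paper handles this by observing that $p_{\epsilon,s}=p+\mathcal O(h)_{S^{1/2}}$ and rerunning the positive-commutator argument with the escape function built from $p$; the perturbation contributes an $\mathcal O(h)_{\Psi^{2m-1/2}}$ term in the commutator, which forces $H^{m-1/2}_h$ to be replaced by $H^{m-1/4}_h$ in the error, but otherwise the proof goes through uniformly in~$\epsilon$. Your remark that ``all symbol estimates are uniform'' is true but does not by itself dispose of this non-homogeneity issue.
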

\begin{proof} We refer to the proof of \cite[Proposition 3.4]{dz} for 
details and explain the differences between the operator 
$ \widetilde P_{ \epsilon}(\lambda) $ and the operator
$ \widetilde P_0(\lambda)={h\over i} V - i Q -h\lambda$
considered there.  We recall that the proof is based
on propagation results recalled in \S \ref{propa}.

First of all,
near $ \kappa ( E_s^* ) $, where $\kappa:T^*X\setminus 0\to S^*X=\partial \overline T^*X$
is the projection to fiber infinity,
we use the {\em radial source} estimate (Proposition \ref{2.6}).
The operator $ \widetilde P_{ \epsilon } ( \lambda ) $ satisfies
the assumptions of Proposition \ref{2.6} and we get for each $ N $
\begin{equation}
\label{eq:A1}  \| A_1 u \|_{ H^s_h } \leq Ch^{-1} \| B_1\widetilde P_{ \epsilon } 
( \lambda ) u \|_{ H_h^{s } } + {\mathcal O } ( h^\infty ) \| u \|_{ H_h^{-N} }
    , \ \ s > s_0 , \end{equation}
where both $A_1,B_1\in\Psi^0_h$ are microlocalized
in a small neighborhood of $\kappa(E_s^*)$ and
$ A_1$ is elliptic near $ \kappa ( E_s^* ) $~-- see
Fig.~\ref{f}.
From the properties of the weight $ G$ -- see \eqref{eq:mGdef} -- 
we see that 
\[ \| A_1 u \|_{  H_{s G ( h ) } } = \| A_1 u \|_{ H^s_h } + \mathcal O ( h^\infty )
\| u \|_{ H_h^{-N} }  , \ \ 
\| B_1 f \|_{ H_{s G ( h ) } } = \| B_1 f \|_{ H^s_h } + \mathcal O ( h^\infty )
\| u \|_{ H_h^{-N} } , \]
and hence we can replace $ H^s_h $ by $ H_{s G ( h ) } $ in \eqref{eq:A1}.

Similarly if $ A_2\in \Psi^0_h$ is microlocalized near $ \kappa ( E_u^* ) $
there exist $ B_2, C_2\in \Psi^0_h  $ microlocalized near $ \kappa ( E_u^* ) $ with 
$ \WFh ( C_2 ) \cap \kappa( E_u^* ) = \emptyset $ (see Fig.~\ref{f})
such that 
\begin{equation}
\label{eq:A2}  \| A_2 u \|_{ H^{-s}_h } \leq C \|C_2 u\|_{H^{-s}_h}+
Ch^{-1} \| B_2 
\widetilde  P_{ \epsilon  } 
( \lambda ) u \|_{ H_h^{-s } } + {\mathcal O } ( h^\infty ) \| u \|_{ H_h^{-N} }
    , \ \ s > s_0 . \end{equation}
This follows from Proposition \ref{2.7}.
Recalling \eqref{eq:mGdef} again we see that
\[ \| A_2 u \|_{  H_{s G ( h ) } } = \| A_2 u \|_{ H^{-s}_h } + \mathcal O ( h^\infty )
\| u \|_{ H_h^{-N} }  , \ \ 
\| B_2 f \|_{ H_{s G ( h ) } } = \| B_2 u \|_{ H^{-s}_h } + \mathcal O ( h^\infty )
\| u \|_{ H_h^{-N} } , \]
and similarly for $C_2$, so that again the estimate \eqref{eq:A2} is valid with $ H_h^{-s} $ replaced
by $ H_{sG (h ) } $. 

\begin{figure}
\includegraphics[scale=0.9]{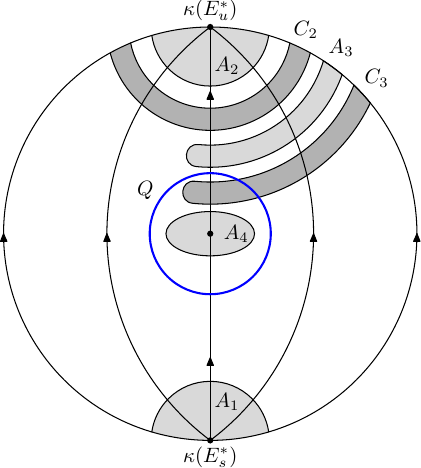}
\caption{A schematic representation of the flow on $ \overline T^* X $.
Different regions (we denote by $ \bullet_j $ the region of microlocalization 
of $ \bullet_j $; control in $ C_j $ is needed for the estimate in $A_j$) 
in which different propagation 
results are applied: for $ A_1 $ we use the radial source
estimates (Proposition \ref{2.6});  for $ A_2 $ the 
radial sink estimates (Proposition \ref{2.7});
for $ A_3 $ the
standard propagation result (Proposition \ref{2.5})
applied to the conjugated operator;
for $ A_4 $ we use elliptic estimates (Proposition \ref{2.4}).
Since for $ A_1 $ and $ A_4 $ we do not need any initial control
(given by $ C_j $), $ C_3 $ can be dynamically 
controlled by regions of the type $ A_1 $ and 
$ A_4 $, and $ C_2 $ is a region of the type $ A_2 $, a partition of unity provides a global 
estimate \eqref{eq:global}.}
\label{f}
\end{figure}

We now have to consider the case of $ A_3\in \Psi^0_h $ microlocalized away from 
$ \kappa (E_u^*)\cup\kappa(E_s^*) $. For that we need to see that the
conjugated operator satisfies the assumptions of the 
Duistermaat--H\"ormander propagation theorem (Proposition \ref{2.5}).
As in \eqref{eq:conj} we have
\[ \widetilde P_{\epsilon, s} ( \lambda ) := e^{ s G ( h ) } \widetilde 
P_{\epsilon} ( \lambda )e^{-sG(h)}=\widetilde P_\epsilon(\lambda)  - i h s \frac{i} {h} [ G ( h ) , \widetilde P_\epsilon ( \lambda )  ]
+ \mathcal O (h^2 )_{ \Psi_{h}^{-1+} } ,
\]
where now, as the operators $ \widetilde P_{\epsilon } ( \lambda) $ and
$ G ( h ) $ are uniformly bounded in $ \Psi^{1}_h $ and 
$ \Psi^{0+}_h $, respectively, the error is in $ \Psi_h^{-1+}$. Hence we
have 
\[ \sigma_h ( P_{ \epsilon , s } ( \lambda ) ) = p_{\epsilon, s} ( x, \xi)  - 
i q_{\epsilon, s } ( x, \xi ) \mod( h \Psi^{0}_h )  \]
where,  with $ p ( x, \xi ) := \xi ( V_x ) $, away from $\xi=0$ we can take
\begin{equation}
\label{eq:pses}  \begin{split}
& p_{\epsilon, s } ( x ,\xi ) = p ( x, \xi) 
- hs \log | \xi|_g H_{m_G }\left( |\xi|_g\chi_1\Big({\varepsilon^2\over h^2}|\xi|^2_g\Big){\varepsilon\over h}|\xi|_g \right) , 
\\
& q_{\epsilon, s} ( x ,\xi) = \chi ( |\xi|_g^2 ) + 
|\xi|_g\chi_1\Big({\varepsilon^2\over h^2}|\xi|^2_g\Big){\varepsilon\over h}|\xi|_g 
- hs \log|\xi|_g H_p m_G ( x, \xi ) \geq 0 . 
\end{split} \end{equation}

We note that $ \tilde p := p_{\epsilon, s } = p + 
\mathcal O ( h)_{ S^{0+}} $ satisfies 
the assumptions of Proposition \ref{2.5} with $ \delta = 1$
and $ q_{\epsilon , s } \leq 0 $. Hence the propagation estimate\eqref{e:hyperbolic-est}
applies.  

As in the proof of~\cite[Proposition 3.4]{dz}, combining~\eqref{eq:A1}, \eqref{eq:A2}, 
Proposition~\ref{2.5}, and the elliptic estimate (Proposition 
\ref{2.4})
we obtain uniformly in $\epsilon$,
\begin{gather}
\label{eq:global}
\begin{gathered}\| u \|_{ H_{s G( h ) } } \leq C h^{-1} \| \widetilde P_{ \epsilon } ( \lambda ) u \|_{
H_{s G ( h ) } } + {\mathcal O} ( h^\infty ) \| u \|_{ H_{h}^{-N} } , 
\ \ 0 < h <  h_0( R ) \\ 
s > s_0 ( R ),  \ \  \lambda \in D ( 0 , R ) , \ \ 
0 < \epsilon \leq h ,
\end{gathered}
\end{gather}
for any $ N $ and that implies \eqref{eq:rad}, finishing the proof.
\end{proof}

We now fix $h<h_0$
and apply Lemma \ref{l:rad} to \eqref{e:houston}. That and \eqref{eq:houston}
give
$$
\|u\|_{H_{sG}(h)}\leq Ch^{-N}\|f\|_{H_{sG}(h)}+\mathcal O(h^{-N}
\varepsilon^\infty)\|u\|_{H_{sG}(h)}
$$
and the $\mathcal O(h^{-N}\varepsilon^\infty)$ can be absorbed into the left-hand side
for $\varepsilon/h$ small enough.

We summarize the result of this section in
\begin{prop}
\label{p:Qeps}
Let $ P_\epsilon $ be given by \eqref{eq:Peps} and $ Q $ by \eqref{eq:defQ}. 
Suppose that $ \lambda \in D ( 0 , R ) $ and that $ 0 \leq \epsilon \leq h/C_0 $. 
Then there exist $ h_0 = h_0 ( R )$, $ s_0 = s_0 ( R ) $, 
(independent of $ \epsilon $) such that for $ 0 < h < h_0 $ and $ s > s_0 ( R ) $
\[   h P_\epsilon - i Q - h \lambda : H_{s G ( h ) }^2 \to H_{s G ( h ) } , \]
is invertible and for some constants $ C $ and  $N$ independent of $ \epsilon $, 
\begin{equation}
\label{eq:Qeps}
\| ( h P_\epsilon - i Q - h \lambda )^{-1} \|_{ H_{ s G ( h ) } \to 
H_{s G ( h ) } } \leq C h^{-N} . 
\end{equation}
\end{prop}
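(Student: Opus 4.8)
The plan is to deduce the proposition from Lemmas~\ref{l:ell} and~\ref{l:rad} by a standard a priori estimate plus a Fredholm argument, keeping every constant uniform in~$\epsilon$. Fix $\lambda\in D(0,R)$, $0<h<h_0(R)$, $s>s_0(R)$, and $0\le\epsilon\le h/C_0$, and set $f:=(hP_\epsilon-iQ-h\lambda)u$ for $u$ in the relevant domain. Because $\epsilon<h/C_0$ forces $(1-\chi_2(-\epsilon^2\Delta_g))Q=0$, the elliptic estimate of Lemma~\ref{l:ell} together with~\eqref{eq:1h} yields~\eqref{eq:ell2.1}, which bounds $(1-\chi_1(-\epsilon^2\Delta_g))u$ and $\epsilon^2\Delta_g(1-\chi_1(-\epsilon^2\Delta_g))u$ in $H_{sG(h)}$ by $Ch^{-N}\epsilon\|f\|_{H_{sG(h)}}$, up to an $\mathcal O(h^{-N}\epsilon^\infty)\|u\|_{H_{sG(h)}}$ error. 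Writing $\epsilon h\Delta_g=(h/\epsilon)\cdot\epsilon^2\Delta_g$, the factor $\epsilon$ gained in Lemma~\ref{l:ell} compensates the factor $h/\epsilon$, so the right-hand side $F$ of~\eqref{e:houston} satisfies $\widetilde P_\epsilon(\lambda)u=F$ with $\|F\|_{H_{sG(h)}}\le Ch^{-N}\|f\|_{H_{sG(h)}}+\mathcal O(h^{-N}\epsilon^\infty)\|u\|_{H_{sG(h)}}$, which is~\eqref{eq:houston}. Applying the radial estimate of Lemma~\ref{l:rad} to $\widetilde P_\epsilon(\lambda)u=F$ (legitimate since $H^2_{sG(h)}\subset D_{sG(h)}$ for $\epsilon>0$) and combining gives
\[
\|u\|_{H_{sG(h)}}\le Ch^{-N}\|f\|_{H_{sG(h)}}+\mathcal O(h^{-N}\epsilon^\infty)\|u\|_{H_{sG(h)}},
\]
and the remainder is absorbed into the left-hand side once $C_0=C_0(R)$ is taken large enough. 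This produces the uniform a priori bound $\|u\|_{H_{sG(h)}}\le Ch^{-N}\|f\|_{H_{sG(h)}}$; in particular $hP_\epsilon-iQ-h\lambda$ has trivial kernel, with the constant depending only on $R$.

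To pass from injectivity to invertibility and to the resolvent bound~\eqref{eq:Qeps}, I separate the cases $\epsilon>0$ and $\epsilon=0$. For $\epsilon>0$ the operator is genuinely second order; conjugating it by $e^{sG(x,hD)}$ (cf.~\eqref{eq:conj}) gives a classical elliptic operator of order~$2$, so, as observed at the beginning of~\S\ref{s:mimoa}, $hP_\epsilon-iQ-h\lambda:H^2_{sG(h)}\to H_{sG(h)}$ is Fredholm of index~$0$ by~\cite[Theorem~19.2.1]{ho3}, the index being independent of $\epsilon$. Triviality of the kernel then gives bijectivity, and the a priori bound bounds the inverse by $Ch^{-N}$ with $C,N$ depending only on $R$. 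For $\epsilon=0$ the operator is first order and is to be read on its graph domain $D_{sG(h)}\supset H^2_{sG(h)}$; its invertibility and the bound $\mathcal O(1/h)\le Ch^{-N}$ are precisely~\eqref{eq:Qrs} with $r=0$ (equivalently, the analytic Fredholm theory of~\S\ref{s:defPR} supplies the Fredholm property and the a priori bound supplies injectivity). Since $h_0,s_0,C,N$ depend only on $R$, this is the assertion of the proposition.

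I expect the genuine difficulties to reside entirely in Lemmas~\ref{l:ell} and~\ref{l:rad}; the assembly above has only bookkeeping subtleties. The main one is to check that the $\epsilon$ gained in Lemma~\ref{l:ell} truly offsets the $h/\epsilon$ arising when the damping term $\epsilon h\Delta_g(1-\chi_1(-\epsilon^2\Delta_g))u$ is rewritten through $\epsilon^2\Delta_g(1-\chi_1(-\epsilon^2\Delta_g))u$, so that replacing $hP_\epsilon-iQ-h\lambda$ by the order-one operator $\widetilde P_\epsilon(\lambda)$ costs only a power of $h$ and nothing in $\epsilon$. A second point is to make sure that the threshold $C_0$ and all constants can be chosen depending only on $R$, through the constants of the two lemmas, so that~\eqref{eq:Qeps} is genuinely $\epsilon$-uniform; a third, minor, point is that at $\epsilon=0$ the mapping property must be read with the graph domain $D_{sG(h)}$ rather than $H^2_{sG(h)}$, while the resolvent in~\eqref{eq:Qeps} acts between copies of $H_{sG(h)}$ at both ends, exactly as stated.
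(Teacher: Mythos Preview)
Your proposal is correct and follows essentially the same route as the paper: combine the elliptic estimate of Lemma~\ref{l:ell} (via~\eqref{eq:ell2.1}) with the identity~\eqref{e:houston} to obtain~\eqref{eq:houston}, feed this into Lemma~\ref{l:rad}, absorb the $\mathcal O(h^{-N}\epsilon^\infty)$ remainder for $\epsilon/h$ small, and then upgrade the resulting a priori bound to invertibility using the Fredholm index-zero property noted at the start of~\S\ref{s:mimoa} for $\epsilon>0$ and~\eqref{eq:Qrs} for $\epsilon=0$. Your explicit tracking of the $\epsilon$ gained in Lemma~\ref{l:ell} against the $h/\epsilon$ lost when passing from $\epsilon^2\Delta_g$ to $\epsilon h\Delta_g$ is exactly the ``we see immediately'' step the paper leaves implicit between~\eqref{eq:ell2.1} and~\eqref{eq:houston}.
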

\noindent\textbf{Remark}. Same statement is true if we replace the spaces $H_{sG(h)}$ with
$H^r_{sG(h)}$ for some fixed $r$. Indeed, this amounts to replacing
$sm_G$ by $sm_G+r$ in the weight $G$. The proof of Lemma~\ref{l:ell} remains unchanged.
As for Lemma~\ref{l:rad}, its proof uses the inequality $H_p m_G\leq 0$ (which is still true),
as well as the fact that $H_{sG(h)}$ is equivalent to $H^s_h$ microlocally near $E_s^*$ and
to $H^{-s}_h$ microlocally near $E_u^*$. The space $H^r_{sG(h)}$ is equivalent
to $H^{r+s}_h$ near $E_s^*$ and to $H^{r-s}_h$ near $E_u^*$; for $s$ large enough depending
on $r$ and $R$, Lemma~\ref{l:rad} still holds.

\section{Stochastic approximation of Pollicott--Ruelle resonances}
\label{s:stPR}

In this section we prove Theorem~\ref{th:1}.
Using Proposition \ref{p:Qeps} we see that for $ \lambda \in D ( 0 , R ) $, 
we have the following expression for the meromorphic continuation of
the resolvent of $ P_\epsilon $:
\begin{equation}
\label{eq:merPeps}
( P_\epsilon - \lambda )^{-1} = 
h( h P_\epsilon - i Q - h \lambda)^{-1}  ( I + K ( \lambda,\epsilon ) )^{-1} 
: H_{ s G  } \to H_{s G }, 
\end{equation}
where
\begin{equation}
  \label{e:defKle}
K(\lambda,\varepsilon ) : =iQ( hP_\varepsilon-iQ- h\lambda)^{-1}:H_{sG}\to H_{sG},
\end{equation}
is of trace class and depends holomorphically on $ \lambda $
-- see \eqref{eq:QtoC}. Here $ 
0 < h < h_0 $,  $ 0 \leq \epsilon \leq \varepsilon_0 := h/C_0$ and $ s > s_0$ with $ h_0 $ and 
$ s_0 $ depending on $ R$. We fix $h$ and drop it in the notation for $ H_{s G }$.

As in Proposition \ref{p:detPR} we see that the 
spectrum of $ P_\epsilon $ in $ D ( 0 , R ) $ is given (with multiplicities)
by the zeros of the following Fredholm determinant:
\begin{equation}
D_R(\lambda , \epsilon ):={\det}_{H_{sG}} (I+K(\lambda , \epsilon)).
\end{equation}
Note that, since $Q$ is compactly microlocalized, $K(\lambda,\epsilon)$ acts
$H_{sG}\to H^N$ for all $N$. It follows that $D_R(\lambda,\epsilon)$ is equal to
the $H^N$ determinant of $I+K(\lambda,\epsilon)$ for each $N\geq s$.

To analyze the determinant $D_R(\lambda,\epsilon)$, we apply the following two lemmas. 
We use the notation $ f \in C^1 ( [ a , b ]) $ to mean that 
$f$ and its derivative $f'$ are continuous in $ [ a, b ] $;
here $f'(a),f'(b)$ are the left and right derivatives
of $ f $ at those points. By induction we then define $ C^k ( [ a, b ] ) $
and $ C^\infty ( [ a , b] ) $.

\begin{lemm}
\label{l:Keps}
Let $ R $ and $ h $ be fixed so that \eqref{eq:merPeps} is valid.
Then for every $ k $ there exists $ s_1 = s_1 (k, R ) $ such that for $ s \geq s_1 $,
\begin{equation}
\label{eq:Kla}   K ( \lambda , \epsilon ) \in C^k \left( [0, \epsilon_0 ]_\epsilon , 
\Hol\big( D ( 0 , R )_\lambda , \mathcal L ^1 ( H^s , H^s )\big)\right)  ,\end{equation}
where $ H^s = H^s ( X) $ are Sobolev spaces and $\mathcal L^1$ denotes 
the space of trace class operators.
\end{lemm}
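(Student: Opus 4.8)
The plan is to prove~\eqref{eq:Kla} by differentiating the resolvent identity
for $K(\lambda,\epsilon)=iQ(hP_\epsilon-iQ-h\lambda)^{-1}$ in $\epsilon$ and then estimating each term,
using Proposition~\ref{p:Qeps} as the basic invertibility and norm bound that is uniform in~$\epsilon$.
First I would record that, since $P_\epsilon=P_0+i\epsilon\Delta_g$ depends polynomially
(indeed linearly) on $\epsilon$, the operator $R_\epsilon(\lambda):=(hP_\epsilon-iQ-h\lambda)^{-1}$,
viewed between suitable weighted spaces, is formally differentiable with
$\partial_\epsilon R_\epsilon(\lambda)=-R_\epsilon(\lambda)(ih\Delta_g)R_\epsilon(\lambda)$,
and more generally $\partial_\epsilon^k R_\epsilon(\lambda)=(-1)^k k!\,R_\epsilon(\lambda)(ih\Delta_g R_\epsilon(\lambda))^k$.
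The issue is that each factor of $\Delta_g$ costs two derivatives, so to make sense of the
$k$-th derivative as a bounded (indeed trace class) operator on a fixed Sobolev scale one
needs to work on the weighted Sobolev spaces $H^r_{sG(h)}$ of~\eqref{eq:Hrs}, which by~\eqref{eq:Hrs1}
absorb the loss of derivatives, and then invoke the Remark after Proposition~\ref{p:Qeps}
that the uniform bound~\eqref{eq:Qeps} holds on $H^r_{sG(h)}$ provided $s>s_0(r,R)$.

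The key steps, in order, are as follows. (1) For fixed $r$ and $s>s_0(r,R)$, use the Remark
after Proposition~\ref{p:Qeps} to get $R_\epsilon(\lambda)=\mathcal O(h^{-N}):H^r_{sG}\to H^r_{sG}$
and, composing with~\eqref{eq:Hrs1}, also $R_\epsilon(\lambda)=\mathcal O(h^{-N}):H^r_{sG}\to H^{r+1}_{sG}$,
uniformly in $\epsilon\in[0,\epsilon_0]$ and holomorphically in $\lambda\in D(0,R)$;
the holomorphy in $\lambda$ is immediate from the Neumann-series expansion and the fact
that the poles are cancelled by the determinant construction of~\S\ref{s:defPR}, but here
we only need holomorphy of $R_\epsilon(\lambda)$ as an operator (which follows from Proposition~\ref{p:Qeps}
on the pole-free region, and $K(\lambda,\epsilon)$ in~\eqref{e:defKle} is the genuinely
holomorphic object since it does not involve inverting $I+K$). (2) Observe $ih\Delta_g:H^{r+2}_{sG}\to H^r_{sG}$
is bounded by~\eqref{eq:Hrs1}, so the chain
$H^{s}\hookrightarrow H^r_{sG}\xrightarrow{R_\epsilon}H^r_{sG}\xrightarrow{ih\Delta_g R_\epsilon}\cdots$
requires $R_\epsilon$ to gain $2$ derivatives at each stage; iterate~\eqref{eq:Hrs1} and Proposition~\ref{p:Qeps}
on a ladder $H^{r_0}_{sG}\to H^{r_0-2}_{sG}\to\cdots$ with $r_0$ chosen so that after $k$ steps we
land back in a space continuously embedded in $H^s$ — this is exactly where $s_1=s_1(k,R)$ enters,
since we need $s>s_0(r,R)$ for every $r$ appearing on the ladder, i.e.\ $s$ large in terms of $k$.
(3) Justify that the formal derivative is the actual derivative: write the finite-difference quotient
$\epsilon^{-1}(R_{\epsilon+\epsilon'}-R_\epsilon)=-R_{\epsilon+\epsilon'}(ih\Delta_g)R_\epsilon$
from the second resolvent identity $R_{\epsilon+\epsilon'}-R_\epsilon=-R_{\epsilon+\epsilon'}(i\epsilon' h\Delta_g)R_\epsilon$,
and let $\epsilon'\to 0$ using continuity of $\epsilon\mapsto R_\epsilon$ in operator norm on the
relevant pair of spaces (again from Proposition~\ref{p:Qeps} plus the same resolvent identity);
differentiability at the endpoint $\epsilon=0$ uses one-sided difference quotients, giving the
one-sided $C^k([0,\epsilon_0])$ statement as defined in the text. (4) Finally, upgrade from boundedness
to trace class: since $Q=\chi(-h^2\Delta_g)$ is compactly microlocalized, $\partial_\epsilon^k K(\lambda,\epsilon)
=(-1)^k k!\,iQ\,R_\epsilon(ih\Delta_g R_\epsilon)^k$ maps $H^s\to H^M$ for every $M$ (the factor $Q$
smooths), and on a compact manifold an operator $H^s\to H^M$ with $M-s$ larger than $\dim X$
is trace class on $H^s$ by Weyl asymptotics; choosing $s_1$ possibly larger ensures this. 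Holomorphy
in $\lambda$ of each $\partial_\epsilon^k K(\lambda,\epsilon)$ in the trace norm follows from the
product rule applied to the holomorphic family $K(\lambda,\epsilon)$ together with the fact
that a locally bounded family of trace-class operators that is holomorphic in operator norm is
holomorphic in trace norm.

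The main obstacle is bookkeeping the derivative loss cleanly: each $\epsilon$-derivative inserts
a $\Delta_g$ costing two orders, so one must set up the ladder of weighted Sobolev spaces
$H^r_{sG(h)}$ and check that Proposition~\ref{p:Qeps}'s uniform bound survives on each rung,
which forces $s\geq s_1(k,R)$ and is precisely the content of the Remark following that proposition.
Everything else — the resolvent identities, passing from difference quotients to derivatives,
and the trace-class/holomorphy upgrades — is routine once the uniform-in-$\epsilon$ resolvent
bounds on the weighted scale are in hand.
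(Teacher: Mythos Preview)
Your proposal is correct and follows essentially the same route as the paper: differentiate via the resolvent identity, absorb the $\Delta_g$-loss by descending a ladder of weighted spaces $H^r_{sG}$ using the Remark after Proposition~\ref{p:Qeps}, justify the derivative through the finite-difference identity, and finally use that $Q$ is compactly microlocalized to land in trace class. One small caveat: the claim in step~(1) that $R_\epsilon(\lambda)=\mathcal O(h^{-N}):H^r_{sG}\to H^{r+1}_{sG}$ is neither justified by~\eqref{eq:Hrs1} (since $R_\epsilon$ is not a pseudodifferential operator in the standard class) nor needed---you correctly abandon it in step~(2) and descend the ladder instead, which is exactly what the paper does (there each differentiation costs four orders rather than two, because passing to the limit $\epsilon'\to\epsilon$ in the difference quotient uses continuity of $R_{\epsilon'}$, itself established only with a two-order loss).
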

\begin{proof}
We first show that the identity
\begin{equation}
  \label{e:diffid}
\partial_{\varepsilon} (hP_\epsilon-iQ-h\lambda)^{-1}=-ih(hP_\epsilon-iQ-h\lambda)^{-1}\Delta_g
(hP_\epsilon-iQ-h\lambda)^{-1}
\end{equation}
is true for $\varepsilon\in [0,\varepsilon_0]$
in the space $\Hol(D(0,R),\mathcal B(H_{sG}^r,H_{sG}^{r-4}))$,
for each $r$ and for $s$ large enough depending on $R$ and $r$. Here
$\mathcal B$ stands for the class of bounded operators with operator norm.
Indeed, for each $\varepsilon,\varepsilon'\in [0,\varepsilon_0]$,
\begin{equation}
  \label{e:resid}
\begin{gathered}
{(hP_\epsilon-iQ-h\lambda)^{-1}-(hP_{\epsilon'}-iQ-h\lambda)^{-1}\over\epsilon-\epsilon'}\\
=-ih(hP_\epsilon-iQ-h\lambda)^{-1}\Delta_g(hP_{\epsilon'}-iQ-h\lambda)^{-1}
\end{gathered}
\end{equation}
where the right-hand side of the equation is uniformly bounded in $\varepsilon,\varepsilon'$
as an operator $H_{sG}^r\to H_{sG}^{r-2}$. Here we used 
\[ (hP_{\epsilon'}-iQ-h\lambda)^{-1} \in \mathcal B ( H_{sG}^r, H_{sG}^r), \ \
(hP_\epsilon-iQ-h\lambda)^{-1} \in \mathcal B ( H_{sG}^{r-2}, H_{sG}^{r-2}) , \]
(see Proposition~\ref{p:Qeps} and the remark following it)
and the fact that 
$\Delta_g$ is bounded $H_{sG}^r\to H_{sG}^{r-2}$. Now, \eqref{e:resid}
implies that $(hP_\epsilon-iQ-h\lambda)^{-1}$ is Lipschitz (and thus continuous)
as an operator $H_{sG}^r\to H_{sG}^{r-2}$. Passing to the limit $\epsilon'\to\epsilon$
in~\eqref{e:resid}, we obtain~\eqref{e:diffid} in the class
$\mathcal B(H_{sG}^r,H_{sG}^{r-4})$. Holomorphy in $\lambda$ follows automatically
from the holomorphy of each of the operators involved.

Iterating~\eqref{e:diffid}, we see that for each $r$, each $k>0$, and for $s$ large enough
depending on $R,r$ and $k$,
$$
(hP_\epsilon -iQ-h\lambda)^{-1}\in C^k\big([0,\epsilon_0]_\epsilon,\Hol\big(D(0,R),\mathcal B(H_{sG}^r,
H_{sG}^{r-4k})\big)\big).
$$
To obtain \eqref{eq:Kla} we recall 
the definition~\eqref{e:defKle} of $K(\lambda,\epsilon)$, take $ r = 0 $, 
note that $H^s$ embeds into $H_{sG}$ and that the operator
$Q$ is compactly microlocalized and thus of trace class $H_{sG}^{-4k}\to H^s$.
\end{proof}

\begin{lemm}
\label{l:dets}
Suppose that $ \{ X_j \}_{ j=0}^\infty $ is a nested family of Hilbert spaces, $ X_{j+1} 
\subset X_j $. Let  
\begin{equation}
\label{eq:Keps1}  K ( \epsilon ) : X_j \to \bigcap_{\ell=0}^\infty X_{\ell} ,
\end{equation}
be a family of operators such that 
$ K \in C^k ( [ 0 , \epsilon_0 ], {\mathcal L }^1 ( X_k , X_k ) ) $.
Then 
\[  F ( \epsilon ) := {\det}_{ X_0 } ( I + K ( \epsilon ) ) \in C^\infty 
( [ 0 , \epsilon_0 ] ) . \]
\end{lemm}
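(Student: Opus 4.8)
The plan is to reduce the statement to the smoothness of a finite-rank approximation plus a uniform control of the trace-class tail, then differentiate the Fredholm determinant using the standard formula $\partial_\epsilon \log\det_{X_0}(I+K(\epsilon)) = \tr\big((I+K(\epsilon))^{-1}\partial_\epsilon K(\epsilon)\big)$. First I would record the hypotheses in usable form: by assumption $K\in C^k([0,\epsilon_0],\mathcal L^1(X_k,X_k))$ for every $k$, and moreover $K(\epsilon)$ maps $X_j$ into $\bigcap_\ell X_\ell$, so in particular $K(\epsilon)$ and all its $\epsilon$-derivatives (up to order $k$) are trace-class on $X_0$, since $X_k\hookrightarrow X_0$ continuously and the inclusion composed with a trace-class operator is trace-class. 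Thus $F(\epsilon)=\det_{X_0}(I+K(\epsilon))$ is well-defined and, by continuity of $K$ into $\mathcal L^1(X_0,X_0)$ together with the Lipschitz continuity of the determinant on bounded subsets of $\mathcal L^1$, already continuous on $[0,\epsilon_0]$.

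Next I would establish the $C^1$ statement. The key identity is that for a $C^1$ family $\epsilon\mapsto K(\epsilon)\in\mathcal L^1(X_0,X_0)$ with $I+K(\epsilon)$ invertible, $F$ is $C^1$ with $F'(\epsilon)=F(\epsilon)\,\tr\big((I+K(\epsilon))^{-1}K'(\epsilon)\big)$; this is the Jacobi formula for Fredholm determinants (see e.g.\ Gohberg--Krein or \cite[(C.4.6)]{res}), and it applies because $K'(\epsilon)\in\mathcal L^1(X_0,X_0)$ by the embedding argument above and $(I+K(\epsilon))^{-1}$ is bounded on $X_0$. Here one subtlety must be addressed: the invertibility of $I+K(\epsilon)$ on $X_0$ holds only away from the zeros of $F$, but on a neighborhood of such a zero $\lambda$ we are not working in a $\lambda$-parameter at all — $\lambda$ is already fixed (or rather, we have suppressed $\lambda$ and $h$), so we simply choose, for each fixed $\lambda\in D(0,R)$ and $h<h_0$ at which we wish to evaluate, the determinant, and note that $F$ as a function of $\epsilon$ alone may vanish at isolated points; but at such points we can still differentiate using the representation of $F(\epsilon)$ as an $X_k$-determinant for large $k$ and the analytic structure — alternatively, and more cleanly, observe that $\log F$ is only needed locally where $F\neq 0$, and at a zero $\epsilon_0$ of $F$ one instead writes $F(\epsilon)=\det_{X_0}\big((I+K(\epsilon_0))+(K(\epsilon)-K(\epsilon_0))\big)$ and uses that $(I+K(\epsilon_0)+T)$ has a determinant which is a polynomial-in-traces expression in $T$, each coefficient smooth in $T\in\mathcal L^1(X_0,X_0)$; since $\epsilon\mapsto K(\epsilon)-K(\epsilon_0)$ is $C^k$ into $\mathcal L^1(X_0,X_0)$, $F$ is $C^k$ near $\epsilon_0$ as well.

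For the general statement ($F\in C^\infty$) I would iterate. Having $F\in C^1$ with $F'(\epsilon)=F(\epsilon)\operatorname{tr}\!\big((I+K(\epsilon))^{-1}K'(\epsilon)\big)$ on the open set $\{F\neq 0\}$, I differentiate the trace: $(I+K(\epsilon))^{-1}$ is $C^{k-1}$ into $\mathcal B(X_0,X_0)$ (differentiate the identity $(I+K)^{-1}=I-(I+K)^{-1}K$, using $K\in C^k$ into $\mathcal L^1\subset\mathcal B$), and $K'(\epsilon)\in C^{k-1}$ into $\mathcal L^1(X_0,X_0)$ — but here I should be careful to land in $\mathcal L^1$, not merely $\mathcal B$, so that the trace makes sense after differentiation; this is where the hypothesis $K(\epsilon):X_j\to\bigcap_\ell X_\ell$ is used essentially, because it lets me write $K'(\epsilon)=(K'(\epsilon)\text{ as a map }X_0\to X_{k})\circ(\text{embedding})$ and so each $\partial_\epsilon^j K(\epsilon)$ factors through a highly smoothing operator, staying trace-class. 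So $F'$ is $C^{k-1}$, hence $F\in C^k$, and since $k$ was arbitrary, $F\in C^\infty([0,\epsilon_0])$; on the (finite or isolated) set where $F$ vanishes, the local polynomial-in-$\mathcal L^1$ argument of the previous paragraph gives $C^k$ directly, and these local descriptions patch. The main obstacle, and the point requiring genuine care rather than routine work, is precisely this bookkeeping at the zeros of $F$ together with ensuring that every $\epsilon$-derivative of $K$ remains trace-class on $X_0$ — the nested-space hypothesis is exactly tailored to make this work, since a $C^k$ family valued in $\mathcal L^1(X_k,X_k)$ that additionally smooths $X_0\to X_k$ yields a $C^k$ family valued in $\mathcal L^1(X_0,X_0)$.
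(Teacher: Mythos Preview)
Your argument has a genuine gap. You repeatedly assert that $\partial_\epsilon^j K(\epsilon)\in\mathcal L^1(X_0,X_0)$, justifying this by the smoothing property~\eqref{eq:Keps1} together with ``inclusion composed with trace-class is trace-class''. But the hypothesis only gives $\partial_\epsilon^j K(\epsilon)\in\mathcal L^1(X_k,X_k)$ for $j\le k$, and the smoothing property~\eqref{eq:Keps1} is stated for $K(\epsilon)$ itself, not for its $\epsilon$-derivatives. There is no reason the difference quotients, which converge in $\mathcal L^1(X_k,X_k)$, should converge (or even stay bounded) in $\mathcal L^1(X_0,X_0)$; an operator in $\mathcal L^1(X_k,X_k)$ need not even act on the larger space $X_0$. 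Your later sentence ``write $K'(\epsilon)=(K'(\epsilon)\text{ as a map }X_0\to X_k)\circ(\text{embedding})$'' presupposes exactly what is not given.

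The paper's proof avoids this by a single observation you did not make: because $K(\epsilon)$ maps every $X_j$ into $\bigcap_\ell X_\ell$, the determinant $\det_{X_j}(I+K(\epsilon))$ is \emph{independent of $j$}. Hence to show $F\in C^j$ one may compute the determinant on $X_j$, where the hypothesis $K\in C^j([0,\epsilon_0],\mathcal L^1(X_j,X_j))$ applies directly, and use the Jacobi formula there. This sidesteps entirely the need to transport derivatives of $K$ down to $X_0$.

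Your treatment of the zeros of $F$ is also more elaborate than necessary (and the ``polynomial-in-traces'' description is not literally correct for Fredholm determinants). The paper handles this in one line: the family $\epsilon\mapsto F(\epsilon)(I+K(\epsilon))^{-1}$ is a continuous family of uniformly bounded operators on $X_j$ even across the zeros of $F$ (this is the standard adjugate/Plemelj--Smithies fact, see e.g.\ \cite[(B.7.4)]{res}), so $\partial_\epsilon F(\epsilon)=\tr_{X_j}\big([F(\epsilon)(I+K(\epsilon))^{-1}]\,\partial_\epsilon K(\epsilon)\big)$ is manifestly continuous, with no case distinction needed.
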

\begin{proof} 
Because of \eqref{eq:Keps1} we see that $ 
{\det}_{X_j } ( I + K ( \epsilon ) ) $ is independent of $ j$ and
hence we only need to prove that 
$ {\det}_{X_j } ( I + K ( \epsilon ) )  \in C^j 
( [ 0 , \epsilon_0 ]_\epsilon ) 
$,
for any $ j $.  For $ j = 1 $ we note that
$  \partial_\epsilon F ( \epsilon ) = F ( \epsilon ) \tr_{X_1}  \left(( I + K ( \epsilon))^{-1}  \partial_\epsilon K ( \epsilon ) \right)$.
The operators  $  \epsilon \mapsto F ( \epsilon ) ( I + K( \epsilon ))^{-1} $ 
form a continuous family of uniformly bounded operators (see for instance
\cite[(B.7.4)]{res}). Hence, 
$ | \partial_\epsilon F ( \epsilon ) | \leq 
C \| \partial_\epsilon K ( \epsilon ) \|_{ \mathcal L^1 ( X_1, X_1 ) }$.
Higher order derivatives are handled similarly and smoothness of $ F $
follows.
\end{proof}

Applying this Lemma with $ X_j = H^{s_1 ( j , R )  }$ where $ s_1 $ comes
from Lemma \ref{l:Keps}, we see that 
$ \epsilon \mapsto D_R ( \lambda , \epsilon )  $ is a smooth
function of $ \epsilon \in [ 0 , \epsilon_0 ]$ with values in $ 
{\rm{Hol}} ( D  ( 0 , R ) ) $.  Rouch\'e's theorem 
implies that the zeros are continuous in $\epsilon$ up to $ 0 $, proving Theorem~\ref{th:1}. If $ \mu_0 $ is a simple zero of $ D_R ( \lambda, 0 ) $ then 
for $ 0 \leq \epsilon < \epsilon_1$,  $ D_R ( \lambda , \epsilon ) $
has a unique zero, $ \mu ( \epsilon ) $,
close to $ \mu_0 $. Smoothness of $ D_R $ in $ \epsilon $ shows that
\[  \mu ( \epsilon ) \in C^\infty ( [ 0 , \epsilon_1 ] ) . \]

When the zeros are not simple (in particular, when the eigenvalues
of $ P_0 $ are not semisimple) the situation is potentially quite 
complicated.
 However we have smoothness of spectral projectors:
\begin{prop}
\label{p:specP}
Suppose that $ \mu_0\in D(0,R-1) $ is an eigenvalue of 
$ P_0 : H_{ s G } ( X ) \to H_{s G } ( X) $, $ s \geq s_0(R) $,
and that multiplicity of $ \mu_0 $ is $ m$:
\[  m = \tr \Pi_0 \,, \ \ \  \Pi_0 = \frac 1 { 2 \pi i } \oint_{\gamma_\delta} 
( \lambda - P_0 )^{-1} d\lambda, \]
where $ \gamma_\delta : [0, 2 \pi ) \ni t \to \mu_0 + \delta e^{ it } $, 
and $ \delta $ is small enough.

Then there exists $ \epsilon_0 $ and $ \delta$ such that 
for $ 0 < \epsilon \leq \epsilon_0 $, $ P_\epsilon $ has exactly 
$ m $ eigenvalues in $ D ( \mu_0 , \delta) $:
\begin{equation}
\label{eq:specP1}
\tr \Pi_\epsilon = m , \ \  \Pi_\epsilon := \frac 1 { 2 \pi i } \oint_{\gamma_\delta} 
( \lambda - P_\epsilon )^{-1} d\lambda, \ \ \Pi_\epsilon^2 = \Pi_\epsilon, 
 \end{equation}
and $ \Pi_\epsilon \in C^\infty ( [ 0 , \epsilon_0 ] , 
{\mathcal L }^1 ( C^\infty ( X ) , \mathcal D' ( X) ) $.
More precisely, the projections $ \Pi_\epsilon $ have rank $ m $ and 
for each $ j $ there exists $ s_j $ such that
\begin{equation}
\label{eq:specP}  \Pi_\epsilon \in C^j \big( [0, \epsilon_0 ], {\mathcal L} ( H_{s_j G } , 
H_{ s_j G} )\big) \subset 
C^j \big( [0, \epsilon_0 ], {\mathcal L} ( H^{-s_j } , 
H^{ s_j} ) \big). \end{equation}
\end{prop}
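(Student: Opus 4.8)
The plan is to follow the same route used to prove Theorem~\ref{th:1}: reduce the spectral projection $\Pi_\epsilon$ to a contour integral of the Fredholm resolvent factorization~\eqref{eq:merPeps}, and then extract smoothness in $\epsilon$ from the smoothness of $K(\lambda,\epsilon)$ established in Lemma~\ref{l:Keps}. First I would choose $R$ so that $D(\mu_0,\delta)\subset D(0,R)$ and fix $h<h_0(R)$, $\epsilon_0=h/C_0$, so that Proposition~\ref{p:Qeps} and~\eqref{eq:merPeps} hold uniformly for $\epsilon\in[0,\epsilon_0]$. Substituting~\eqref{eq:merPeps} into the definition of $\Pi_\epsilon$ in~\eqref{eq:specP1} gives
\[
\Pi_\epsilon=-\frac{1}{2\pi i}\oint_{\gamma_\delta} h(hP_\epsilon-iQ-h\lambda)^{-1}(I+K(\lambda,\epsilon))^{-1}\,d\lambda,
\]
and since $D_R(\lambda,\epsilon)$ has only finitely many zeros inside $\gamma_\delta$, for $\delta$ small the only pole of the integrand enclosed is at $\lambda=\mu_0$ when $\epsilon=0$, and the nearby cluster of eigenvalues of $P_\epsilon$ when $\epsilon>0$; by the continuity of zeros (Rouch\'e, as in the proof of Theorem~\ref{th:1}) these stay inside $\gamma_\delta$ for $\epsilon_0$ small enough, so $\tr\Pi_\epsilon=m$ for all $\epsilon\in[0,\epsilon_0]$ and $\Pi_\epsilon^2=\Pi_\epsilon$ is the standard Riesz projection identity.

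Next I would establish the regularity~\eqref{eq:specP}. Fix $j$. By Lemma~\ref{l:Keps} there is $s_1=s_1(j,R)$ such that $K(\lambda,\epsilon)\in C^j([0,\epsilon_0]_\epsilon,\Hol(D(0,R),\mathcal L^1(H^{s_1},H^{s_1})))$; since $D_R(\lambda,\epsilon)\neq 0$ on $\gamma_\delta$ uniformly in $\epsilon$ (the zeros stay strictly inside), the inverse $(I+K(\lambda,\epsilon))^{-1}$ exists and, by the usual Neumann-series/Cramer argument (as in~\cite[(B.7.4)]{res} and the proof of Lemma~\ref{l:dets}), depends $C^j$ on $\epsilon$ and holomorphically on $\lambda\in\gamma_\delta$ with values in $\mathcal B(H^{s_1},H^{s_1})$. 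Combining this with the smoothness of $\epsilon\mapsto (hP_\epsilon-iQ-h\lambda)^{-1}$ in $\mathcal B(H^r_{sG},H^{r-4k}_{sG})$ from the iteration of~\eqref{e:diffid} in the proof of Lemma~\ref{l:Keps}, and using that $Q$ is compactly microlocalized so that $Q(hP_\epsilon-iQ-h\lambda)^{-1}$ gains arbitrarily many derivatives, the integrand is $C^j$ in $\epsilon$ as a family of bounded operators $H_{s_jG}\to H_{s_jG}$ for a suitable $s_j$ (chosen large enough, depending on $j$ and $R$, to absorb all the Sobolev losses). Integrating over the fixed contour $\gamma_\delta$ preserves $C^j$ regularity, giving the first inclusion in~\eqref{eq:specP}; the second inclusion follows from the continuous embeddings $H^{s_j}\hookrightarrow H_{s_jG}$ and $H_{s_jG}\hookrightarrow H^{-s_j}$ (for $s_j$ large enough, as in~\eqref{eq:1h} and the construction of the weights). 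Since $Q$ is smoothing, in fact $\Pi_\epsilon:C^\infty(X)\to C^\infty(X)$ extends to $\mathcal D'(X)\to\mathcal D'(X)$, and the $C^\infty([0,\epsilon_0],\mathcal L^1(C^\infty,\mathcal D'))$ statement follows by taking $j\to\infty$ while noting the trace-class property is inherited from $K$.

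The main obstacle is purely bookkeeping: making sure that a \emph{single} $s_j$ works simultaneously for all the factors in the contour integrand, i.e.\ that the weight loss in~\eqref{e:diffid} (which is $4$ orders per derivative), the Sobolev gain from $Q$, and the trace-class requirement in Lemma~\ref{l:Keps} can all be reconciled, and that the constants $h_0,s_0$ from Proposition~\ref{p:Qeps} do not force an $\epsilon$-dependent choice. All of this is uniform by construction, since Proposition~\ref{p:Qeps} and the remark after it give bounds independent of $\epsilon\in[0,h/C_0]$, and Lemma~\ref{l:Keps} already packages the $\epsilon$-dependence into a fixed scale of Sobolev spaces. One also needs the elementary fact that the finitely many eigenvalues of $P_\epsilon$ near $\mu_0$ cannot escape $D(\mu_0,\delta)$ nor can extra eigenvalues enter, which again follows from Rouch\'e applied to $D_R(\lambda,\epsilon)$ together with its $\epsilon\to 0$ convergence to $D_R(\lambda,0)$ in $\Hol(D(0,R))$; the constancy of $\tr\Pi_\epsilon$ then also follows directly from continuity of the integer-valued function $\epsilon\mapsto\tr\Pi_\epsilon$.
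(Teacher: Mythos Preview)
Your strategy is the same as the paper's: reduce $\Pi_\epsilon$ to a contour integral of the factorization~\eqref{eq:merPeps}, use the determinant analysis for the multiplicity count, and extract $\epsilon$-regularity from Lemma~\ref{l:Keps} and the identity~\eqref{e:diffid}. The multiplicity part and the final embedding step are fine.

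There is, however, a genuine gap in the regularity argument. Your integrand $h(hP_\epsilon-iQ-h\lambda)^{-1}(I+K(\lambda,\epsilon))^{-1}$ is simply $(P_\epsilon-\lambda)^{-1}$, and its $\epsilon$-derivative is $-(P_\epsilon-\lambda)^{-1}\,i\Delta_g\,(P_\epsilon-\lambda)^{-1}$, which maps $H_{sG}=H^0_{sG}$ only into $H^{-2}_{sG}$. Each further derivative loses more orders in the $r$-index of $H^r_{sG}$, and enlarging $s_j$ does \emph{not} compensate for that loss. So the claim that ``the integrand is $C^j$ in $\epsilon$ as a family of bounded operators $H_{s_jG}\to H_{s_jG}$'' is false as stated. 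You invoke the smoothing property of $Q$, but $Q$ does not appear in your integrand; it is buried inside $K$.

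The paper supplies exactly the missing step: it applies the resolvent identity
\[
(P_\epsilon-\lambda)^{-1}=(P_\epsilon-ih^{-1}Q-\lambda)^{-1}-ih^{-1}(P_\epsilon-\lambda)^{-1}\,Q\,(P_\epsilon-ih^{-1}Q-\lambda)^{-1},
\]
so that after integration over $\gamma_\delta$ the first (holomorphic) term drops out and $\Pi_\epsilon$ becomes a contour integral with $Q$ sitting explicitly between the two resolvent factors. Now every Sobolev loss incurred by differentiating either factor in $\epsilon$ is absorbed by $Q$, and the argument of Lemma~\ref{l:Keps} goes through on the fixed space $H_{s_jG}$. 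In your framework the equivalent fix is to write $(I+K)^{-1}=I-K(I+K)^{-1}$, observe that the contribution of $I$ is holomorphic in $\lambda$ and hence integrates to zero, and note that $K=iQ(hP_\epsilon-iQ-h\lambda)^{-1}$ puts $Q$ on the left of the surviving term. This is the same identity written from the other side; either way, exhibiting $Q$ in the integrand is not cosmetic but the point of the proof.
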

\begin{proof}
From the analysis of the determinants, we already know that
there exist $ \epsilon_0 , \delta $ such that for 
$ 0 \leq \epsilon \leq \epsilon_0 $, ${\lambda} \mapsto D_{R } ( \lambda, \epsilon ) 
$ has no zeros on $ | \lambda - \mu_0 | = \delta $ and has exactly $ m $ zeros in 
$ D ( \mu_0 , \delta ) $. Hence the spectral projectors are well defined by 
the formula in \eqref{eq:specP1} and their rank is  equal to $ m$. 
To consider regularity, we choose $ h $ sufficiently small (depending on 
$ R $) and write
$$
\begin{gathered}
(P_\varepsilon-\lambda)^{-1} = (P_\epsilon -ih^{-1}Q-\lambda)^{-1} - 
i h^{-1} (P_\varepsilon-\lambda)^{-1} Q (P_\epsilon -ih^{-1}Q-\lambda)^{-1}.
\end{gathered}
$$
Since the first term is holomorphic in $ \lambda \in D ( 0 , R ) $
we have 
\[ \Pi_\epsilon := -\frac 1 { 2 \pi h } \oint_{\gamma_\delta} 
( \lambda - P_\epsilon )^{-1} 
Q ( P_\epsilon - i h ^{-1} Q - \lambda )^{-1}  d\lambda . \]
Also 
\[  ( \lambda - P_\epsilon )^{-1} = {\mathcal O}_{ R, r, s } ( 1 ) :
H^{r}_{ s G } \to H^r_{ s G } , \ \  s \geq s_0 ( R, r ) , \ \ 
\lambda \in \partial D( \mu_0 , \delta ) . \]
Hence the same argument as in the proof of Lemma \ref{l:Keps}
shows $j$-fold differentiability of $ \Pi_\epsilon $ as bounded operators 
 $ H_{ s_j G } \to H_{ s_j G }$.
\end{proof}

\section{Stochastic stability in the case of contact Anosov flows}
\label{s:con}

We now turn to the proof of Theorem \ref{th:2}. The first result concerns values of $ \epsilon $ larger than $h^2$.
Here we do not need to make the contact assumption on the flow.
\begin{lemm}
\label{l:ell1}
Let $ P_\epsilon $ be given by \eqref{eq:Peps}. 
There exist $ K_0 > 0 $ and 
$ h_0 > 0 $ 
such that for any $ \gamma > 1 $, $ h $ and $ \epsilon $
satisfying 
\[  0 <  K_0 \gamma h^2 < \epsilon 
,  \ \ 0 < h < h_0 ,  \]
we have
\begin{equation}
\label{eq:ell2}
(h P_\epsilon - z)^{-1} = {\mathcal O} \left( \frac1 {\sqrt \epsilon} \right) : L^2 ( X ) 
\to L^2 ( X) , \ \ z \in [ \textstyle{\frac12, \frac32}] - i  [ 0, \gamma h ] . 
\end{equation}
In particular $ h P_\epsilon $ does not have any spectrum 
with $ |z - 1| < \frac12 $ and $ \Im z > - \gamma h $.
\end{lemm}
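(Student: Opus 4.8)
The plan is to prove the resolvent estimate~\eqref{eq:ell2} by an energy (commutator) argument applied directly to the operator $hP_\epsilon-z$ on $L^2(X)$, exploiting the fact that the second-order term $i\epsilon h\Delta_g$ dominates when $\epsilon\gg h^2$. First I would write $hP_\epsilon-z = \tfrac{h}{i}V+i\epsilon h\Delta_g-z$ and, for $u\in H^2(X)$, expand $\langle(hP_\epsilon-z)u,u\rangle_{L^2}$. Using $\Re\langle \tfrac{h}{i}Vu,u\rangle = 0$ (since $V$ is a real vector field generating a flow, $\tfrac1iV$ is formally self-adjoint up to the divergence of $V$ with respect to $d\!\vol_g$, which contributes a zeroth-order term of size $\mathcal O(h)$) and $\langle -\Delta_g u,u\rangle = \|\nabla u\|_{L^2}^2 \geq 0$, I would take real and imaginary parts. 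Writing $z = a - ib$ with $a\in[\tfrac12,\tfrac32]$ and $b\in[0,\gamma h]$, the real part gives $\epsilon h\|\nabla u\|_{L^2}^2 - a\|u\|_{L^2}^2 + \mathcal O(h)\|u\|_{L^2}^2 = \Re\langle(hP_\epsilon-z)u,u\rangle$, while the imaginary part gives $b\|u\|_{L^2}^2 + \mathcal O(h)\|u\|_{L^2}\|\nabla u\|_{L^2}$ (or similar) $= \Im\langle(hP_\epsilon-z)u,u\rangle$.

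The key mechanism is that on the spectral region $|z-1|<\tfrac12$ we have $a\geq \tfrac12$, so from the real part we obtain $\tfrac12\|u\|_{L^2}^2 \leq \epsilon h\|\nabla u\|_{L^2}^2 + \|(hP_\epsilon-z)u\|_{L^2}\|u\|_{L^2} + \mathcal O(h)\|u\|_{L^2}^2$. This is not yet useful because of the $\epsilon h\|\nabla u\|^2$ term with the wrong sign, so the real part alone will not close. The fix is to add a small multiple of the real part equation, controlling $\|\nabla u\|^2$, to the imaginary part. Concretely I would observe that $b\|u\|^2 \geq 0$ is too weak, so instead I combine: from the real part, $\epsilon h\|\nabla u\|_{L^2}^2 \leq a\|u\|^2 + \|(hP_\epsilon-z)u\|\,\|u\| + \mathcal O(h)\|u\|^2 \leq C\|u\|^2 + \|(hP_\epsilon-z)u\|\,\|u\|$. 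Then the error term in the imaginary part, $\mathcal O(h)\|u\|\|\nabla u\|$, is bounded by $\mathcal O(h)\|u\|\cdot(\epsilon h)^{-1/2}\big(C\|u\|^2+\|(hP_\epsilon-z)u\|\|u\|\big)^{1/2}$. Since $\epsilon > K_0\gamma h^2 > K_0 h^2$, we have $h/(\epsilon h)^{1/2} = (h/\epsilon)^{1/2} < K_0^{-1/2}$, so for $K_0$ large this error is a small multiple of $\|u\|^2 + \|(hP_\epsilon-z)u\|\|u\|$ and can be absorbed.

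Hence from the imaginary part I would get $b\|u\|^2 \leq \Im\langle(hP_\epsilon-z)u,u\rangle + C_1\big(\|u\|^2 + \|(hP_\epsilon-z)u\|\|u\|\big)$ for a small constant $C_1$ — but this still has the bad $\|u\|^2$ term. The resolution, and the place I expect the real subtlety, is to not use the imaginary part at all but rather run the real-part argument more carefully: near $|z-1|<\tfrac12$ the coefficient $a$ of $-\|u\|^2$ is bounded \emph{below} by $\tfrac12$ with a \emph{sign}, so the real part reads $\tfrac12\|u\|^2 \leq a\|u\|^2 \leq \epsilon h\|\nabla u\|^2 + \|(hP_\epsilon-z)u\|\|u\| + \mathcal O(h)\|u\|^2$. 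This is backwards in sign; what one actually wants is an upper bound on $\|u\|^2$, so one should instead use $\mp$: rewrite as $-\Re\langle(hP_\epsilon-z)u,u\rangle = a\|u\|^2 - \epsilon h\|\nabla u\|^2 - \mathcal O(h)\|u\|^2 \geq (\tfrac12 - Ch)\|u\|^2 - \epsilon h\|\nabla u\|^2$. This gives $(\tfrac12-Ch)\|u\|^2 \leq \|(hP_\epsilon-z)u\|\,\|u\| + \epsilon h\|\nabla u\|^2$, and now I genuinely need to control $\epsilon h\|\nabla u\|^2$ using the full operator. Here is where I would bring in the imaginary part to extract $\|\nabla u\|$: from $\Im\langle(hP_\epsilon-z)u,u\rangle = b\|u\|^2 + \mathcal O(h)(\text{terms involving }\nabla u)$, the gradient does \emph{not} appear at leading order in the imaginary part either — so actually the gradient is controlled only through the real part, which is circular.

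The correct route, which I would pursue, is the standard one for damped/elliptic operators: test against $(1-\chi(-h^2\Delta_g))u$ versus $\chi(-h^2\Delta_g)u$ to separate high and low frequencies. At frequencies $|\xi|_g \gtrsim 1$ the operator $hP_\epsilon-z$ is semiclassically elliptic in $\Psi^2_h$ (with ellipticity constant $\sim \epsilon/h \gtrsim K_0\gamma \gg 1$), giving control $\|u_{\mathrm{high}}\| \lesssim (h/\epsilon)\|(hP_\epsilon-z)u\| + \mathcal O(h^\infty)\|u\|$ — in fact much better. At bounded frequencies, $i\epsilon h\Delta_g = \mathcal O(\epsilon h)$ in operator norm on the low-frequency subspace, so $hP_\epsilon - z = \tfrac{h}{i}V - z + \mathcal O(\epsilon h)$ there; but $\tfrac{h}{i}V$ is skew-adjoint up to $\mathcal O(h)$, hence $\|(\tfrac{h}{i}V - z)u_{\mathrm{low}}\| \geq |\Im z + \mathcal O(h)|$-type bounds are too weak. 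Instead, on the low-frequency part one uses that $\|(\tfrac{h}{i}V-z)v\|^2 = \|\tfrac hi Vv\|^2 - 2\Re\langle \tfrac hi Vv, zv\rangle + |z|^2\|v\|^2 \geq |z|^2\|v\|^2 - 2|\Re z|\cdot|\langle \tfrac hi Vv,v\rangle| - \dots \geq (\tfrac14 - Ch)\|v\|^2$ using $|z|\geq\tfrac12$ and the near-skew-adjointness. Combining the two frequency ranges and choosing $K_0$ large so that the $\mathcal O(\epsilon h)$ error and the $\mathcal O(h^\infty)$ tails are absorbed, I obtain $\|u\|_{L^2} \leq C\epsilon^{-1/2}\|(hP_\epsilon-z)u\|_{L^2}$, where the loss of $\epsilon^{-1/2}$ rather than $1$ comes from the high-frequency part where the ellipticity gain is $h/\epsilon$ but one must also cover the transition region $|\xi|_g\sim 1$; a careful bookkeeping there, using $\chi(-h^2\Delta_g)\in\Psi^{-\infty}_h$ from the calculus reviewed in~\S\ref{wfs}, yields exactly the $1/\sqrt\epsilon$ bound. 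The main obstacle I anticipate is precisely this frequency-localized bookkeeping: making the splitting, the elliptic estimate at high frequency, and the perturbative estimate at low frequency all uniform in the joint regime $K_0\gamma h^2 < \epsilon$, $0<h<h_0$, $z\in[\tfrac12,\tfrac32]-i[0,\gamma h]$, and tracking why the power of $\epsilon$ is $-1/2$ and not something better or worse. The final sentence of the lemma is then immediate: if $z$ were an eigenvalue of $hP_\epsilon$ with $|z-1|<\tfrac12$ and $\Im z>-\gamma h$, then $z\in[\tfrac12,\tfrac32]-i[0,\gamma h]$ up to shrinking $h_0$, contradicting boundedness of $(hP_\epsilon-z)^{-1}$.
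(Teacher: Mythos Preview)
Your opening instinct—pair $(hP_\epsilon-z)u$ with $u$ and split into real and imaginary parts—is exactly the paper's argument, and it closes in three lines. The reason you abandon it is a bookkeeping error in your second paragraph: the damping term is $i\epsilon h\Delta_g$, with an explicit factor of $i$, so $\langle i\epsilon h\Delta_g u,u\rangle=-i\epsilon h\|\nabla_g u\|^2$ is purely \emph{imaginary}. The quantity $\epsilon h\|\nabla_g u\|^2$ therefore appears in the imaginary part of $\langle(hP_\epsilon-z)u,u\rangle$, not in the real part. A second, related slip: $\Re\langle\tfrac{h}{i}Vu,u\rangle=h\Im\langle Vu,u\rangle$ is not $\mathcal O(h)\|u\|^2$ but only $\mathcal O(h)\|\nabla_g u\|\,\|u\|$, since $|Vu|\leq C|\nabla_g u|$; the zeroth-order divergence contribution $\tfrac12\Div V$ lives in the \emph{imaginary} part. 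Once these are placed correctly there is no circularity.

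Concretely, with $f:=(hP_\epsilon-z)u$ the real part gives
\[
\Re\langle f,u\rangle=h\Im\langle Vu,u\rangle-\Re z\,\|u\|^2,
\]
so $\Re z\geq\tfrac12$ and $|\langle Vu,u\rangle|\leq C\|\nabla_g u\|\,\|u\|$ yield $\|u\|\leq C\|f\|+Ch\|\nabla_g u\|$. The imaginary part gives
\[
\Im\langle f,u\rangle=h\langle Fu,u\rangle-\epsilon h\|\nabla_g u\|^2-\Im z\,\|u\|^2,\qquad F=\tfrac12\Div V,
\]
and since $\Im z\geq-\gamma h$ this yields $\epsilon h\|\nabla_g u\|^2\leq \|f\|\,\|u\|+(C+\gamma)h\|u\|^2$, hence $\|\nabla_g u\|\leq Ch^{-1}\epsilon^{-1/2}\|f\|+(C+\sqrt\gamma)\epsilon^{-1/2}\|u\|$. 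Substituting back,
\[
\|u\|\leq C\epsilon^{-1/2}\|f\|+Ch(C+\sqrt\gamma)\epsilon^{-1/2}\|u\|,
\]
and the hypothesis $\epsilon>K_0\gamma h^2$, $\gamma>1$, makes the last coefficient $<\tfrac12$ for $K_0$ large. This is the source of the $\epsilon^{-1/2}$; no frequency localization or semiclassical calculus is needed.
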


\noindent
{\bf Remark.} The lemma shows that for any fixed $ \epsilon $ the number 
of eigenvalues of $ P_\epsilon $ in $ \Im \lambda > - C $ is finite. In fact
the rescaling from $ z $ to $ \lambda $ shows that there are only 
finitely many eigenvalues of $ P_\epsilon $ in $\{ \Im \lambda > - \epsilon 
| \Re \lambda|^2 / C_0\} $, for some fixed $ C_0 $. This leads to an 
easy justification of the expansion \eqref{eq:t4}. 
We also see that a gap $ \Im z > - \gamma h $ for any $ \gamma $ is valid for 
$ \epsilon > C(\gamma) h^2 $. Hence in what follows we will assume that
$ \epsilon = \mathcal O ( h^2 ) $.

\begin{proof}
We fix the volume form on $X$ induced by the metric $g$, so that
the operator $\Delta_g$ is symmetric on $L^2(X)$. Take
$u\in H^2(X)$ and denote $f:=(hP_\epsilon-z)u$; then
$$
\langle f,u\rangle_{L^2}=\left\langle \left( \textstyle{\frac h i}V+ih\epsilon\Delta_g-z\right) u,u\right\rangle_{L^2}
=\textstyle {\frac h  i}\langle Vu,u\rangle_{L^2}-ih\epsilon\|\nabla_g u\|_{L^2}^2-z\|u\|_{L^2}^2.
$$
Taking the real part, we get
$$
\Re\langle f,u\rangle_{L^2}=h\Im\langle Vu,u\rangle_{L^2}- \Re z\|u\|_{L^2}^2.
$$
Since $\Re z\geq {1\over 2}$ and $V$ is a vector field, we find for some constant $C$
independent of $h,z,\epsilon$,
$$
\|u\|_{L^2}^2\leq C\|f\|_{L^2}\cdot\|u\|_{L^2}+Ch\|\nabla_g u\|_{L^2}\cdot \|u\|_{L^2},
$$
which implies
\begin{equation}
  \label{e:yippie1}
\|u\|_{L^2}\leq C\|f\|_{L^2}+Ch\|\nabla_g u\|_{L^2}.
\end{equation}
Now, taking the imaginary part, we get for $F:={1\over 2}\Div V\in C^\infty(X)$,
$$
\Im\langle f,u\rangle_{L^2}=h\langle Fu,u\rangle_{L^2}-h\epsilon\|\nabla_g u\|^2_{L^2}-(\Im z)\|u\|_{L^2}^2.
$$
Since $\Im z\geq -\gamma h$ and $F$ is a bounded function, we get
$$
h\epsilon\|\nabla_g u\|_{L^2}^2\leq C\|f\|_{L^2}\cdot \|u\|_{L^2}+(C+\gamma)h\|u\|_{L^2}^2,
$$
which implies
\begin{equation}
  \label{e:yippie2}
\|\nabla_g u\|_{L^2}\leq Ch^{-1}\epsilon^{-1/2}\|f\|_{L^2}+(C+\sqrt\gamma)\epsilon^{-1/2}\|u\|_{L^2}.
\end{equation}
Combining~\eqref{e:yippie1} and~\eqref{e:yippie2}, we get
$$
\|u\|_{L^2}\leq C\epsilon^{-1/2}\|f\|_{L^2}+Ch(C+\sqrt\gamma)\epsilon^{-1/2}\|u\|_{L^2}.
$$
For $K_0$ large enough and $\epsilon>K_0\gamma h^2$, $\gamma>1$, we have
$Ch(C+\sqrt\gamma)\epsilon^{-1/2}<{1\over 2}$, implying~\eqref{eq:ell2}.
\end{proof}

To prove Theorem~\ref{th:2} we follow~\cite{nz5}. We first
prove a result in which damping is introduced near the fiber infinity
in  $T ^* X$. For that we introduce a complex absorbing operator
\begin{equation}
\label{eq:defW0}   W_0:=   - f (-  h^2 \Delta_g  )  h^2 \Delta_g , \ 
\end{equation}
where $ f \in C^\infty ( \RR )$ satisfies the following conditions:
\begin{equation}
\label{eq:condf}  f \geq 0 ,  \ \  |f^{(k)}|  \leq C_k f^{1 - \alpha } ,  \ \ 
 f ( t) \equiv 0 \text{ for }t\leq C_0, \ \ 
 f(t) \equiv 1 \text{ for }t\geq 2C_0
\end{equation}
for some $ \alpha < \frac12 $ and some large constant $C_0$.
The technical condition on $ f^{(k) }$ is useful for 
comparing the propagators of $ \widehat P_\epsilon $ and $ P_0 $ -- 
see \cite[Appendix]{nz5}.

 With $ P_\epsilon $ given by \eqref{eq:Peps} we now
consider
\begin{equation} 
\label{eq:wideP} \widehat P_\epsilon := hP_\epsilon - i W_0 . 
\end{equation}
Unlike in \S\S \ref{s:mimoa},\ref{s:stPR} we will now work near a fixed rescaled
energy level $ z = h \lambda \sim 1 $ rather than near the zero energy.

The next result is an almost immediate application of \cite[Theorem 2]{nz5}:

\begin{lemm}
\label{l:nz5}
Suppose that the flow $ \varphi_t : X \to X $ is a {\em contact} Anosov flow
(see~\eqref{eq:contact}), 
$ \widehat P_\epsilon $ is given by \eqref{eq:wideP} and that
$ \epsilon = \mathcal O ( h^2 ) $. Let $\gamma_0  $ be 
the averaged Lyapounov exponent defined in \eqref{eq:gammd}. 
Then for any $ \delta  > 0 $ and $ s $ there exist $ h_0 $, $ c_0 $,
$ C_1 $,
such that for $ 0 < h < h_0 $,
\begin{equation}
\label{eq:nz5}
  \|  ( \widehat P_\epsilon - z )^{-1} \|_{L^2 \to L^2 }
\leq C_1 
  h^{-1  + c_0 \Im z / h } \log (1/h) , 
  \end{equation}
for 
\begin{equation}
  \label{e:nz5c}
  z \in [  \textstyle{{1\over 2}, {3\over 2}}] -
i h [ 0 , \gamma_0/ 2 - \delta ] \,.
\end{equation}
\end{lemm}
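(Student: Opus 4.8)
The plan is to rescale $\widehat P_\epsilon - z$ so that it fits the framework of \cite[Theorem~2]{nz5}, which gives resonance-gap estimates for a self-adjoint generator of a contact Anosov flow perturbed by a complex absorbing potential near the fiber infinity. First I would write $\widehat P_\epsilon - z = h P_0 + i\epsilon h \Delta_g - i W_0 - z$. The principal symbol at energy $\sim 1$ is $\xi(V_x) - i f(|\xi|_g^2)|\xi|_g^2 - z + O(h)$, with the damping term $f(-h^2\Delta_g)h^2\Delta_g$ having a sign that is compatible with forward propagation; the extra viscosity term $i\epsilon h\Delta_g$ is, by the hypothesis $\epsilon = \mathcal O(h^2)$, of relative size $\mathcal O(h)$ as an $h$-pseudodifferential operator of order $2$, hence a lower-order perturbation that does not affect the symbolic dynamics used in \cite{nz5}. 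The point is that the operator $h^{-1}\widehat P_\epsilon = P_0 + i\epsilon\Delta_g - ih^{-1}W_0$ is, up to the harmless term $i\epsilon\Delta_g = \mathcal O(h)_{\Psi^2_h}$, of exactly the form treated in \cite[Theorem~2]{nz5}, namely a real principal-type operator (the generator $P_0$) plus a complex absorbing term $-ih^{-1}W_0 = ih f(-h^2\Delta_g)\Delta_g$ supported near fiber infinity.

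The key steps, in order, are: (i) verify that the symbol $f(|\xi|_g^2)|\xi|_g^2$ of $h^{-2}W_0$ and its derivatives satisfy the technical estimates $|f^{(k)}| \le C_k f^{1-\alpha}$ required in \cite{nz5} (this is the content of~\eqref{eq:condf}, and $W_0 \in \Psi^2_h$ with the stated structure because of the symbol control recorded in~\eqref{e:symbian}); (ii) observe that the trapped set of the underlying Hamiltonian flow of $p = \xi(V_x)$ at energy $z \sim 1$ is $\{\xi \in E_0^*, p = \Re z\}$, a symplectic submanifold, and that the normal hyperbolicity together with the contact assumption gives the $r$-normally hyperbolic structure and the unstable-Jacobian bound with rate $\gamma_0$ from~\eqref{eq:gammd}; (iii) invoke \cite[Theorem~2]{nz5} to obtain, for the unperturbed operator $P_0 - ih^{-1}W_0 - \lambda$ with $\lambda = z/h$, the resolvent bound $\mathcal O(h^{-2+c_0\Im z/h}\log(1/h))$ as an operator $L^2 \to L^2$ on the range~\eqref{e:nz5c}; (iv) absorb the perturbation $i\epsilon\Delta_g$ by a Neumann-series / resolvent-identity argument, using that $\epsilon\Delta_g = \mathcal O(h)_{L^2 \to H^{-2}_h}$ and that the resolvent already controls two derivatives via ellipticity of $W_0$ at fiber infinity together with the real-principal-type propagation in the bounded-frequency region; (v) undo the rescaling $\lambda \mapsto z = h\lambda$, which converts the $h^{-2}$ in the bound to the claimed $h^{-1}$ (one power of $h$ comes from $d\lambda = h^{-1}dz$-type rescaling of the operator, i.e. from $\widehat P_\epsilon = h(P_0 + \cdots)$).

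The main obstacle I expect is step (iv): making precise that the viscosity term $i\epsilon\Delta_g$, which is \emph{elliptic} of order $2$ and hence not a small perturbation in any naive operator-norm sense, is nonetheless negligible at this energy scale. The resolution is that $\epsilon = \mathcal O(h^2)$ forces $\epsilon\Delta_g$ to be $\mathcal O(h)$ relative to $h^2\Delta_g$, so one should work with the semiclassically rescaled operator and compare $h^{-1}\widehat P_\epsilon$ with the operator of \cite{nz5}; the difference $i\epsilon\Delta_g = ih^2(\epsilon/h^2)\Delta_g$ has semiclassical symbol $\mathcal O(h)_{S^2}$, and since $W_0$ already supplies an elliptic parametrix at fiber infinity, $(h^{-1}\widehat P_\epsilon - \lambda)^{-1}$ maps $L^2 \to H^2_h$ with norm $\mathcal O(h^{-3+c_0\Im z/h}\log(1/h))$ after the rescaling, so multiplying by $\epsilon\Delta_g = \mathcal O(h^2)_{H^2_h \to L^2}$ gives a perturbation of operator norm $\mathcal O(h^{-1})\cdot\mathcal O(h^2) \cdot h^{c_0\Im z/h}\log(1/h) = o(1)$, which can be absorbed. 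A secondary technical point is that \cite{nz5} is stated for self-adjoint $P_0$, whereas here $P_0 = V/i$ is only formally self-adjoint up to the lower-order term $\frac12\Div V$ (cf. the computation in the proof of Lemma~\ref{l:ell1}); this term is order zero and is handled exactly as the other $\mathcal O(h)$ perturbations, or absorbed into the complex absorbing formalism, since it does not change the principal symbol or the dynamics.
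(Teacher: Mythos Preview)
Your overall strategy of invoking \cite[Theorem~2]{nz5} is correct, and steps (i)--(iii) are essentially the same as in the paper. However, step~(iv) contains a genuine gap, and the paper's proof avoids it by a much simpler device.

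The problem with your Neumann-series argument is twofold. First, the numerology is off: since $\|h^2\Delta_g u\|_{L^2}\leq C\|u\|_{H^2_h}$, the operator $\epsilon\Delta_g=(\epsilon/h^2)h^2\Delta_g$ is only $\mathcal O(1)_{H^2_h\to L^2}$ under the hypothesis $\epsilon=\mathcal O(h^2)$, not $\mathcal O(h^2)$ as you claim. Second, and more seriously, even with the correct bookkeeping the argument does not close in the whole strip~\eqref{e:nz5c}. By elliptic regularity at fiber infinity one gets $\|(\widehat P_0-z)^{-1}\|_{L^2\to H^2_h}\leq C h^{-1+c_0\Im z/h}\log(1/h)$, and the perturbation at the $\widehat P_\epsilon$-level is $i\epsilon h\Delta_g$, which is $\mathcal O(h)_{H^2_h\to L^2}$. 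The product is therefore of size $h^{c_0\Im z/h}\log(1/h)$, and for $\Im z$ near $-h(\gamma_0/2-\delta)$ this equals $h^{-c_0(\gamma_0/2-\delta)}\log(1/h)$, which is \emph{not} $o(1)$: the constant $c_0$ comes from the interpolation argument in~\cite{nz5} and you have no control forcing $c_0(\gamma_0/2-\delta)<1$. So the resolvent identity cannot be iterated.

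The paper bypasses this entirely. Rather than treating the viscosity as a perturbation, it writes
\[
\widehat P_\epsilon = hP_0 - iW,\qquad W:=W_0-(\epsilon/h)\,h^2\Delta_g,
\]
and observes that $W\geq 0$ (both summands are nonnegative operators) and that, since $\epsilon/h=\mathcal O(h)$, the extra term is $\mathcal O(h)_{\Psi^2_h}$ and hence $\sigma_h(W)=\sigma_h(W_0)=f(|\xi|_g^2)|\xi|_g^2$. Thus the pair $(hP_0,W)$ satisfies the hypotheses \cite[(1.9),(1.10)]{nz5} \emph{uniformly in $\epsilon$}, with the sole harmless modification that $hP_0\in\Psi^1_h$ rather than $\Psi^2_h$. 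One then applies \cite[Theorem~2]{nz5} directly to $\widehat P_\epsilon$ and reads off~\eqref{eq:nz5}; no perturbation step, and hence no rescaling gymnastics (your steps (iv)--(v)), is needed. The key point you missed is that the viscosity term has the \emph{same sign} as the absorbing potential $W_0$ and can simply be merged with it.
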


\noindent
{\bf Remark.} The bound \eqref{eq:nz5} is more precise than the bound
\eqref{eq:t2} which corresponds to $ \mathcal O ( h^{-N} ) $. It is obtained
by interpolation between the bound $ 1/\Im z $ for $ \Im z > 0 $ and
the polynomial bound $ \mathcal O  ( h^{-N} ) $ -- see \cite[Lemma 4.7]{Bu}, \cite[Lemma 2]{tz}. Using the fact that
$ H_{ s G } $ are complex interpolation spaces \cite{ca} the estimate
\eqref{eq:t2} can be refined to a form similar to \eqref{eq:nz5}.

\begin{proof} 
Put $ W = - (\epsilon/h) h^2 \Delta_g + W_0 $ (where $ W_0 $ appearing in the
definition of $ \widehat P_\epsilon $ is given by \eqref{eq:defW0}).
Note that $\widehat P_\epsilon=hP_0-iW$.
We have $ W \in \Psi_h^2 ( X ) $, $ W \geq 0 $, 
and since $ \epsilon = {\mathcal O} ( h ^2 ) $, 
\[ w:= \sigma_h ( W ) = \sigma_h ( W_0 ) = f ( |\xi|^2_g ) |\xi|_g^2 . \]
Hence $ P := h P_0 $ and $ W $ satisfy the assumptions \cite[(1.9),(1.10)]{nz5}.
The only difference is that $ P \in \Psi^1_h ( X ) $, so that in the 
notation of \cite{nz5}, $ k = 2 $ and $ m = 1$. Replacing $ k $ with 
 $m $ in the ellipticity condition \cite[(1.9)]{nz5} does not change
the proofs in \cite{nz5} (in particular it does not affect 
\cite[Proposition A.3]{nz5}): all the arguments are microlocal 
near the (compact) trapped set 
\begin{equation}
\label{eq:wideK}  \widehat K := \{ ( x, \xi ) : | p_0 ( x, \xi ) - 1 |< 1/2, \ \ 
\exp ( t H_{p} ) ( x, \xi ) \not \to \infty , t \to \pm \infty \}, 
\end{equation}
$ p ( x, \xi ) = \xi ( V_x ) $.

Since $ \varphi_t $ is a contact Anosov flow, 
the trapped set is normally hyperbolic in the sense of \cite[(1.14)--(1.17)]{nz5}
-- see \cite[\S 9]{nz5}. Hence we can apply \cite[Theorem~2]{nz5} and
obtain the bound \eqref{eq:nz5}.
\end{proof} 

We are now ready for
\begin{proof}[Proof of Theorem \ref{th:2}.]
We first note that \eqref{eq:t2} follows by rescaling $
z = h \lambda $  from a semiclassical 
estimate between the weighted spaces (we recall that 
$ H^s \subset H_{ s G( 1 ) } \subset H^{-s }$)
\begin{equation}
\label{eq:precise}   ( h P_\epsilon - z )^{-1} = \mathcal 
  O (  h^{-N} ) : H_{ s_0  G( 1 ) }  ( X ) 
  \to H_{s_0  G ( 1 )  } ( X )  . \end{equation}
for the same range of $ z $'s as in \eqref{e:nz5c}. 
By Lemma~\ref{l:ell1},
we can assume that $ \epsilon = \mathcal O ( h^2 ) $. 

To prove \eqref{eq:precise}
we follow the strategy as in \cite[\S 9]{nz5} 
combined with the estimates
of~\S\ref{s:mimoa}. For that we choose $ Q $ in \eqref{eq:defQ} 
and $ W_0 $ in \eqref{eq:defW0}, so that
for the weight 
$ G $ in \eqref{eq:weighted} and the trapped set $ \widehat K $ defined in \eqref{eq:wideK} we have 
\[ \WFh ( Q ) \cap \WFh ( G )=
 \widehat K \cap \WFh ( I - Q ) = \WFh ( Q ) \cap 
 \WFh ( W_0 ) = \emptyset .  \]
Since $ \widehat K $ is compact that is possible by modifying the 
conditions on $ \chi $ in \eqref{eq:defQ} and by increasing $ C_0 $ in 
\eqref{eq:condf}. 

To stay close to the notation of \cite[\S 9]{nz5} we now put 
$ P_\infty := h P_0 + i \epsilon h \Delta_g - i Q $.
To apply the gluing argument of Datchev--Vasy \cite{DaVa} as in 
\cite[\S 9]{nz5} we check that the conclusions of \cite[Lemma 9.19]{nz5}
are valid. First,
\begin{equation}
\label{eq:Pinfty1}   ( P_\infty - z )^{-1} = {\mathcal O} (h^{-N_1} ) 
: H_{s_0 G ( 1 ) } \to H_{s_0 G ( 1 ) } , \ \ \Im z > - \gamma_0/2  , \ \ 
| \Re z  - 1 | < 1/2 , 
\end{equation}
is proved similarly as \eqref{eq:Qeps}. Indeed, Lemma~\ref{l:ell} holds
for $\lambda=\mathcal O(\varepsilon^{-1/2})$,
and this condition is true since $\lambda=\mathcal O(h^{-1})$ and $\varepsilon=\mathcal O(h^2)$.
The proof of Lemma~\ref{l:rad} goes through as in the case $\lambda=\mathcal O(1)$. The proof
of~\eqref{eq:Qeps} works as before, using again that $\varepsilon=\mathcal O(h^2)$.

Next, we need the
propagation statement,
\[  \begin{split} & u = (P_\infty  - z)^{-1} f , \ \ \WFh(f)\cap\partial\overline T^*X=\emptyset  \
  \ \Longrightarrow  \\
& \ \ \ \ \ \ \ \  \WFh ( u ) \setminus (\WFh ( f )\cup \partial\overline T^*X) \subset \exp ( [ 0 , \infty 
) H_{p}  ) \left(
  \WFh ( f ) \cap p^{-1} ( \Re z  ) \right)  ,  
\end{split}
\]
where $ p ( x, \xi ) = \xi ( V_x ) $. This statement follows from the propagation 
theorems reviewed in the proof of Lemma~\ref{l:rad}; note that
$(hP_\epsilon-\widetilde P_\epsilon(0))u=\mathcal O(h^\infty)_{C^\infty}$
since $\epsilon=\mathcal O(h^2)$ and $\WFh(f)$ does not intersect the fiber infinity.

We can now follow the gluing argument of the resolvent
estimates on  $ ( P_\infty - z)^{-1} $
and $ ( \widehat P_\epsilon - z )^{-1} $ (given in \eqref{eq:nz5})
as in \cite[\S 9]{nz5}  to obtain \eqref{eq:precise}. In the notation 
of \cite[\S 9]{nz5} the parametrix for $ ( h P_\epsilon - z)^{-1} $
is given by 
\[  F ( z ) := A_1 ( \widehat P_\epsilon - z )^{-1} A_0 + 
B_1 ( P_\infty - z )^{-1} B_0 , \]
where $ A_j , I - B_j \in \Psi^{\rm{comp}}_h ( X ) $ are suitably 
chosen, with $ \WFh ( A_j ) \cap \WF ( G ) = \emptyset $. 
Away from the microsupport of $ G $, the spaces $ H_{sG ( h ) } $ are
microlocally equivalent to $ L^2 $. Hence
the $ L^2 $ estimates on  $ ( \widehat P_\epsilon - z)^{-1} $ imply the
$ H_{sG ( h ) } $ estimates on $ F ( z ) $. The gluing argument of 
\cite{DaVa} as recalled in \cite[\S 8]{nz5} concludes the proof of 
\eqref{eq:precise}.
\end{proof}

\def\arXiv#1{\href{http://arxiv.org/abs/#1}{arXiv:#1}}

\end{document}